\newcommand{\n}[1]{\left\|#1\right\|}
\newcommand{\br}[1]{\left(#1\right)}
\newcommand{\proj}{\operatorname{proj}}
\newcommand{\dist}{\operatorname{dist}}
\newcommand{\spn}{\operatorname{span}}
\theoremstyle{plain}
\theoremstyle{plain}
\title[\sf The Natural Greedy Algorithm]{The Natural Greedy Algorithm\\for reduced bases in Banach spaces}
\author[\sf A.~Dereventsov]{\sf Anton~Dereventsov$^\star$}
\address{$^\star$\sf Department of Computational and Applied Mathematics, Oak Ridge National Laboratory \\
Oak Ridge TN 37831-6164 ({\tt dereventsova@ornl.gov}).}
\author[\sf C.~G.~Webster]{\sf Clayton G.~Webster$^\dagger$}
\address{$^\dagger$\sf Department of Mathematics, University of Tennessee, Knoxville, TN 37996, and
Department of Computational and Applied Mathematics, Oak Ridge National Laboratory, Oak Ridge TN 37831-6164
({\tt cwebst13@math.utk.edu}).}
\date{\today}
\keywords{orthogonal greedy algorithm, proper orthogonal decomposition, empirical interpolation, high-dimensional approximations, reduced basis, parameterized equations, Banach spaces}
\begin{document}

\maketitle
\begin{abstract}
In this effort we introduce and analyze a novel reduced basis approach, used to construct an approximating subspace for a given set of data.
Our technique, which we call the Natural Greedy Algorithm (NGA), is based on a recursive approach for iteratively constructing such subspaces, and coincides with the standard, and the extensively studied, Orthogonal Greedy Algorithm (OGA) in a Hilbert space, defined in~\cite{buetal2012}.
However, for a given set of data in a general Banach space, the NGA is straightforward to implement and overcomes the explosion in computational effort introduced by the OGA, as we utilize an entirely new technique for projecting onto appropriate subspaces.
We provide a rigorous analysis of the NGA, and demonstrate that it's theoretical performance is similar to the OGA, while the realization of the former results in significant computational savings through a substantially improved numerical procedure.  Furthermore, we show that the empirical interpolation method (EIM) can be viewed as a special case of the NGA.
Finally, several numerical examples are used to illustrate the advantages of our NGA compared with 
other greedy algorithms and additional popular reduced bases methods, including EIM and proper orthogonal decomposition.

\end{abstract}

\section{Introduction}\label{section_introduction}
Reduced basis methods offer approaches of obtaining subspaces that approximate given sets of data.
This is often used in applications for reducing the dimensionality of a problem, and has become particularly popular recently as a way of obtaining solutions for parametric PDEs (see, e.g., \cite{mapatu2002b,mapatu2002a,pretal2002,veetal2003,roetal2007,se2008}).
For instance, it is generally unfeasible to employ a standard highly accurate numerical solution of a parametrized PDE for each parameter value, but one can obtain solutions for specifically selected values beforehand, and then use them to span a low-dimensional subspace in which the solution of the PDE can be approximated quickly (e.g., by employing a Galerkin procedure on the constructed subspace).
The rationale of this approach can be explained by C\'ea's lemma (see e.g.~\cite{ce1964}), which states that the solution obtained in this manner is quasi-optimal.

However, in this effort we consider a more general problem setting: let $\mathcal{F}$ be a set of elements of a Banach space $\mathcal{X}$ (e.g. a family of functions, a set of vectors, etc.) for which we need to construct an approximating subspace.
This set is specific for each problem setting and therefore usually cannot be approximated well by predetermined bases, including, e.g., wavelet, trigonometric, or polynomial bases in a case of Banach function spaces, or canonical disjoint bases in a case of discrete Banach spaces.
The idea of a reduced basis approach is to select key elements $f_0,\ldots,f_{n-1}$ of the set $\mathcal{F}$ and use them to construct an $n$-dimensional subspace $V_n = \spn\{f_0,\ldots,f_{n-1}\} \subset \mathcal{X}$ that approximates the set $\mathcal{F}$.
The objective of a reduced basis method is to provide a strategy for selecting these key elements $f_0,\ldots,f_{n-1}$.

One straightforward approach is to select the next basis element based on previously selected elements $f_0,\ldots,f_{n-1}$ by picking an element $f_n \in \mathcal{F}$ that is not approximated well by the current subspace $V_n = \spn\{f_0,\ldots,f_{n-1}\}$, i.e., to take
\[
	f_n = \mathop{\mathrm{argmax}}_{f \in \mathcal{F}} \dist(f,V_n).
\]
This method is known as the Orthogonal Greedy Algorithm (formally known as the Greedy Algorithm, however we add the adjective `Orthogonal' to avoid the confusion) and was formally stated and analyzed in the Hilbert space setting in~\cite{buetal2012}, where the first estimate on the rate of convergence was also provided.
This estimate was improved in~\cite{bietal2011} and then improved further and extended to the Banach space setting in~\cite{depewo2013}.
The most comprehensive technique for investigating the convergence of the Orthogonal Greedy Algorithm (OGA) was developed in~\cite{wo2015}, and the most recent improvement was given in~\cite{ng2018}, where the new estimate on the rate of convergence in a special case is shown.
For more details, see Section~\ref{section_ga}, where we discuss the OGA and the corresponding convergence results discussed above.

A key feature of the OGA is that it constructs a reduced basis with respect to the desired norm, thus specifically producing an approximation for the problem under consideration.
Nonetheless, on each iteration of the OGA it is required to compute the distance from an element $f \in \mathcal{F}$ to the previously constructed subspace $V_n = \spn\{f_0,\ldots,f_{n-1}\}$, which is equivalent to finding the orthogonal projection onto the subspace $V_n$ with respect to the fixed norm.
In a Hilbert space one generally approaches such a problem by orthogonalizing the vectors $\{f_k\}_{k=0}^{n-1}$ to obtain an orthogonal basis $\{f_k^\prime\}_{k=0}^{n-1}$ for $V_n$.
Then, the projections of each $f \in \mathcal{F}$ onto $V_n$ are easily found as $\sum_{k=0}^{n-1} \langle f,f_k^\prime \rangle f_k^\prime$.
However, in a Banach space this problem is difficult since there does not exist a straightforward procedure for calculating the orthogonal projection onto a subspace.
Thus for every $\mathcal{F}$ one has to solve an $n$-parameter minimization problem
\[
	\min_{\alpha_0,\ldots,\alpha_{n-1} \in \mathbb{R}} \n{f - \sum_{k=0}^{n-1} \alpha_k f_k},
\]
which, even though convex, becomes more complicated with each iteration and is not always feasible.

Another reduced basis algorithm that uses a recursive greedy strategy is the Empirical Interpolation Method (EIM), which was first introduced in~\cite{paetal2004} as an interpolation procedure for a family of functions $\mathcal{F} \subset L_\infty(\Omega)$.
This method constructs the basis elements $h_0,\ldots,h_{n-1} \in \mathcal{F}$ and the interpolation points $z_0,\ldots,z_{n-1} \in \Omega$ (in which the basis elements attain their maximal values) as $z_m = \operatorname{argmax}_{z \in \Omega} |h_m(z)|$.
At every iteration the EIM builds an approximation for each function $f \in \mathcal{F}$ by constructing linear combinations of the basis elements $h_0,\ldots,h_{n-1}$, which agrees with $f$ on points $z_0,\ldots,z_{n-1}$, i.e., $\sum_{k=0}^{n-1} \beta_k h_k$ such that
\[
	f(z_m) = \sum_{k=0}^{n-1} \beta_k h_k(z_m)
	\ \text{ for any }\  0 \le m \le n-1.
\]
A function $f_n \in \mathcal{F}$ that is not well approximated by such a linear combination is then selected, and the next interpolation point $z_n$ is chosen correspondingly, i.e.,
\[
	f_n = \mathop{\mathrm{argmax}}_{f\in\mathcal{F}} \n{f - \sum_{k=0}^{n-1} \beta_k h_k}_{L_\infty(\Omega)}
	\text{ and }\ 
	z_n = \mathop{\mathrm{argmax}}_{z\in\Omega} \left| f(z) - \sum_{k=0}^{n-1} \beta_k h_k(z) \right|,
\]
and the next basis element $h_n$ is constructed as the normalized remainder of the approximation of $f_n$, i.e.,
\[
	h_n = \frac{f_n - \sum_{k=0}^{n-1} \beta_k h_k}{f_n(z_n) - \sum_{k=0}^{n-1} \beta_k h_k(z_n)}.
\]
This procedure of obtaining a reduced basis is attractive from a computational perspective since in order to construct the approximation for an element $f \in \mathcal{F}$ one has to find the coefficients $\beta_0,\ldots,\beta_{n-1}$ by solving a linear system of $n$ equations, which is significantly simpler than solving an $n$-parameter optimization problem.
On the other hand, the simplicity of realization comes at the cost of freedom in the choice of norm: the reduced basis $\{h_k\}_{k=0}^{n-1}$ is designed to minimize $L_\infty(\Omega)$-norm and offers no flexibility in that matter.
For more details, see Section~\ref{section_other_rb}, where we briefly discuss the EIM and its relationship to the new approach we introduce in this paper.

In this paper we introduce a novel greedy algorithm for constructing reduced bases in Banach spaces, which does not require a predetermined norm, nor the solution of an optimization problem, and thus combines the flexibility of the OGA with the computational simplicity of the EIM.
Similar to the methods, described above, our algorithm constructs the reduced basis by employing a recursive greedy strategy: once basis elements $g_0,\ldots,g_{n-1}$ are constructed, they are used to approximate elements of the set $\mathcal{F}$ in order to choose the next element.
Similarly to the OGA, our method builds an approximation of an element $f \in \mathcal{F}$ by projecting $f$ onto the previously constructed subspace $V_n = \spn\{g_0,\ldots,g_{n-1}\}$; however it does not use the orthogonal projection but instead utilizes specifically constructed operators $\mathcal{R}_n : \mathcal{X} \to \mathcal{X}$, which will be introduced shortly.
In addition, similarly to the EIM, our way of obtaining an approximation $f \approx \sum_{k=0}^{n-1} \alpha_k g_k$ does not involve solving an optimization problem; instead we calculate the coefficients $\alpha_0,\ldots,\alpha_{n-1}$ by iteratively applying the norming functionals of the basis elements $g_0,\ldots,g_{n-1}$.
We call our method the Natural Greedy Algorithm (NGA) because our approach for approximating elements of the set $\mathcal{F}$ seems ``natural'' from several perspectives, as we will discuss below and detailed in Section~\ref{section_nga}.
In particular, the NGA does not just combine features of the OGA and the EIM, but in fact coincides with the former in $L_2$-spaces, and with the latter in $L_\infty$-spaces.
Thus the NGA can be thought of as an alternative generalization of the OGA from the Hilbert space setting to the Banach space setting.

\noindent
We propose here a new way of projecting onto a subspace of a Banach space by utilizing norming functionals.
While our method does not necessarily minimize the norm of the remainder, as the orthogonal projection does, it results in a significantly easier computationally approach since the norming functionals are known explicitly in many Banach spaces of interest (e.g., $L_p$-spaces), and in a general case can be easily computed as the derivative of the norm.
Namely, for given elements $g_0,\ldots,g_{n-1} \in \mathcal{X}$ we define operators $r_0,\ldots,r_{n-1}$ as $r_k(f) = f - F_{g_k}(f) \, g_k$, where $F_{g_k}$ is the norming functional of $g_k$.
These operators represent a remainder of a projection onto the one-dimensional subspace $\spn\{g_k\}$ and, thus, applying them consecutively results in an analogue of the remainder of the orthogonal projection onto the $n$-dimensional subspace $V_n = \spn\{g_0,\ldots,g_{n-1}\}$.
Furthermore, denote by $\mathcal{R}_n$ the combination of $r_{n-1},\ldots,r_0$, i.e.,
\[
	\mathcal{R}_n(f) = r_{n-1} \circ \ldots \circ r_0(f),
\]
then $\n{\mathcal{R}_n(f)}$ can be used as a simplified computational substitute for the distance from $f \in \mathcal{F}$ to the subspace $V_n = \spn\{g_0,\ldots,g_{n-1}\} \subset \mathcal{X}$.
We will see that such a method of projecting appears to be ``natural'' and possesses interesting analytical properties.
For instance, it is easy to see that the operator $\mathcal{R}_n(\cdot)$ is linear (while the orthogonal projector $\proj(\cdot,V_n)$ is not), and we will prove that $\mathcal{R}_n$ is an actual projector (i.e., $\mathcal{R}_n^2 = \mathcal{R}_n$).

Evidently, the quality of such a substitute depends critically on the choice of the vectors $g_0,\ldots,g_{n-1}$.
In fact, the value of $\mathcal{R}_n(f)$ is dictated only by the elements $g_0,\ldots,g_{n-1}$ and the geometry of the space in these points (which is, e.g., not the case for the orthogonal projection $\proj(f,V_n)$ as it is determined by the whole subspace $V_n$ rather than the individual elements).
Note also that if $\mathcal{X}$ is a Hilbert space and $\{g_k\}_{k=0}^{n-1}$ is an orthonormal system, then the value of $\n{\mathcal{R}_n(f)}$ is exactly the distance from $f$ to the subspace $V_n = \spn\{g_0,\ldots,g_{n-1}\}$; however in a general Banach space the value of the ratio $\n{\mathcal{R}_n(f)}/\dist(f,V_n)$ on a certain element $f \in \mathcal{X}$ can be as big as $2^n$, unless the vectors $g_0,\ldots,g_{n-1}$ are selected carefully.
In Section~\ref{section_nga} we describe the detailed process of obtaining appropriate elements $g_0,g_1,g_2,\ldots$ and 
formally introduce and analyze the NGA.
We state the convergence results corresponding to those of the OGA and note that every known estimate on the convergence rate for the OGA can also be stated for the NGA with an additional multiplicative constant.
Appendix~\ref{section_proofs} includes the details of the proofs of such theorems.

In addition to theoretical estimates, we compare the OGA and the NGA on a series of numerical examples and observe virtually no difference in the quality of the reduced bases generated by the two algorithms, while the computational complexity is significantly smaller for the NGA.
In Section~\ref{section_numerics} we present our numerical experiments for both greedy algorithms, compare them to the other known methods for constructing reduced bases, and discuss the computational complexity, which we measure in terms of the time each algorithm spends on the central processing unit.
Such a measurement is appropriate as it is directly related to a number of computations performed by each algorithm.

\section{The Orthogonal Greedy Algorithm}
\label{section_ga}

We begin this section by recalling standard notations that will be used throughout the paper.
Let $(\mathcal{X},\n{\cdot})$ be a Banach space.
For an element $f \in \mathcal{X}$ and a closed subspace $V \subset \mathcal{X}$ denote by $\proj(f,V)$ the orthogonal projection of $f$ onto $V$ and denote by $\dist(f,V)$ the distance from $f$ to $V$, i.e.,
\[
	\proj(f,V) = \mathop{\mathrm{argmin}}_{v \in V} \n{f - v}
	\text{ and }
	\dist(f,V) = \inf_{v \in V} \n{f - v} = \n{f - \proj(f,V)}.
\]

Let $\mathcal{F} \subset \mathcal{X}$ be a set of elements that we want to approximate by a low-dimensional subspace.
The following greedy algorithm (formally defined in~\cite{buetal2012}) constructs a sequence of nested subspaces $\{V_n\}_{n=0}^\infty$ that are designed to approximate the set $\mathcal{F}$.
\begin{definition}[{{\bf Orthogonal Greedy Algorithm}}]
For $\mathcal{F} \subset \mathcal{X}$ consider the following iterative procedure: 
\begin{enumerate}
\item[{\bf Step ${\bm 0}$.}]
    set $V_0 = \{0\}$ and find $f_0 = \operatorname{argmax}_{f\in\mathcal{F}} \dist(f,V_0)$;
    \smallskip
\item[{\bf Step ${\bm 1}$.}]
    set $V_1 = \spn\{f_0\}$ and find $f_1 = \operatorname{argmax}_{f\in\mathcal{F}} \dist(f,V_1)$;
    \smallskip
\item[{\bf Step ${\bm 2}$.}]
    set $V_2 = \spn\{f_0,f_1\}$ and find $f_2 = \operatorname{argmax}_{f\in\mathcal{F}} \dist(f,V_2)$;
\item[${\dots}$]
\item[{\bf Step ${\bm n}$.}]
    set $V_n = \spn\{f_0,\ldots,f_{n-1}\}$ and find $f_n = \operatorname{argmax}_{f\in\mathcal{F}} \dist(f,V_n)$.
\end{enumerate}
\end{definition}

\noindent
Note that generally sequences $\{f_n\}_{n=0}^\infty$ and $\{V_n\}_{n=0}^\infty$ are not unique and might not even exist due to the impossibility of finding an element $f_n \in \mathcal{F}$ that satisfies the greedy criterion.
The standard remedy to guarantee existence is the assumption of compactness of the set $\mathcal{F}$, which is widely used and is natural in PDE-based applications.
Under such a condition the process of greedy approximation is feasible and generally infinite (unless the set $\mathcal{F}$ is finite dimensional); in practice this procedure is stopped after the desired approximation accuracy has been achieved or the iteration limit has been reached.

Throughout the paper we will assume that $\mathcal{F}$ is a compact subset of $\mathcal{X}$, and thus at least one realization of the greedy algorithm is achievable; however uniqueness is still not guaranteed and, unless stated otherwise, variables $\{f_n\}_{n=0}^\infty$ and $\{V_n\}_{n=0}^\infty$ will denote any possible realization of the greedy algorithm.
Additionally, we assume for convenience that $\mathcal{F}$ is contained in the unit ball of $\mathcal{X}$, i.e., $\n{f} \le 1$ for any $f \in \mathcal{F}$.

Let us define for each $n \ge 0$ the number $\sigma_n = \sigma_n(\mathcal{F,X})$ that represent how well the set $\mathcal{F}$ is approximated by the subspace $V_n$:
\[
	\sigma_n = \sigma_n(\mathcal{F,X}) = \sup_{f \in \mathcal{F}} \dist(f,V_n).
\]
It is easy to see that the sequence $\{\sigma_n\}_{n=0}^\infty$ is monotone and that for any compact set $\mathcal{F} \subset \mathcal{X}$ one has $\sigma_n \to 0$ as $n \to \infty$, i.e., the OGA converges for any compact set $\mathcal{F} \subset \mathcal{X}$; however no direct estimates on the rate of convergence can be given without additional assumptions on $\mathcal{F}$.
Indeed, take any positive decreasing (to zero) sequence $\{a_n\}_{n=0}^\infty$ and consider the following compact set $\mathcal{F} \subset \ell_2$:
\[
	\mathcal{F} = \{a_n e_n\}_{n=0}^\infty,
\]
where $\{e_n\}_{n=0}^\infty$ is the canonical basis in $\ell_2$.
Then one achievable realization of the greedy algorithm is $f_n = a_n e_n$ for each $n \ge 0$, which implies $\sigma_n(\mathcal{F},\ell_2) = a_n$.

Nevertheless we can compare the approximation of the set $\mathcal{F}$ by the subspace $V_n$ with the best possible approximation by an $n$-dimensional subspace $X_n \subset \mathcal{X}$ given by the Kolmogorov $n$-width $d_n$:
\[
	d_n = d_n(\mathcal{F,X}) = \inf_{\substack{X_n \subset \mathcal{X} \\ \dim X_n=n}} \sup_{f \in \mathcal{F}} \dist(f,X_n).
\]
We note that while Kolmogorov $n$-widths have been introduced in 1935 (see~\cite{ko1936}) and extensively studied ever since, they still remain a mostly theoretical concept.
Namely, the width $d_n(\mathcal{F,X})$ might not be realizable on any subspace, and, more significantly, there are no constructive methods of obtaining an almost optimal subspace in a general setting.
Nonetheless, Kolmogorov widths are inherent in comparing the theoretical performance of approximating algorithms with the ideal, even if unattainable, case.
For a detailed discussion of $n$-widths and related concepts we refer the reader to the books~\cite{pi1985} and~\cite[Chapters~13--14]{logoma1996}.

\subsection{Convergence rate of the Orthogonal Greedy Algorithm}
We now return to estimating the convergence rate of the OGA.
The first direct comparison between $\sigma_n$ and $d_n$ in a Hilbert space is given in~\cite{buetal2012}, where it is shown (even though not stated explicitly) that
\[
	\sigma_n(\mathcal{F,H}) \le (n+1) \, 2^{n+1} d_n(\mathcal{F,H}).
\]
This estimate is improved in~\cite{bietal2011}, where it is shown that 
\[
	\sigma_n(\mathcal{F,H}) \le \frac{2^{n+1}}{\sqrt{3}} \, d_n(\mathcal{F,H})
\]
and that this estimate is sharp up to the factor of $2/\sqrt{3}$.

One technique that allows us to extend this result to a general Banach space is proposed in~\cite{wo2015}, where it is essentially shown that in a Banach space the above estimate holds with an additional factor (which depends on $\mathcal{X}$) that represents how isomorphic the subspaces of $\mathcal{X}$ are to the subspaces of $L_2$, which is characterized by the sequence $\{\gamma_n(\mathcal{X})\}_{n=1}^\infty$ defined below.

In particular, for Banach spaces $\mathcal{X}$ and $\mathcal{Y}$ denote by $GL(\mathcal{X,Y})$ the family of all linear isomorphisms from $\mathcal{X}$ to $\mathcal{Y}$.
Then the multiplicative Banach--Mazur distance between $\mathcal{X}$ and $\mathcal{Y}$ is given by
\[
	d(\mathcal{X,Y}) = \left\{\begin{array}{ll}
	\inf\{\|T\| \|T^{-1}\| : T \in GL(\mathcal{X,Y})\} & \text{if } \mathcal{X} \text{ is isomorphic to } \mathcal{Y},
	\\
	\infty & \text{if } \mathcal{X} \text{ is not isomorphic to } \mathcal{Y}.
	\end{array}\right.
\]
Denote by $\gamma_n(\mathcal{X})$ the supremum of multiplicative Banach--Mazur distances between $n$-dimensional subspaces of $\mathcal{X}$ and $\ell_2^n$, i.e.,
\[
	\gamma_n = \gamma_n(\mathcal{X}) = \sup\{d(V,\ell_2^n) : V \subset \mathcal{X} \text{ and } \dim V = n\}.
\]
It is easy to see that the sequence $\{\gamma_n(\mathcal{X})\}_{n=1}^\infty$ is non-decreasing and that $\gamma_1(\mathcal{X}) = 1$.
For any Hilbert space $\gamma_n(\mathcal{H}) = 1$ and for any Banach space $\gamma_n(\mathcal{X}) \le \sqrt{n}$ (it follows, for instance, from the Pietsch factorization theorem, see, e.g.,~\cite[Corollary~16.12.1]{ga2007}).
Moreover, it is known (see, e.g.,~\cite[III.B.9]{wo1991}) that in $L_p$-spaces 
\begin{equation}\label{gamma_n(L_p)}
	\gamma_n(L_p) \le n^{|\frac{1}{2}-\frac{1}{p}|} \text{ for any } 1 \le p \le \infty.
\end{equation}
Note that due to the compactness of the space of isometry classes of $n$-dimensional Banach spaces, for any $n$-dimensional subspace $Y_n \subset \mathcal{X}$ there exists an isomorphism $T \in GL(Y_n,\ell_2^n)$ with $\|T^{-1}\| = 1$ such that
\[
	\n{y} = \n{T^{-1}Ty} \le \n{Ty}_2 \le d(Y_n,\ell_2^n) \n{y} \le \gamma_n(\mathcal{X}) \n{y}.
\]
Thus, for any $Y \subset \mathcal{X}$ there exists a Euclidean norm $\n{\cdot}_e$ on $Y$ given by $\n{y}_e = \n{Ty}_2$ such that
\begin{equation}\label{e_norm}
	\n{y} \le \n{y}_e \le \gamma_{\dim Y}(\mathcal{X}) \n{y}
	\text{ for any } y \in Y_n.
\end{equation}

The direct comparison between $\sigma_n$ and $d_n$ for $L_p$-spaces is given in~\cite{wo2015}; however a similar technique can be used for a general Banach space $\mathcal{X}$ with minor changes in the proof.
Thus we state the generalized theorem and deduce the original result as a corollary (with a slight improvement in the multiplicative constant).
We demonstrate how to obtain the proof of this theorem in Appendix~\ref{section_proofs}.
\begin{theorem}\label{theorem_ga_direct}
For a compact set $\mathcal{F} \subset \mathcal{X}$ the Orthogonal Greedy Algorithm provides
\[
	\sigma_n(\mathcal{F,X}) \le \gamma_{2n+1}(\mathcal{X}) \, 2^{n+1} d_n(\mathcal{F,X}).
\]
\end{theorem}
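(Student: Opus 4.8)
The plan is to reduce the Banach-space estimate to the known Hilbert-space estimate $\sigma_n(\mathcal{G},\mathcal{H}) \le 2^{n+1}/\sqrt{3}\,\cdot d_n(\mathcal{G},\mathcal{H})$ from~\cite{bietal2011} by transporting the relevant finite-dimensional subspace of $\mathcal{X}$ to $\ell_2$ via the Euclidean norm supplied by~\eqref{e_norm}. The subtlety is choosing \emph{which} subspace to Euclideanize: the OGA on $\mathcal{F}$ produces elements $f_0,\ldots,f_{n-1}$ spanning $V_n$, and we also want to bring in a near-optimal $n$-dimensional subspace $X_n$ realizing (up to $\varepsilon$) the width $d_n(\mathcal{F},\mathcal{X})$. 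First I would fix $n$, fix such an $X_n$, and set $Y = \spn\{f_0,\ldots,f_{n-1}\} + X_n$, so that $\dim Y \le 2n$ — this is where the index $2n+1$ will enter (one extra dimension is needed below for the incoming greedy element). Actually, to be safe, at the $n$-th step one also examines candidate elements $f \in \mathcal{F}$ and compares $\dist(f,V_n)$ across them, so I would take $Y = \spn\{f_0,\ldots,f_{n-1}\} + X_n + \spn\{f_n\}$, giving $\dim Y \le 2n+1$.

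Next, apply~\eqref{e_norm} to $Y$: there is a Euclidean norm $\n{\cdot}_e$ on $Y$ with $\n{y} \le \n{y}_e \le \gamma_{\dim Y}(\mathcal{X})\,\n{y} \le \gamma_{2n+1}(\mathcal{X})\,\n{y}$ for all $y \in Y$. Let $\mathcal{H}$ be $Y$ equipped with $\n{\cdot}_e$, a genuine Hilbert space, and let $\mathcal{G} = \{f_0,\ldots,f_{n-1}, f_n\} \cap \mathcal{F} \subset \mathcal{H}$ — or more robustly the relevant finite collection of greedy iterates inside $Y$. The key point is that the greedy selection is \emph{norm-agnostic in structure}: I would argue that running the OGA with respect to $\n{\cdot}_e$ on this finite set produces the \emph{same} sequence of subspaces $V_0 \subset V_1 \subset \cdots \subset V_n$ as the original run did with $\n{\cdot}$, or at least that the $\n{\cdot}_e$-distances are sandwiched by the $\n{\cdot}$-distances up to the factor $\gamma_{2n+1}$. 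Concretely: for the greedy-selected $f_n$ one has $\dist_{\n{\cdot}}(f_n,V_n) \le \dist_{\n{\cdot}_e}(f_n,V_n)$ since $\n{\cdot}\le\n{\cdot}_e$, while the Hilbert-space bound controls $\dist_{\n{\cdot}_e}(f_n,V_n)$ in terms of $d_n(\mathcal{G},\mathcal{H})$, and finally $d_n(\mathcal{G},\mathcal{H}) \le d_n(\mathcal{F},\mathcal{X})$ up to the factor $\gamma_{2n+1}$ because $X_n \subset Y$ and $\n{\cdot}_e \le \gamma_{2n+1}\n{\cdot}$ on $Y$. Chaining these inequalities:
\[
	\dist(f_n,V_n) \le \dist_{\n{\cdot}_e}(f_n,V_n) \le \tfrac{2^{n+1}}{\sqrt{3}} \, d_n(\mathcal{G},\mathcal{H}) \le \tfrac{2^{n+1}}{\sqrt{3}} \, \gamma_{2n+1}(\mathcal{X}) \, d_n(\mathcal{F},\mathcal{X}),
\]
and since $\sigma_n(\mathcal{F},\mathcal{X}) = \sup_{f\in\mathcal{F}}\dist(f,V_n)$ is attained (by compactness) at the greedy element $f_n$, this gives $\sigma_n(\mathcal{F},\mathcal{X}) \le \gamma_{2n+1}(\mathcal{X})\,\tfrac{2^{n+1}}{\sqrt 3}\, d_n(\mathcal{F},\mathcal{X})$, which is even slightly stronger than stated; after an $\varepsilon\to 0$ limit on the choice of $X_n$ we recover the theorem (the $1/\sqrt 3$ can be absorbed to match the clean $2^{n+1}$ bound).

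The main obstacle I anticipate is the bookkeeping around the greedy run under the changed norm: the $\argmax$ defining $f_n$ is taken with respect to $\n{\cdot}$, not $\n{\cdot}_e$, so I cannot simply say ``the OGA in $\mathcal{H}$ picks the same elements.'' The honest route is not to rerun the algorithm but to treat the already-chosen $\{f_k\}$ as a fixed sequence and invoke the Hilbert-space analysis of~\cite{bietal2011} in the form that bounds $\dist_{\n{\cdot}_e}(f_n,\spn\{f_0,\ldots,f_{n-1}\})$ for \emph{any} sequence satisfying a near-greedy condition with respect to $\n{\cdot}_e$. So I would need the intermediate lemma: the original greedy choice is ``$\gamma_{2n+1}$-weakly greedy'' in the $\n{\cdot}_e$ norm, i.e. $\dist_{\n{\cdot}_e}(f_n,V_n) \ge \gamma_{2n+1}^{-1}\sup_{f\in\mathcal{F}\cap Y}\dist_{\n{\cdot}_e}(f,V_n)$ — but since we only need the chain of inequalities above and the $f_k$ for $k>n$ don't matter for bounding $\sigma_n$, a cleaner formulation may just quote the weak-greedy version of the Hilbert estimate directly. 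I would also double-check the exact constant lineage (whether $2^{n+1}$ or $2^{n+1}/\sqrt 3$ is the sharpest available) and confirm that $\gamma$ is indexed by $2n+1$ rather than $2n$ once the incoming element $f_n$ is accounted for; the rest is routine.
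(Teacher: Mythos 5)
Your reduction breaks down exactly at the displayed chain of inequalities, and the difficulty you flag in your last paragraph is not a bookkeeping issue but the place where a whole factor of $\gamma_{2n+1}$ is lost. The exact-greedy Hilbert bound $\dist_e(f_n,V_n)\le \tfrac{2^{n+1}}{\sqrt3}\,d_n(\mathcal{G},\mathcal{H})$ of~\cite{bietal2011} applies only if $f_0,\ldots,f_{n-1}$ were selected greedily with respect to $\n{\cdot}_e$; they were not, and the best you can guarantee is the weak-greedy property in $\n{\cdot}_e$ with weakness parameter $t=\gamma_{2n+1}^{-1}$, coming from $\dist_e(f_k,V_k)\ge\dist(f_k,V_k)\ge\dist(f_i,V_k)\ge\gamma_{2n+1}^{-1}\dist_e(f_i,V_k)$. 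The weak version of the Hilbert estimate costs an extra factor $t^{-1}=\gamma_{2n+1}$, and you must still pay a second factor $\gamma_{2n+1}$ in $d_n(\mathcal{G},\mathcal{H})\le\gamma_{2n+1}(1+\epsilon)\,d_n(\mathcal{F},\mathcal{X})$, since the width of $\mathcal{G}$ is measured in the larger norm $\n{\cdot}_e$. The honest conclusion of your route is therefore $\sigma_n(\mathcal{F},\mathcal{X})\le\gamma_{2n+1}^2\,\tfrac{2^{n+1}}{\sqrt3}\,d_n(\mathcal{F},\mathcal{X})$, which is strictly weaker than the theorem whenever $\gamma_{2n+1}>\sqrt3$ (e.g.\ in $L_1$ or $L_\infty$, where $\gamma_m$ grows like $\sqrt m$); your claim that the argument gives something ``slightly stronger than stated'' rests on silently applying the exact-greedy bound to a sequence that is only $\gamma_{2n+1}^{-1}$-weakly greedy in $\n{\cdot}_e$. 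The two distortions — of the selection rule and of the width — are charged to the same constant and do not merge in any black-box use of the Hilbert result.

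The paper avoids this double payment precisely by not using the Hilbert theorem as a black box: it redoes the triangular-matrix argument of~\cite{bietal2011} and~\cite{wo2015} inside $\mathcal{X}$. One builds Hahn--Banach functionals $\phi_k$ on $V_{n+1}$ with $\phi_k(f_k)=\sigma_k$ and $|\phi_k(\cdot)|\le\dist(\cdot,V_k)$ in the \emph{original} norm, shows via the biorthogonal system that the coefficients satisfy $|\beta_{ij}|\le2^{i-j-1}$ (whence the $2^{n+1}$), and only then invokes the Euclidean norm of~\eqref{e_norm} on $Y=\spn\{V_{n+1},X_n\}$ — for a single purpose: to produce a norm-one functional $\psi$ on $V_{n+1}$ vanishing on the Euclidean projection of $X_n$ and satisfying $|\psi(f_j)|\le\gamma_{2n+1}(1+\epsilon)\,d_n$. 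Thus $\gamma_{2n+1}$ enters exactly once. Your setup of $Y$ with $\dim Y\le 2n+1$, the inequality $\n{\cdot}\le\n{\cdot}_e\le\gamma_{2n+1}\n{\cdot}$, and the identification $\sigma_n=\dist(f_n,V_n)$ are all fine; but to recover the stated constant you would have to open up the Hilbert-space proof and show the two uses of the norm equivalence can be combined, which essentially amounts to reproducing the paper's argument.
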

\begin{corollary}
For a compact set $\mathcal{F} \subset L_p$ with any $1 \le p \le \infty$ the Orthogonal Greedy Algorithm provides
\[
	\sigma_n(\mathcal{F},L_p) \le 2\sqrt{3} \, n^{|\frac{1}{2}-\frac{1}{p}|} \, 2^n d_n(\mathcal{F},L_p).
\]
\end{corollary}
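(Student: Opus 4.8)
The plan is to derive this directly from Theorem~\ref{theorem_ga_direct} together with the estimate~\eqref{gamma_n(L_p)}. Specializing Theorem~\ref{theorem_ga_direct} to $\mathcal{X} = L_p$ gives
\[
	\sigma_n(\mathcal{F},L_p) \le \gamma_{2n+1}(L_p)\, 2^{n+1}\, d_n(\mathcal{F},L_p),
\]
and substituting $\gamma_{2n+1}(L_p) \le (2n+1)^{|\frac12 - \frac1p|}$ from~\eqref{gamma_n(L_p)} reduces the claim to the purely numerical inequality
\[
	(2n+1)^{|\frac12-\frac1p|}\, 2^{n+1} \le 2\sqrt{3}\, n^{|\frac12-\frac1p|}\, 2^n,
	\qquad\text{equivalently}\qquad
	\br{2 + \tfrac1n}^{|\frac12-\frac1p|} \le \sqrt{3}.
\]

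The remaining step is to verify this inequality for $n \ge 1$. Since $\frac1p \in [0,1]$ whenever $1 \le p \le \infty$, the exponent satisfies $|\frac12 - \frac1p| \le \frac12$, and for $n \ge 1$ the base satisfies $2 + \frac1n \le 3$. As $t \mapsto t^{|\frac12-\frac1p|}$ is non-decreasing on $[0,\infty)$, it follows that $\br{2 + \frac1n}^{|\frac12-\frac1p|} \le 3^{1/2} = \sqrt{3}$, which is exactly what is needed; the trivial bound $\sigma_0 \le 1$ handles the case $n = 0$ separately.

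I do not expect a genuine obstacle here — all the content sits in Theorem~\ref{theorem_ga_direct}, and the corollary is essentially a cosmetic repackaging. The only thing requiring care is the constant bookkeeping: tracking how the factor $2^{n+1}$ from the theorem, the passage from $\gamma_{2n+1}$ down to $n^{|\frac12-\frac1p|}$, and the clean target constant $2\sqrt{3}$ fit together, which is precisely what the inequality $\br{2+\frac1n}^{|\frac12-\frac1p|} \le \sqrt{3}$ records. One may also note in passing that this bound is never sharp for finite $n$ and that its worst behavior occurs as $p \to 1$ or $p \to \infty$, where the exponent attains $\frac12$, but this observation is not required for the proof.
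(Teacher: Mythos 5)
Your proposal is correct and follows exactly the route the paper intends: specialize Theorem~\ref{theorem_ga_direct} to $L_p$, insert the bound $\gamma_{2n+1}(L_p) \le (2n+1)^{|\frac12-\frac1p|}$ from~\eqref{gamma_n(L_p)}, and absorb the ratio via $\br{2+\tfrac1n}^{|\frac12-\frac1p|} \le 3^{1/2} = \sqrt{3}$ for $n \ge 1$, which is precisely the ``slight improvement in the multiplicative constant'' the paper alludes to. The only blemish is your parenthetical claim that $\sigma_0 \le 1$ handles $n=0$ (for $p \ne 2$ the right-hand side vanishes at $n=0$, so the statement is really meant for $n \ge 1$), but this does not affect the substance.
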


\noindent
An alternative approach to estimating the rate of convergence of the OGA is proposed in~\cite{depewo2013}, where a delayed estimate is proven, i.e., a comparison between $\sigma_n$ and $d_m$ for $m < n$.
In particular, the following estimate is proven in a Hilbert space setting,
\[
	\sigma_n(\mathcal{F,H}) \le \sqrt{2} \min_{0 < m < n} d_m^{1-m/n}(\mathcal{F,H}),
\]
and for a general Banach space,
\[
	\sigma_n(\mathcal{F,X}) \le \sqrt{2n} \min_{0 < m < n} d_m^{1-m/n}(\mathcal{F,X}).
\]
It is conjectured in~\cite{depewo2013} that the additional factor of $\sqrt{n}$ in the last estimate, while unable to be removed in general, can be improved for some Banach spaces; for instance, in the case of $L_p$-spaces one expects to replace $\sqrt{n}$ with $n^{|1/2-1/p|}$.
This conjecture is proven in~\cite{wo2015}, where the following result is stated.
\begin{theorem}\label{theorem_ga_indirect}
For a compact set $\mathcal{F} \subset \mathcal{X}$ the Orthogonal Greedy Algorithm provides
\[
	\sigma_n(\mathcal{F,X}) \le \sqrt{2} \min_{0 < m < n} \gamma_{n+m}(\mathcal{X}) \, d_m^{1-m/n}(\mathcal{F,X}).
\]
\end{theorem}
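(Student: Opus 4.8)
The plan is to reduce the inequality to the Hilbert space case by equipping an $(n+m)$-dimensional subspace with a Euclidean structure through~\eqref{e_norm}, and in that setting to bound the product $\sigma_0\sigma_1\cdots\sigma_{n-1}$ by a Gram determinant that a near-optimal $m$-dimensional subspace forces to be small; the exponent $1-m/n$ and the constant $\sqrt2$ then both come out after taking a $2n$-th root.

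I would first fix $m$ with $0<m<n$ and, for $\varepsilon>0$, choose an $m$-dimensional $X_m\subset\mathcal X$ with $\dist(f,X_m)<d_m+\varepsilon$ for every $f\in\mathcal F$. Put $Y=\spn\{f_0,\dots,f_{n-1}\}+X_m$, so that $\dim Y\le n+m$, and apply~\eqref{e_norm} to produce an inner product $\langle\cdot,\cdot\rangle_e$ on $Y$ whose norm $\n{\cdot}_e$ satisfies $\n{y}\le\n{y}_e\le\gamma_{n+m}(\mathcal X)\,\n{y}$ for all $y\in Y$. Since $\n{\cdot}\le\n{\cdot}_e$ on $Y$ and $f_k,V_k\subset Y$, we get $\sigma_k=\dist(f_k,V_k)\le\dist_{\n{\cdot}_e}(f_k,V_k)$ for $0\le k\le n-1$. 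Note that the $f_k$ need not be greedy with respect to $\n{\cdot}_e$, so I would \emph{not} measure $\sigma_n$ inside $Y$ (indeed $f_n$ need not belong to $Y$), but instead bound $\sigma_n$ using only the original-norm monotonicity $\sigma_n\le\sigma_{n-1}$.

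The core is the identity $\prod_{k=0}^{n-1}\dist_{\n{\cdot}_e}(f_k,V_k)^2=\det\mathbf M$, where $\mathbf M=\bigl(\langle f_i,f_j\rangle_e\bigr)_{i,j=0}^{n-1}$ is the $\langle\cdot,\cdot\rangle_e$-Gram matrix of $f_0,\dots,f_{n-1}$; combined with the previous paragraph and with $\sigma_{n-1}=\min_{0\le k\le n-1}\sigma_k$ this gives
\[
	\sigma_n^{2n}\ \le\ \sigma_{n-1}^{2n}\ \le\ \Bigl(\prod_{k=0}^{n-1}\sigma_k\Bigr)^{2}\ \le\ \det\mathbf M .
\]
To bound $\det\mathbf M$, decompose $f_k=P_ef_k+v_k$ with $P_e$ the $\langle\cdot,\cdot\rangle_e$-orthogonal projection onto $X_m$, so $v_k$ is $\langle\cdot,\cdot\rangle_e$-orthogonal to $X_m$ and the cross terms vanish: $\mathbf M=\mathbf A+\mathbf W$, where $\mathbf A$ is the Gram matrix of $\{P_ef_k\}\subset X_m$ (hence $\operatorname{rank}\mathbf A\le m$) and $\mathbf W$ is the Gram matrix of $\{v_k\}$ (positive semidefinite). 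Here $\operatorname{tr}\mathbf M=\sum_k\n{f_k}_e^2\le n\,\gamma_{n+m}(\mathcal X)^2$, and since $\n{v_k}_e=\dist_{\n{\cdot}_e}(f_k,X_m)\le\gamma_{n+m}(\mathcal X)\dist(f_k,X_m)<\gamma_{n+m}(\mathcal X)(d_m+\varepsilon)$, also $\operatorname{tr}\mathbf W\le n\,\gamma_{n+m}(\mathcal X)^2(d_m+\varepsilon)^2$. Because $\operatorname{rank}\mathbf A\le m$, Weyl's inequality gives $\lambda_{m+j}(\mathbf M)\le\lambda_j(\mathbf W)$ for $1\le j\le n-m$ (eigenvalues in decreasing order), so applying the arithmetic--geometric mean inequality to the $m$ largest eigenvalues of $\mathbf M$ and to the $n-m$ largest of $\mathbf W$,
\[
	\det\mathbf M=\prod_{i=1}^{n}\lambda_i(\mathbf M)\ \le\ \Bigl(\tfrac{\operatorname{tr}\mathbf M}{m}\Bigr)^{m}\Bigl(\tfrac{\operatorname{tr}\mathbf W}{n-m}\Bigr)^{n-m}\ \le\ \gamma_{n+m}(\mathcal X)^{2n}\Bigl(\tfrac{n}{m}\Bigr)^{m}\Bigl(\tfrac{n}{n-m}\Bigr)^{n-m}(d_m+\varepsilon)^{2(n-m)} .
\]
Taking $2n$-th roots, using that $\bigl(\tfrac nm\bigr)^{m/(2n)}\bigl(\tfrac n{n-m}\bigr)^{(n-m)/(2n)}=2^{H(m/n)/2}\le\sqrt2$ (with $H$ the binary entropy, so $H\le1$), and then letting $\varepsilon\to0$ and minimizing over $m$, yields $\sigma_n\le\sqrt2\,\min_{0<m<n}\gamma_{n+m}(\mathcal X)\,d_m^{\,1-m/n}$.

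The step I expect to require real care is the estimate for $\det\mathbf M$. The ``obvious'' route---Hadamard's inequality applied to the $m\times m$ and $(n-m)\times(n-m)$ principal minors, bounding the two eigenvalue products by $\binom nm\gamma_{n+m}(\mathcal X)^{2m}$ and $\binom n{n-m}\gamma_{n+m}(\mathcal X)^{2(n-m)}(d_m+\varepsilon)^{2(n-m)}$---only delivers a constant tending to $2$ after the $2n$-th root, because it squares the binomial loss. Obtaining the sharp $\sqrt2$ forces one instead to split the spectrum of $\mathbf M$ at index $m$, transfer the lower part onto $\mathbf W$ via the rank bound, and pass to traces through AM--GM, so that the loss is effectively shared between the two blocks; this is the only genuinely delicate point. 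Everything else is routine and closely parallels the proof of Theorem~\ref{theorem_ga_direct} in Appendix~\ref{section_proofs}: the Gram-determinant identity is the classical Gram--Schmidt computation, and the remaining work is keeping track of which norm each quantity is measured in.
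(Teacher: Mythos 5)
Your proposal is correct, and its overall architecture is exactly that of the paper's argument (given in Appendix~\ref{section_proofs} for Theorem~\ref{theorem_nga_indirect} and transferred to the OGA by replacing $\tau_i$ with $\sigma_i$): fix a near-optimal $X_m$, Euclideanize $Y=\spn\{V_n,X_m\}$ via~\eqref{e_norm} with the factor $\gamma_{n+m}$, use $\sigma_n^{2n}\le\prod_{k=0}^{n-1}\dist_e(f_k,V_k)^2$, and bound that product by a split of the form $\bigl(\tfrac{n}{m}\gamma_{n+m}^2\bigr)^m\bigl(\tfrac{n}{n-m}\gamma_{n+m}^2(1+\epsilon)^2d_m^2\bigr)^{n-m}$ together with $\tfrac{n^n}{m^m(n-m)^{n-m}}\le 2^n$. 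Where you genuinely diverge is in how the central product estimate is obtained: the paper Gram--Schmidt-orthonormalizes $\{f_j\}$ in $\n{\cdot}_e$, forms the lower-triangular coordinate matrix $A$ with $a_{ii}=\dist_e(f_i,V_i)$, and invokes Lemma~\ref{lemma_proj} (from~\cite{depewo2013}), whereas you work with the Gram matrix $\mathbf M=AA^{T}$, use $\det\mathbf M=\prod_k\dist_e(f_k,V_k)^2$, and prove the needed inequality from scratch through the decomposition $\mathbf M=\mathbf A+\mathbf W$ (rank of $\mathbf A$ at most $m$), Weyl's inequality $\lambda_{m+j}(\mathbf M)\le\lambda_j(\mathbf W)$, and AM--GM on the two eigenvalue blocks. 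Since $\det\mathbf M=\prod_i a_{ii}^2$, your spectral argument is in effect a self-contained proof of (the instance you need of) Lemma~\ref{lemma_proj}; it buys independence from the external lemma at the price of some eigenvalue machinery, while the paper's route is shorter by citation. Your closing remark slightly mischaracterizes the alternative: the ``obvious Hadamard'' route you dismiss is not what the paper does --- the delicate trace-splitting you identify is precisely what Lemma~\ref{lemma_proj} already encapsulates. All the individual steps you give (the monotonicity chain $\sigma_n^{2n}\le\prod_k\sigma_k^2$, the vanishing cross terms in the Gram decomposition, the trace bounds using $\n{f_k}\le 1$ and $\dist_e(f_k,X_m)\le\gamma_{n+m}\dist(f_k,X_m)$, and the entropy bound giving $\sqrt2$) check out.
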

\begin{corollary}
For a compact set $\mathcal{F} \subset L_p$ with any $1 \le p \le \infty$ the Orthogonal Greedy Algorithm provides
\[
	\sigma_n(\mathcal{F},L_p) \le \sqrt{2} \min_{0 < m < n} (n+m)^{|\frac{1}{2}-\frac{1}{p}|}  \, d_m^{1-m/n}(\mathcal{F},L_p).
\]
In particular,
\[
	\sigma_{2n}(\mathcal{F},L_p) \le \sqrt{6} \, n^{|\frac{1}{2}-\frac{1}{p}|} \sqrt{d_n(\mathcal{F},L_p)}.
\]
\end{corollary}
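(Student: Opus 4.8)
The plan is to obtain both displays as immediate specializations of Theorem~\ref{theorem_ga_indirect}, combined with the width estimate~\eqref{gamma_n(L_p)} for the isomorphism constants in $L_p$-spaces. There is essentially no new analytic content here; the work is bookkeeping of exponents and constants.

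For the first inequality I would set $\mathcal{X} = L_p$ in Theorem~\ref{theorem_ga_indirect}, obtaining
\[
	\sigma_n(\mathcal{F},L_p) \le \sqrt{2} \min_{0 < m < n} \gamma_{n+m}(L_p) \, d_m^{1-m/n}(\mathcal{F},L_p),
\]
and then replace each factor $\gamma_{n+m}(L_p)$ by the upper bound $(n+m)^{|\frac{1}{2}-\frac{1}{p}|}$ from~\eqref{gamma_n(L_p)}. Since this replacement only enlarges each term indexed by $m$, it passes through the minimum without change, which yields the claimed estimate.

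For the ``in particular'' statement I would apply the inequality just proven with $n$ replaced by $2n$, and then discard the minimum by selecting the single admissible index $m = n$ (legitimate since $0 < n < 2n$). With this choice the exponent $1 - m/(2n)$ equals $1/2$, so $d_m^{1-m/(2n)}$ becomes $\sqrt{d_n(\mathcal{F},L_p)}$, while the geometric factor is $(2n+n)^{|\frac{1}{2}-\frac{1}{p}|} = 3^{|\frac{1}{2}-\frac{1}{p}|}\, n^{|\frac{1}{2}-\frac{1}{p}|}$. It then remains only to absorb constants: the range $1 \le p \le \infty$ forces $|\frac{1}{2}-\frac{1}{p}| \le \frac{1}{2}$, hence $3^{|\frac{1}{2}-\frac{1}{p}|} \le \sqrt{3}$ and $\sqrt{2}\cdot 3^{|\frac{1}{2}-\frac{1}{p}|} \le \sqrt{6}$, giving exactly $\sigma_{2n}(\mathcal{F},L_p) \le \sqrt{6}\, n^{|\frac{1}{2}-\frac{1}{p}|}\sqrt{d_n(\mathcal{F},L_p)}$.

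If anything plays the role of a (very minor) ``obstacle'', it is just remembering that one is entitled to pick any valid $m$ in the minimum — so specializing to $m=n$ for the index $2n$ is free — and that the monotonicity of $t \mapsto 3^{t}$ on the interval $[0,\tfrac12]$ is precisely what lets the leading constant be tidied to $\sqrt{6}$.
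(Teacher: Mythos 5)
Your derivation is correct and is exactly the route the paper intends: the first display is Theorem~\ref{theorem_ga_indirect} specialized to $\mathcal{X}=L_p$ with the bound $\gamma_{n+m}(L_p)\le (n+m)^{|\frac12-\frac1p|}$ from~\eqref{gamma_n(L_p)}, and the second follows by taking the index $2n$, choosing $m=n$, and absorbing $3^{|\frac12-\frac1p|}\le\sqrt{3}$ into the constant. Nothing is missing; the constant bookkeeping checks out.
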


\noindent
Additional estimates can be given when $d_n$ satisfies a special decay rate.
For instance, if $d_n(\mathcal{F}) \le A \exp(-a n^\alpha)$ with some constants $0 < \alpha,a,A < \infty$, then it is shown in~\cite{bietal2011} that in a Hilbert space the OGA guarantees
\[
	\sigma_n(\mathcal{F,H}) \le B \exp(-b n^\beta),
\]
with $B = B(\alpha,a,A)$, $b = b(\alpha,a)$, and $\beta = \frac{\alpha}{\alpha+1}$.
This estimate is improved and extended to the Banach space setting in~\cite{depewo2013}, that is
\[
	\sigma_n(\mathcal{F,X}) \le B \exp(-b n^\alpha),
\]
with $B = B(A)$ and $b = b(\alpha,a)$.
Similarly, if $d_n(\mathcal{F}) \le A n^{-\alpha}$ with some constants $0 < \alpha,A < \infty$, then it is shown in~\cite{bietal2011} that in a Hilbert space the OGA satisfies
\[
	\sigma_n(\mathcal{F,H}) \le B n^{-\alpha},
\]
with $B = B(\alpha,A)$.
The corresponding estimate for a Banach space is proven in~\cite{depewo2013}:
\[
	\sigma_n(\mathcal{F,X}) \le B n^{-\alpha + 1/2 + \epsilon},
\]
with any $\epsilon > 0$ and $B = B(\epsilon,\alpha,A)$.
This estimate is improved in~\cite{wo2015}, where it is shown that if, in addition, one has a bound $\gamma_n(\mathcal{X}) \le C n^{\mu}$ with some $C > 0$ and $0 \le \mu \le \min\{\alpha,1/2\}$, then
\[
	\sigma_n(\mathcal{F,X}) \le B (\ln n)^\alpha \, n^{-\alpha + \mu},
\]
with $B = B(\alpha,\mu,A,C)$.
The most recent bound of this type is given in~\cite{ng2018}, where it is shown that the power of the logarithm can be replaced with $1/2$, which essentially results in the estimate
\[
	\sigma_n(\mathcal{F,X}) \le B (\ln n)^{\min\{\alpha,1/2\}} \, n^{-\alpha + \mu},
\]
with $B = B(\alpha,\mu,A,C)$.

\subsection{Comments on the Weak Orthogonal Greedy Algorithm}\label{remark_woga}
In general the OGA is computationally expensive since at the $n$-th iteration of the algorithm one has to solve an $n$-dimensional minimization problem for each element of the set $\mathcal{F}$.
While this is a manageable task in a Hilbert space, where one can orthogonolize the basis and compute the projection, in a Banach space this is a complicated problem, which increases its computational complexity exponentially with each consecutive iteration.

We conclude this section by commenting on two common ways to simplify the realization of the algorithm.
The first method, which is a classical approach in the greedy approximation theory (see, e.g.,~\cite{te2011}), is to consider a weak version of OGA, where instead of finding $f_n = \operatorname{argmax}_{f\in\mathcal{F}} \dist(f,V_n)$ it is sufficient to find any such $f_n^w \in \mathcal{F}$ that
\[
    \dist(f_n^w,V_n) \ge \gamma \sup_{f\in\mathcal{F}} \dist(f,V_n),
\]
where $\gamma \in (0,1]$ is a fixed constant which is generally referred to as the weakness parameter.
Second, it might be possible to find for each $n \ge 1$ an easy-to-compute surrogate $s_n : \mathcal{F} \to \mathbb{R}$ such that
\begin{equation}\label{ga_surrogate}
	c_s s_n(f) \le \dist(f,V_n) \le C_s s_n(f) \ \text{ for all } f \in \mathcal{F} \text{ and } n \ge 0,
\end{equation}
with some constants $0 < c_s \le C_s < \infty$.
In this case, instead of finding $f_n$ one can search for $f_n^s = \operatorname{argmax}_{f\in\mathcal{F}} s_n(f)$, which would immediately satisfy 
\[
	\dist(f_n^s,V_n) \ge \frac{c_s}{C_s} \sup_{f\in\mathcal{F}} \dist(f,V_n),
\]
which is essentially equivalent to the weak version of the OGA with $\gamma = c_s / C_s$.

The analysis of the Weak OGA is essentially the same as that of the OGA, and all the results stated in this section hold for the weak version as well with an additional factor of $\gamma^{-1}$.
We note that while the Weak OGA results in reduced computational complexity, it requires additional information on the problem either in terms of a sufficiently tight upper estimate of $\sup_{f\in\mathcal{F}} \dist(f,V_n)$, or in terms of suitable surrogates $s_1,s_2,s_3,\ldots$, neither of which is generally available, and thus the Weak OGA cannot be employed universally.


\section{The Natural Greedy Algorithm}\label{section_nga}
In this section we introduce the Natural Greedy Algorithm (NGA)~--- an alternative to the OGA, in which instead of maximizing the distance to the constructed subspace, one maximizes the norm of a specifically constructed and easily-computable operator.
The behavior of the NGA depends on the smoothness of the space, thus we recall the related concepts.

For a non-zero element $x \in \mathcal{X}$ a norming functional $F_x$ is an element of $\mathcal{X}^*$ such that 
\[
	\n{F_x}_{\mathcal{X}^*} = 1 \text{ and } F_x(x) = \n{x}.
\]
Existence of norming functionals in Banach spaces is a simple corollary of the Hahn--Banach theorem (see e.g.~\cite[Corollary~II.3]{be1982}) however their uniqueness is generally not guaranteed.
Banach spaces in which norming functionals are unique are the smooth spaces; in this case for any elements $x,y \in \mathcal{X}$ the value of $F_x(y)$ can be found by computing the G\^ateaux derivative of the norm (see, e.g.,~\cite[Chapter~I.1]{degozi1993}), i.e.,
\begin{equation}\label{norming_functional_smooth}
	F_x(y) = \left.\frac{d}{du} \n{x+uy}\right|_{u=0} = \lim_{u \to 0} \frac{\n{x+uy}-\n{x}}{u}.
\end{equation}
Moreover, norming functionals are known for Banach spaces that are commonly utilized in applications. 
In particular, for the following smooth spaces:
\begin{enumerate}
	\item \text{Hilbert space: }\ 
	$\displaystyle{F_x(y) = \frac{\langle x, y \rangle}{\n{x}}}$;
	\item \text{$\ell_p$-space, $1 < p < \infty$: }\ 
	$\displaystyle{F_x(y) = \frac{\sum_{n=1}^\infty \operatorname{sgn}(x_n) |x_n|^{p-1} y_n}{\n{x}_p^{p-1}}}$;
	\item \text{$L_p(\Omega,\mu)$-space, $1 < p < \infty$: }\ 
	$\displaystyle{F_x(y) = \int\limits_{\Omega} \frac{\operatorname{sgn}(x) |x|^{p-1} y}{\n{x}_p^{p-1}} \, d\mu}$.
\end{enumerate}
We remark that in cases $p = 1$ and $p = \infty$ the spaces $\ell_p$ and $L_p(\Omega,\mu)$ are not smooth and thus norming functionals are not unique.
Throughout the paper we will be using the following functionals in those settings:
\begin{align}
	\label{norming_functional_l1}
	F_x^{\ell_1}(y) &= \sum_{n=1}^\infty \operatorname{sgn}(x_n) \, y_n;
	\\
	\label{norming_functional_linf}
	F_x^{\ell_\infty}(y) &= \operatorname{sgn}(x_m) \, y_m,
	\text{ where } m \in \mathbb{N} : |x_m| = \n{x}_{\ell_\infty};
	\\
	\nonumber
	F_x^{L_1}(y) &= \int_\Omega \operatorname{sgn}(x) \, y \, d\mu;
	\\
	\nonumber
	F_x^{L_\infty}(y) &= \operatorname{sgn}(x(t)) \, y(t),
	\text{ where } t \in \Omega : |x(t)| = \n{x}_{L_\infty(\Omega)}.
\end{align}

The smoothness of a Banach space $\mathcal{X}$ is characterized by the modulus of smoothness $\rho(u)$, which is defined as
\begin{equation}\label{modulus_of_smoothness}
	\rho(u) = \rho(u,\mathcal{X}) = \sup_{\n{x}=\n{y}=1} \frac{\n{x+uy} + \n{x-uy}}{2} - 1.
\end{equation}
The modulus of smoothness is an even and convex function and, therefore, $\rho(u)$ is non-decreasing on $(0, \infty)$.
A Banach space is uniformly smooth if $\rho(u) = o(u)$ as $u \to 0$.
We say that the modulus of smoothness $\rho(u)$ is of power type $q \in [1;2]$ if $\rho(u) \leq c_\rho u^q$ with some constant $c_\rho = c_\rho(\mathcal{X}) > 0$ for all $u \ge 0$.
One easily concludes by using the triangle inequality that any Banach space has a modulus of smoothness of power type $1$ and that any Hilbert space has a modulus of smoothness of power type $2$ by applying the parallelogram law.
Additionally, it is known (see, e.g.,~\cite[p.~63]{litz1979}) that the moduli of smoothness $\rho_p(u)$ of $L_p$-spaces satisfy the asymptotic estimate
\[
	\rho_p(u) = \left\{
	\begin{array}{ll}
		\frac{1}{p} \, u^p + o(u^p), & 1 < p \leq 2
		\\
		\frac{p-1}{2} \, u^2 + o(u^2), & 2 \leq p < \infty
	\end{array}
	\right.,
\]
i.e., $L_p$-spaces have the modulus of smoothness of power type $q = \min\{p,2\}$.

\noindent
For a more detailed discussion on the smoothness of norms and their relation to the other geometrical aspects of Banach spaces we refer the reader to books~\cite{be1982} and~\cite{degozi1993}.

We now define the operator sequences which are foundational for the NGA.
For a given normalized sequence of elements $\{g_n\}_{n=0}^\infty \in \mathcal{X}$ denote
\begin{align}
	\nonumber
	r_n(f) &= f - F_{g_n}(f) \, g_n,
	\ \text{ and}
	\\
	\label{nga_R_n}
	\mathcal{R}_n(f) &= \left\{
	\begin{array}{ll}
		f, & n = 0,
		\\
		r_{n-1} \circ r_{n-2} \circ \ldots \circ r_1 \circ r_0 \, (f), & n \ge 1.
	\end{array}
	\right.
\end{align}
Essentially, operators $r_n$ and $\mathcal{R}_n$ represent the remainders of certain projections of an element $f \in \mathcal{X}$ onto $\spn\{g_n\}$ and $\spn\{g_0,\ldots,g_{n-1}\}$ respectively.
Indeed, if we view $F_{g_n}(\cdot) \, g_n$ as a projector onto $g_n$ then $r_n(\cdot)$ is the remainder of this projection, and $\mathcal{R}_n(\cdot)$ is the combination of $n$ remainders of such one-dimensional projections.
In fact, if $\{g_k\}_{k=0}^{n-1}$ is an orthonormal system in a Hilbert space $\mathcal{H}$, then for any $f \in \mathcal{H}$ we have $\mathcal{R}_n(f) = f - \proj(f,\spn\{g_0,\ldots,g_{n-1}\})$.
While it is not exactly the case in a general Banach space, with the right sequence $\{g_n\}_{n=0}^\infty$ the value of $\n{\mathcal{R}_n(f)}$ can be used in place of the distance from $f$ to $\spn\{g_0,\ldots,g_{n-1}\}$, as we show later.

This new concept of projecting in Banach spaces is the basis of the NGA; it is computationally advantageous since it only requires the knowledge of the norming functionals of the reduced basis (which can often be stated explicitly or easily calculated as~\eqref{norming_functional_smooth} in other cases), as opposed to the orthogonal projection, which requires the solution of an optimization problem.
Note that in general the operators $\{r_n\}_{n=0}^\infty \in \mathcal{X}^*$ (and hence $\{\mathcal{R}_n\}_{n=0}^\infty$) are not unique since the corresponding norming functionals might lack uniqueness.
In such cases, unless stated otherwise, we consider any suitable operator sequences.

The NGA defined below recurrently constructs sequences $\{f_n\}_{n=0}^\infty$ and $\{g_n\}_{n=0}^\infty$, that in turn define operators $\{\mathcal{R}_n\}_{n=0}^\infty$ and span nested subspaces $\{V_n\}_{n=0}^\infty$ which approximate the set $\mathcal{F}$.
\begin{definition}[{{\bf Natural Greedy Algorithm}}]
Given the definition of the operators in~\eqref{nga_R_n} and for $\mathcal{F} \subset X$ consider the following iterative procedure: 
\begin{enumerate}
\item[{\bf Step ${\bm 0}$.}]
    set $V_0 = \{0\}$, find $f_0 = \operatorname{argmax}_{f\in\mathcal{F}} \|\mathcal{R}_0(f)\|$
    \\
    \phantom{set $V_0 = \{0\}$, find $f_0 = \operatorname{argmax}_{f\in\mathcal{F}}$}
    and let $\displaystyle{g_0 = \frac{\mathcal{R}_0(f_0)}{\n{\mathcal{R}_0(f_0)}}}$;
    \smallskip
\item[{\bf Step ${\bm 1}$.}]
    set $V_1 = \spn\{f_0\}$, find $f_1 = \operatorname{argmax}_{f\in\mathcal{F}} \|\mathcal{R}_1(f)\|$
    \\
    \phantom{set $V_1 = \spn\{f_0\}$, find $f_1 = \operatorname{argmax}_{f\in\mathcal{F}}$}
    and let $\displaystyle{g_1 = \frac{\mathcal{R}_1(f_1)}{\n{\mathcal{R}_1(f_1)}}}$;
    \smallskip
\item[{\bf Step ${\bm 2}$.}]
    set $V_2 = \spn\{f_0,f_1\}$, find $f_2 = \operatorname{argmax}_{f\in\mathcal{F}} \|\mathcal{R}_2(f)\|$
    \\
    \phantom{set $V_2 = \spn\{f_0,f_1\}$, find $f_2 = \operatorname{argmax}_{f\in\mathcal{F}}$}
    and let $\displaystyle{g_2 = \frac{\mathcal{R}_2(f_2)}{\n{\mathcal{R}_2(f_2)}}}$;
    \smallskip
\item[${\dots}$]
\item[{\bf Step ${\bm n}$.}]
    set $V_n = \spn\{f_0,\ldots,f_{n-1}\}$, find $f_n = \operatorname{argmax}_{f\in\mathcal{F}} \|\mathcal{R}_n(f)\|$
    \\
    \phantom{set $V_n = \spn\{f_0,\ldots,f_{n-1}\}$, find $f_n = \operatorname{argmax}_{f\in\mathcal{F}}$}
    and let $\displaystyle{g_n = \frac{\mathcal{R}_n(f_n)}{\n{\mathcal{R}_n(f_n)}}}$.
\end{enumerate}
\end{definition}

\noindent
Similarly to the OGA, on the $n$-th iteration for each element of $\mathcal{F}$ we construct an approximation from $V_n = \spn\{f_0,\ldots,f_{n-1}\} = \spn\{g_0,\ldots,g_{n-1}\}$ in the form of a linear combination of already chosen elements.
The key difference is that in order to obtain the coefficient of this linear combination the NGA does not require solving an $n$-dimensional optimization problem; instead it calculates them directly by computing the values of norming functionals of the basis elements $\{g_k\}_{k=0}^{n-1}$.
While such technique of obtaining coefficients is generally not optimal in the sense of minimizing the norm of the remainder, it is inherent to the space as the the coefficients are dictated by the geometry of the norm on the elements of reduced basis.

\begin{remark}
Note that the NGA cannot proceed with the construction of reduced basis if at some step $n$ we have $\operatorname{argmax}_{f\in\mathcal{F}} \|\mathcal{R}_n(f)\| = 0$.
As we will see later, this situation occurs if and only if the set $\mathcal{F}$ is finite dimensional.
Moreover, in this case, just as for the OGA, we have $V_n = \spn\{g_0,\ldots,g_{n-1}\} \supset \mathcal{F}$ and $n = \dim\mathcal{F}$.
For convenience, from now on we assume that $\mathcal{F}$ is compact and infinite dimensional: compactness guarantees the possibility of choosing an element $f_n$, and infinite dimensions ensures the existence of a corresponding vector $g_n$.
Hence we presume that the NGA recursively constructs sequences $\{f_n\}_{n=0}^\infty$, $\{g_n\}_{n=0}^\infty$, $\{\mathcal{R}_n\}_{n=0}^\infty$ and, $\{V_n\}_{n=0}^\infty$.
Alternatively, if $\dim \mathcal{F} < \infty$ then the trivial modification in form of replacing $\infty$ with $\dim \mathcal{F}$ in sequence indexing needs to be performed.
\end{remark}

\subsection{Analysis of the operators $\mathcal{R}_n$}
To understand the primary concept behind the NGA we analyze the behavior of the operators $\{\mathcal{R}_n\}_{n=0}^\infty$, and show that our method of obtaining reduced bases, despite seeming unintuitive, is actually quite natural from several perspectives (hence the name of the algorithm).
In addition, we will see that in a Hilbert space the NGA coincides with the OGA.

We begin by establishing an important property of the operators $\{\mathcal{R}_n\}_{n=0}^\infty$ that will be our main tool in understanding the behavior of the constructed reduced basis $\{g_n\}_{n=0}^\infty$.
Specifically, we use induction to show that 
\begin{equation}\label{nga_F_k(R_m)=0}
	F_{g_k}(\mathcal{R}_m(f)) = 0
	\text{ for any }
	m > k \ge 0
	\text{ and any }
	f \in \mathcal{X}.
\end{equation}
Recall that from the definition~\eqref{nga_R_n} of operators $\{\mathcal{R}_n\}_{n=0}^\infty$ for any $m > k$ we have 
\[
	\mathcal{R}_m(f) = r_{m-1} \circ \ldots \circ r_0(f)
	= r_{m-1} \circ \ldots \circ r_k \circ \mathcal{R}_k(f).
\]
It is clear that the base of induction $m = k+1$ holds since $r_k(f) = f - F_{g_k}(f) \, g_k$ and hence
\[
	F_{g_k}(\mathcal{R}_{k+1}(f)) = F_{g_k} \big( r_k(\mathcal{R}_k(f)) \big) = 0.
\]
Assume that the hypothesis holds for some $m > k$, i.e. $F_{g_k}(\mathcal{R}_m(f)) = 0$ for any $f \in \mathcal{X}$.
Then, since by construction $g_m = \mathcal{R}_m(f_m) / \n{\mathcal{R}_m(f_m)}$, we deduce
\begin{align*}
	F_{g_k}(\mathcal{R}_{m+1}(f))
	&= F_{g_k} \big( r_m(\mathcal{R}_m(f)) \big)
	= F_{g_k} \Big( \mathcal{R}_m(f) - F_{g_m}(\mathcal{R}_m(f)) \, g_m \Big)
	\\
	&= F_{g_k} \big( \mathcal{R}_m(f) \big) - F_{g_m}(\mathcal{R}_m(f)) \, F_{g_k}\br{\frac{\mathcal{R}_m(f_m)}{\n{\mathcal{R}_m(f_m)}}}
	= 0,
\end{align*}
which concludes the proof.

Condition~\eqref{nga_F_k(R_m)=0} implies that $\mathcal{R}_n^2 = \mathcal{R}_n$, i.e., the operator $\mathcal{R}_n : \mathcal{X} \to \mathcal{X}$ is a projector.
Moreover, note that for any $m > k \ge 0$ we have $F_{g_k}(g_m) = 0$, and hence $r_k(g_m) = g_m$.
Therefore $\mathcal{R}_n(V_n) = 0$ since for any coefficients $\{\alpha_k\}_{k=0}^{n-1}$ we have
\begin{equation}\label{nga_R_n(V_n)=0}
	\mathcal{R}_n\br{\sum_{k=0}^{n-1} \alpha_k g_k}
	= r_{n-1} \circ \ldots \circ r_0 \br{\sum_{k=0}^{n-1} \alpha_k g_k}
	= r_{n-1} \circ \ldots \circ r_j \br{\sum_{k=j}^{n-1} \alpha_k g_k}
	= 0.
\end{equation}

\noindent
Thus, we have shown that in order to construct a reduced basis the NGA uses a linear projector $\mathcal{R}_n : \mathcal{X} \to \mathcal{X}$ with $\ker \mathcal{R}_n = V_n$.
In particular, it guarantees that the NGA will not, on any step, select a vector from the already generated subspace $V_n$, which implies that $\dim V_n = n$, i.e., the chosen elements $\{f_k\}_{k=0}^{n-1}$ are linearly independent.
Essentially, operator $\mathcal{R}_n$ decomposes any $f \in \mathcal{X}$ into the sum $f = \mathcal{R}_n(f) + \br{f - \mathcal{R}_n(f)}$, where $f - \mathcal{R}_n(f) \in V_n$, and $\mathcal{R}_n(f)$ represents the remainder of the projection of $f$ onto $V_n$ and determines which element of the set $\mathcal{F}$ will be chosen on this iteration of the algorithm.
From this point of view, the OGA uses the same approach as it relies on the orthogonal projector $\mathcal{P}_n(f) = f - \proj(f,V_n)$.
Although both algorithms make use of projectors ($\mathcal{R}_n$ and $\mathcal{P}_n$ respectively) which are independent of the translation about the current constructed subspace $V_n$, the main difference is that operator $\mathcal{P}_n$ is generally nonlinear (with the exception of the Hilbert space setting), while operator $\mathcal{R}_n$ is linear regardless of the choice of a space.

\subsection{Relations to orthogonality}
We now discuss the OGA and the NGA from the perspective of orthogonality of the reduced bases.
To begin, let us recall the concept of orthogonality in Banach spaces.
We consider here Birkhoff--James orthogonality, i.e. for any $x,y \in \mathcal{X}$ we say that $x$ is orthogonal to $y$ (denoted $x \perp y$) if $\n{x + \lambda y} \ge \n{x}$ for any $\lambda \in \mathbb{R}$.
This concept is directly related to the orthogonal projection onto subspaces of Banach spaces since $x - \proj(x,\mathcal{Y}) \perp \mathcal{Y}$ for any $x \in \mathcal{X}$ and any $\mathcal{Y} \subset \mathcal{X}$; however, in order to achieve orthogonality even for just two vectors $x$ and $y$, one generally has to solve the minimization problem $\min_{\lambda \in \mathbb{R}} \n{x + \lambda y}$ (which is how approximations are constructed in the OGA).

We propose here an alternative approach to projecting in Banach spaces, which is based on the relation between norming functionals and orthogonality.
First, it is easy to see that for any elements $x,y \in \mathcal{X} \setminus \{0\}$ and a norming funcitonal $F_x$ the condition $F_x(y) = 0$ implies $x \perp y$.
Indeed, for any $\lambda \in \mathbb{R}$ we have
\[
	\n{x + \lambda y} \ge F_x(x + \lambda y) = \n{x}.
\]
The inverse implication can be shown in uniformly smooth Banach spaces with the use of the well-known inequality (see e.g.~\cite[Lemma~6.1]{te2011}), which follows directly from the definition of modulus of smoothness~\eqref{modulus_of_smoothness}, namely,
\[
	\n{x - \lambda y} \le \n{x} - \lambda F_x(y) + 2\n{x} \rho\br{\frac{\lambda \n{y}}{\n{x}}}.
\]
Assume that $x \perp y$ but $F_x(y) \ne 0$.
Then for sufficiently small $\lambda > 0$ the previous inequality and the uniform smoothness of $\mathcal{X}$ provide
\[
	\n{x - \operatorname{sgn}(F_x(y)) \, \lambda y} \le \n{x} - \lambda |F_x(y)| + o(\lambda) < \n{x},
\]
which contradicts the orthogonality.

Together with these relations, property~\eqref{nga_F_k(R_m)=0} implies that the vectors $\{g_n\}_{n=0}^\infty$ constructed by the NGA form a semi-orthogonal basis for $V_n$ in the sense
\[
	g_k \perp g_m
	\text{ for any } m > k \ge 0.
\]
Similarly, if during the realization of the OGA one considers vectors $h_0,h_1,h_2,\ldots$ defined by $h_n = (f_n - \proj(f_n,V_n)) / \dist(f_n,V_n)$, then $\{h_k\}_{k=0}^\infty$ will also form a semi-orthogonal basis for $V_n$, but in the reverse order:
\[
	h_k \perp h_m
	\text{ for any } k > m \ge 0.
\]
Note that in a Hilbert space the orthogonality relation is symmetric, i.e., $x \perp y \iff y \perp x$, both algorithms produce the same sequence $\{f_n\}_{n=0}$ and $\{h_n\}_{n=0}^\infty = \{g_n\}_{n=0}^\infty$ is the orthogonalization of $\{f_n\}_{n=0}^\infty$.

Thus, when the NGA computes the remainder of the projection onto the subspace $V_n$, it subsequently computes $n$ remainders of one-dimensional projections on vectors $g_0,\ldots,g_{n-1}$ (which are given by $F_{g_k}(\cdot) \, g_k$).
This procedure seems appropriate since $\{g_k\}_{k=0}^{n-1}$ is a semi-orthogonal basis for $V_n$ and therefore in a Hilbert space we would get $\mathcal{R}_n(f) = f - \proj(f,V_n)$.
In a general Banach space, however, the concept of orthogonality is more complex and thus we only get $\{g_k\}_{k=0}^{n-1} \perp \mathcal{R}_n(f)$, while the remainder of the orthogonal projection provides $(f - \proj(f,V_n)) \perp \{g_k\}_{k=0}^{n-1}$.
Hence, such projection is not optimal in the sense of minimizing the norm of the remainder since it does not provide the condition $\mathcal{R}_n(f) \perp V_n$, and, theoretically, can even increase the norm of the projected element as Lemma~\ref{estimate_norm_R_n} below states; however calculating projection in this way is much simpler computationally as it does not involve solving a minimization problem.
Moreover, our numerical results in Section~\ref{section_numerics} show that, in practice, this imperfection in norm minimization does not deteriorate the quality of the constructed reduced basis, and the convergence rate of the NGA, despite its computational simplicity, is comparable to the one of the OGA.

We note that the realization of the Orthogonal Greedy Algorithm cannot be simplified along the same lines since for any $f \in \mathcal{X}$ one has to find $v_n = \proj(f,V_n)$ so that $(f - v_n) \perp V_n$, which, if $X$ is uniformly smooth, is equivalent to
\[
	F_{f - v_n}(V_n) = 0.
\]
To the best of our knowledge, there are no substantial theoretical findings on the relationship between an element $x \in \mathcal{X}$ and a norming functional $F_x \in \mathcal{X^*}$ (except in a Hilbert space setting), and therefore constructing the required vector $v_n \in V_n$ is only achievable by solving an $n$-parameter optimization problem.

\subsection{Convergence of the Natural Greedy Algorithm}\label{section_nga_convergence}
We now discuss the convergence properties of the NGA.
Similarly to the OGA we impose compactness on $\mathcal{F}$ to guarantee the feasibility of at least one realization of the NGA and existence of sequences $\{f_n\}_{n=0}^\infty$, $\{g_n\}_{n=0}^\infty$, and $\{V_n\}_{n=0}^\infty$.

In the same way we say that the algorithm converges if $\sigma_n = \sup_{f\in\mathcal{F}} \dist(f,V_n) \to 0$ as $n \to \infty$ for each possible realization of the algorithm.
Since it is difficult to measure $\{\sigma_n\}_{n=0}^\infty$ due to the fact that the NGA does not directly calculate the distance to the constructed subspace $V_n$, we introduce the sequence $\{\tau_n\}_{n=0}^\infty$ that represents the performance of the algorithm:
\[
	\tau_n = \tau_n(\mathcal{F,X}) = \sup_{f \in \mathcal{F}} \n{\mathcal{R}_n(f)}.
\]
These values are computed automatically during the realization of the NGA since by construction $\n{\mathcal{R}_n(f_n)} = \sup_{f \in \mathcal{F}} \n{\mathcal{R}_n(f)} = \tau_n$, and thus, no additional computations are required. 
Note that unlike $\{\sigma_n\}_{n=0}^\infty$, the sequence $\{\tau_n\}_{n=0}^\infty$ is not necessarily monotone, however, we will see later that for any compact set $\mathcal{F} \subset \mathcal{X}$ both sequences converge to zero, i.e., the NGA converges.
To show the exact connection between $\sigma_n(\mathcal{F,X})$ and $\tau_n(\mathcal{F,X})$, we first need to estimate the norm of $\mathcal{R}_n$, which depends on the geometry of the space $\mathcal{X}$ that is represented by the modulus of smoothness $\rho(u)$ defined by~\eqref{modulus_of_smoothness}.
\begin{lemma}\label{estimate_norm_R_n}
Let $\mathcal{X}$ be a Banach space, then for any $n \ge 0$ we have
\[
	\n{\mathcal{R}_n}_{\mathcal{X}^*} \le R^n,
\]
where
\[
	R = R(\mathcal{X}) = 1 + \mu,
	\text{ with }
	\mu \in (0,1]
	\text{ such that }
	1 + \mu = 2\mu \, \rho(\mu^{-1}).
\]
Additionally, if $\mathcal{X}$ has the modulus of smoothness of power type $q$, we get
\[
	R(\mathcal{X}) \le \min\{1 + \mu_q, 2\},
	\text{ with }
	\mu_q > 0
	\text{ such that }
	1 + \mu_q = 2c_\rho \, \mu_q^{1-q}.
\]
\end{lemma}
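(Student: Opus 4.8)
The plan is to estimate $\n{r_k}_{\mathcal{X}^*}$ first, since $\mathcal{R}_n = r_{n-1}\circ\cdots\circ r_0$ gives the submultiplicative bound $\n{\mathcal{R}_n} \le \prod_{k=0}^{n-1}\n{r_k} \le R^n$ once we show $\n{r_k} \le R$ for each $k$, uniformly in $k$. So the whole lemma reduces to a one-dimensional statement: for a unit vector $g \in \mathcal{X}$ and its norming functional $F_g$, bound the norm of the operator $r(f) = f - F_g(f)\, g$ on $\mathcal{X}$.

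First I would fix $f$ with $\n{f} = 1$ and write $t = F_g(f)$, noting $|t| \le \n{F_g}_{\mathcal{X}^*}\n{f} = 1$. Then $\n{r(f)} = \n{f - t g}$ and I want a bound of the form $\n{f - tg} \le R$. The case $|t|$ small is harmless (the triangle inequality already gives $\n{f - tg} \le 1 + |t| \le 2$), so the real content is extracting a better-than-$2$ constant using smoothness. The tool is the smoothness inequality already recorded in the excerpt (the one attributed to \cite[Lemma~6.1]{te2011}), applied with roles swapped: with $x = g$ (a unit vector) and an arbitrary unit vector $y$, the modulus of smoothness controls $\n{g + uy} + \n{g - uy}$. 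More directly, from the definition~\eqref{modulus_of_smoothness} of $\rho$ one has, for unit $x,y$, $\n{x + uy} \le 1 + 2\rho(u) + |u|\cdot 0$ type estimates after symmetrizing — I would instead argue via $\n{f - tg}$ by comparing with $\n{f + tg}$: since $F_g(f - tg) = 0$ we have $g \perp (f - tg)$, hence $\n{f - tg} \le \n{f - tg + \lambda g}$ for all $\lambda$; taking $\lambda = 2t$ gives $\n{f - tg} \le \n{f + tg}$, and averaging, $\n{f - tg} \le \tfrac12(\n{f+tg}+\n{f-tg})$. Now writing $f - tg$ and $f + tg$ as $\n{f-tg}\cdot(\text{unit}) \pm \text{(correction)}$ and invoking the definition of $\rho$ in the direction $g$, one gets $\n{f-tg} \le \n{f-tg}\big(1 + 2\rho(\,\cdot\,)\big)$ — the right normalization is to set $s = \n{f - tg}$, write the midpoint estimate as $s \le s\cdot\big(1 + 2\rho(2|t|/s)\big)$, i.e.\ $1 \le 1 + 2\rho(2|t|/s)$, which is vacuous; so the correct move is the reverse: apply the modulus inequality to the pair $g, (f-tg)/s$ to get $\n{g + u(f-tg)/s} + \n{g - u(f-tg)/s} \le 2(1 + \rho(u))$, then choose $u = s$ so that $g \pm (f - tg)$ appears, namely $g + (f - tg) = f + (1-t)g$ is not quite it either. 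The clean route, which I would write out carefully, is: set $s = \n{f - tg} = \n{\mathcal{R}_1(f)}$-type quantity, use $F_g((f-tg)) = 0$ together with the smoothness inequality to get $s = \n{(f - tg)} \ge \n{g} \cdot$ something; concretely $\n{g - u(f-tg)} \le 1 - u\cdot 0 + 2\rho(us)$ and also $\n{g + u(f-tg)} \le 1 + 2\rho(us)$, while $f = (f - tg) + tg$ gives $1 = \n{f} \le \n{f - tg} + |t| = s + |t|$, so $|t| \ge 1 - s$. Combined with $|t| \le 1$ and $s \le 1 + |t| \le 2$, plug into the smoothness estimate to derive $s \le 1 + |t|$ sharpened to $s \le 1 + \mu$ whenever $|t| \le \mu$, and handle $|t| > \mu$ separately — this is where the defining equation $1 + \mu = 2\mu\,\rho(\mu^{-1})$ enters, calibrating $\mu$ so the two regimes match. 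Working out this bookkeeping so that the crossover value is exactly the $\mu$ solving $1+\mu = 2\mu\rho(1/\mu)$ is the part requiring care.

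For the power-type refinement, I would substitute $\rho(u) \le c_\rho u^q$ into the defining relation for $\mu$: the equation $1 + \mu = 2\mu\,\rho(\mu^{-1})$ is replaced by the inequality $1 + \mu_q = 2c_\rho\,\mu_q^{1-q}$, whose solution $\mu_q$ dominates the true $\mu$ because $\rho(1/\mu) \le c_\rho\mu^{-q}$ makes the right-hand side larger; monotonicity of both sides in $\mu$ then yields $R = 1 + \mu \le 1 + \mu_q$. The clamp to $2$ is automatic since the crude triangle-inequality bound $\n{r(f)} \le \n{f} + |F_g(f)|\n{g} \le 2$ always holds, so $R \le \min\{1 + \mu_q, 2\}$.

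The main obstacle I anticipate is not any deep inequality but the precise regime-splitting in the one-dimensional estimate: showing that the worst case of $\n{f - F_g(f) g}$ over unit $f$ is governed by exactly the constant $\mu$ defined implicitly by $1 + \mu = 2\mu\rho(1/\mu)$, rather than some larger constant, requires choosing the free parameter $u$ in the smoothness inequality optimally as a function of $|F_g(f)|$ and then checking that this optimum is consistent across the range $|F_g(f)| \in [0,1]$. Existence of $\mu \in (0,1]$ solving that equation should follow from an intermediate-value argument using that $\rho$ is continuous, nondecreasing, convex, with $\rho(u)/u \to 0$ as $u \to 0^+$ and $\rho(1) \le 1$ (from the triangle inequality), so the function $\mu \mapsto 2\mu\rho(1/\mu) - (1+\mu)$ changes sign on $(0,1]$; I would include that brief verification.
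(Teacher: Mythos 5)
Your overall skeleton matches the paper's: bound the one--dimensional operator $r(f)=f-F_g(f)\,g$ uniformly, use submultiplicativity for $\mathcal{R}_n=r_{n-1}\circ\cdots\circ r_0$, split into two regimes in $|t|=|F_g(f)|$ with crossover calibrated by $1+\mu=2\mu\rho(\mu^{-1})$, and obtain the power-type refinement by substituting $\rho(u)\le c_\rho u^q$ (that last part, and the clamp at $2$, are fine). But the decisive estimate for the large-$|t|$ regime is never produced, and this is the entire content of the lemma: you need, for unit $f,g$ and $\alpha=F_g(f)\ne 0$, the bound $\n{f-\alpha g}\le 2|\alpha|\,\rho(1/|\alpha|)$, which together with the triangle-inequality bound $1+|\alpha|$ gives $\n{r(f)}\le 1+\mu$ at the intersection point. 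The paper gets it by applying the smoothness inequality $\n{x-y}\le\n{x}-F_x(y)+2\n{x}\rho(\n{y}/\n{x})$ with the specific choice $x=\alpha g$, $y=f$: since $F_{\alpha g}(f)=\operatorname{sgn}(\alpha)F_g(f)=|\alpha|=\n{\alpha g}$, the first two terms cancel exactly and only $2|\alpha|\rho(1/|\alpha|)$ survives. Your attempted applications use the pair $(g,\,f-tg)$ with $F_g(f-tg)=0$; these only yield statements of the form $\n{g\pm u(f-tg)}\le 1+2\rho(us)$, which bound the wrong quantity and carry no information about $s=\n{f-tg}$ itself (indeed, combining with the reverse triangle inequality gives $us\le 2+2\rho(us)$, which holds trivially because $\rho(v)\ge v-1$ for every Banach space) --- consistent with your own admission that one of these attempts is ``vacuous.'' You then defer exactly this step as ``bookkeeping requiring care,'' so as written the proposal has no better-than-$2$ bound when $|F_g(f)|>\mu$, and the claimed constant $R=1+\mu$ is not established.

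Two smaller points. First, your existence argument for $\mu$ invokes $\rho(u)/u\to 0$ as $u\to 0^+$, which holds only in uniformly smooth spaces and is irrelevant here anyway: in $2\mu\rho(1/\mu)$ with $\mu\in(0,1]$ the argument of $\rho$ is $\ge 1$; the sign change follows instead from $\rho(u)\ge u-1$ (so $2\mu\rho(1/\mu)\ge 2-2\mu>1+\mu$ for small $\mu$) together with $2\rho(1)\le 2=1+1$ at $\mu=1$. Second, the paper treats non-smooth spaces separately (there $\rho(u)=u$, $\mu=1$, and $R=2$ is just the triangle inequality), whereas your argument implicitly assumes a single well-defined $F_g$; this is easy to patch but should be said.
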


\noindent
The above bound is quite pessimistic since it estimates the norm of $\mathcal{R}_n$ on the whole space $\mathcal{X}$ and thus has to accommodate the worst-case scenario.
While this estimate is attainable in non-smooth Banach spaces, it is generally not sharp.
For instance, in a Hilbert space $\mathcal{H}$, our lemma provides the estimate $R(\mathcal{H}) = (\sqrt{5} + 1)/2$ since the modulus of smoothness of a Hilbert space is $\rho_\mathcal{H}(u) = u^2/2$, however, the actual norm of the operator $\mathcal{R}_n$ in this case is $1$.
Additionally, in Section~\ref{section_numerics} we estimate the norm of operators $\mathcal{R}_n$ on constructed subspaces for concrete numerical examples, and demonstrate that the attainable norm is drastically smaller than Lemma~\ref{estimate_norm_R_n} suggests.

Even though Lemma~\ref{estimate_norm_R_n} shows that in a general Banach space $\mathcal{X}$ the sequence of norms $\{\n{\mathcal{R}_n}_{\mathcal{X}^*}\}_{n=0}^\infty$ might be unbounded, we only operate on the set $\mathcal{F} \subset \mathcal{X}$, thus we need an alternative estimate for the norm of $\mathcal{R}_n$ that is more appropriate for the analysis of the NGA.
In particular, we will show that on a compact set $\mathcal{F}$ the values of $\n{\mathcal{R}_n(f)}$ are uniformly bounded for all $f \in \mathcal{F}$ and $n \ge 0$.

Indeed, let $\{g_n\}_{n=0}^\infty$ be a reduced basis generated by the NGA for the compact set $\mathcal{F}$, and $V$ be a subspace such that $V = \spn\{g_0,g_1,g_2,\ldots\} \subset \mathcal{X}$.
Denote by $C_g = C_g(\mathcal{F,X})$ the basis constant of $\{g_n\}_{n=0}^\infty$ (since it is a Schauder basis for $V$).
Then, similarly to~\eqref{nga_R_n(V_n)=0}, by using semi-orthogonality of the basis we obtain for any $h = \sum_{k=0}^\infty \alpha_k g_k \in V$ and any $n \ge 1$
\[
	\mathcal{R}_n(h) = \mathcal{R}_n \br{\sum_{k=0}^\infty \alpha_k g_k}
	= \sum_{k=n}^\infty \alpha_k g_k
	= h - \sum_{k=0}^{n-1} \alpha_k g_k,
\]
i.e. on the subspace $V$ the operator $\mathcal{R}_n$ acts as the remainder of the $n$-th basis projection of $\{g_n\}_{n=0}^\infty$.
Therefore 
\[
	\n{\mathcal{R}_n}_{V^*} \le C_g + 1,
\]
and by combining this estimate with Lemma~\ref{estimate_norm_R_n}, we arrive at
\begin{equation}\label{estimate_norm_R_n_V}
	\n{\mathcal{R}_n}_{V^*} \le B_n^g = B_n^g(\mathcal{F,X}) = \min\{R(\mathcal{X})^n, \, C_g(\mathcal{F,X}) + 1\}.
\end{equation}
Even though we estimate the norm of operator $\mathcal{R}_n$ only on the subspace $V$, we remark that for any $f \in \mathcal{F}$ and any $n \ge 0$ we have
\[
	\n{\mathcal{R}_n(f)} \le \n{\mathcal{R}_n(f_n)} 
	\le \n{\mathcal{R}_n}_{V_{n+1}^*} \n{f_n} 
	\le \n{\mathcal{R}_n}_{V^*} \le C_g + 1.
\]

We now can establish the relation between $\sigma_n(\mathcal{F,X})$ and $\tau_n(\mathcal{F,X})$ (which are defined as $\sup_{f \in \mathcal{F}} \dist(f,V_n)$ and $\sup_{f \in \mathcal{F}} \n{\mathcal{R}_n(f)}$ respectively).
Note that for the OGA the greedy selection step provides $\sigma_n = \dist(f_n,V_n)$, which is not necessarily the case for the NGA, where instead we have $\tau_n = \n{\mathcal{R}_n(f_n)}$.
Evidently for any $f \in \mathcal{F}$ we have
\[
	\dist(f,V_n) \le \n{\mathcal{R}_n(f)} \le \sup_{f \in \mathcal{F}} \n{\mathcal{R}_n(f)} = \n{\mathcal{R}_n(f_n)}.
\]
On the other hand, by using the property $\mathcal{R}_n(V_n) = 0$ and the estimate~\eqref{estimate_norm_R_n_V}, we obtain
\[
	\n{\mathcal{R}_n(f_n)} = \n{\mathcal{R}_n(f_n - \proj(f_n,V_n))} 
	\le \n{\mathcal{R}_n}_{V^*} \dist(f_n,V_n)
	\le B_n^g \sup_{f \in \mathcal{F}} \dist(f,V_n).
\]
Therefore for any $n \ge 0$ we arrive at the relation
\begin{equation}\label{estimate_sigma_tau}
	\sigma_n(\mathcal{F,X}) 
	\le \tau_n(\mathcal{F,X}) 
	\le B_n^g(\mathcal{F,X}) \, \sigma_n(\mathcal{F,X}).
\end{equation}
Similarly, we can establish the uniform equivalence (i.e., for all $n \ge 0$) of $\n{\mathcal{R}_n(\cdot)}$ and $\dist(\cdot,V_n)$ on the subspace $V \subset \mathcal{X}$, since for each $n \ge 0$ we have for any $f \in V$
\begin{equation}\label{estimate_dist_R_n_V}
	\dist(f,V_n) \le \n{\mathcal{R}_n(f)}
	\le \n{\mathcal{R}_n}_{V^*} \n{f - \proj(f,V_n)}
	\le (C_g + 1) \, \dist(f,V_n).
\end{equation}

We now have the necessary ingredients to discuss the convergence results of the NGA.
First, note that the uniform equivalence of $\dist(\cdot,V_n)$ and $\n{\mathcal{R}_n(\cdot)}$ on $V$ guarantees the convergence of the NGA for any compact set $\mathcal{F} \subset \mathcal{X}$.
Indeed, if for some $\epsilon > 0$ and all $n \ge 0$ we have $\sigma_n(\mathcal{F,X}) > \epsilon$, then relations~\eqref{estimate_sigma_tau} and~\eqref{estimate_dist_R_n_V} provide
\[
	\epsilon < \sigma_n \le \tau_n = \n{\mathcal{R}_n(f_n)}
	\le (C_g+1) \, \dist(f_n,V_n),
\]
i.e., for any $m > n \ge 0$ we have $\n{f_n - f_m} \ge \epsilon / (C_g+1)$, which contradicts the compactness of $\mathcal{F}$.

Furthermore, it might seem that the relation~\eqref{estimate_dist_R_n_V} implies that $\n{\mathcal{R}_n(\cdot)}$ is a surrogate for $\dist(\cdot,V_n)$ (see~\eqref{ga_surrogate}), i.e., that the NGA is a weak version of the Orthogonal Greedy Algorithm.
In Lemma~\ref{lemma_nga!=ga} below we refute this hypothesis by providing an example of a Banach space $\mathcal{X}$ and a compact set $\mathcal{F}$ such that $\n{\mathcal{R}_n(\cdot)}$ is not uniformly equivalent to $\dist(\cdot,V_n)$ on $\mathcal{F}$, thus proving that the Natural Greedy Algorithm is a distinct algorithm.
\begin{lemma}\label{lemma_nga!=ga}
	There exists a Banach space $\mathcal{X}$ and a compact set $\mathcal{F} \subset \mathcal{X}$ such that
	\[
		\lim_{n \to \infty} \sup_{f \in \mathcal{F}} \frac{\n{\mathcal{R}_n(f)}}{\dist(f,V_n)} = \infty.
	\]
\end{lemma}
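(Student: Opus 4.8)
The claim fails in a Hilbert space, where the NGA coincides with the OGA and $\n{\mathcal{R}_n(f)}=\dist(f,V_n)$ identically, so any witnessing example is genuinely Banach, and I would place it in $\mathcal{X}=L_\infty(\Omega)$ (taking the members of $\mathcal{F}$, hence all of the $g_k$, to be continuous, so that the functionals~\eqref{norming_functional_linf} are the evaluations at points of maximum), where the NGA is easiest to analyse. Indeed, a normalized $g_k$ with $|g_k(z_k)|=1$ makes $r_k(f)=f-F_{g_k}(f)\,g_k$ vanish at $z_k$, so $f-\mathcal{R}_n(f)\in V_n$ agrees with $f$ at $z_0,\dots,z_{n-1}$; that is, $\mathcal{R}_n=I-I_n$, where $I_n$ is the interpolation projector onto $V_n$ at the ``magic points'' $z_0,\dots,z_{n-1}$ --- the empirical interpolation remainder. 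The Lebesgue inequality then gives $\n{\mathcal{R}_n(f)}\le(1+\Lambda_n)\,\dist(f,V_n)$ with $\Lambda_n=\n{I_n}$, while $\n{I-I_n}\ge\Lambda_n-1$. So the fact to exploit is that for suitable compact families the Lebesgue constants $\Lambda_n$ of the empirical interpolation operators produced by the greedy process grow without bound (in the worst case as fast as $2^{n}-1$).

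The construction I would use assembles $\mathcal{F}$ from ``blocks'' $\mathcal{G}_1,\mathcal{G}_2,\dots$ supported on pairwise disjoint measurable subsets $\Omega_1,\Omega_2,\dots$ of $\Omega$, with $\mathcal{G}_d$ scaled by a small factor $\epsilon_d\downarrow 0$ so that $\mathcal{F}=\{0\}\cup\bigcup_d\mathcal{G}_d$ is a null sequence, hence compact. The block $\mathcal{G}_d$ contains a known bad family $\phi^{(d)}_0,\dots,\phi^{(d)}_{d-1}$ on $\Omega_d$ that forces the greedy rule to select these elements in order and to build a basis $h^{(d)}_0,\dots,h^{(d)}_{d-1}$ whose empirical interpolation operator $I^{(d)}$ on $\Omega_d$-supported functions has Lebesgue constant $\Lambda^{(d)}\to\infty$ as $d\to\infty$. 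Alongside these, $\mathcal{G}_d$ contains one extra ``witness'' $\psi^{(d)}=\epsilon'_d\,\eta_d$, where $\eta_d$ is a unit function supported in $\Omega_d$ with $\n{\eta_d-I^{(d)}\eta_d}\ge\Lambda^{(d)}-1$ (such $\eta_d$ exists since $\n{I-I^{(d)}}\ge\Lambda^{(d)}-1$) and $\epsilon'_d\ll\epsilon_d$ is chosen so tiny that the greedy rule never selects $\psi^{(d)}$. Writing $n_d$ for the step at which the NGA has just finished processing $\mathcal{G}_d$, the operator $\mathcal{R}_{n_d}$ reduces on $\Omega_d$-supported functions to $I-I^{(d)}$ (the interpolation conditions at the earlier blocks' magic points kill the earlier coefficients), so $\n{\mathcal{R}_{n_d}(\psi^{(d)})}=\epsilon'_d\n{\eta_d-I^{(d)}\eta_d}\ge\epsilon'_d(\Lambda^{(d)}-1)$ while $\dist(\psi^{(d)},V_{n_d})=\epsilon'_d\dist(\eta_d,V_{n_d})\le\epsilon'_d$; hence $\sup_{f\in\mathcal{F}}\n{\mathcal{R}_{n_d}(f)}/\dist(f,V_{n_d})\ge\Lambda^{(d)}-1\to\infty$, which is the lemma. (Any unbounded growth of $\Lambda^{(d)}$ would do.)

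The step I expect to be the real work is making the greedy selection behave as described. The disjoint-support structure is the main lever: for a function supported in a block not yet touched by the algorithm, every $F_{g_k}$ computed so far vanishes on it, so $\mathcal{R}_n$ acts there as the identity, and each block is processed as a self-contained empirical interpolation problem once the algorithm reaches it. One then has to fix the scalings in a fast-decreasing cascade --- recursively choosing $\epsilon_{d+1}$ and $\epsilon'_d$ below the (finitely many, strictly positive) values $\n{\mathcal{R}_n(\phi^{(d)}_k)}$ that arise while $\mathcal{G}_d$ is being processed, and below $\epsilon_d$ --- so that at every step the greedy maximizer lies in the block currently under consideration and equals the intended $\phi^{(d)}_k$, never the tiny $\psi^{(d)}$ and never a later block. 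A secondary nuisance is the non-uniqueness of the norming functionals~\eqref{norming_functional_linf} in $L_\infty$: I would arrange the $\phi^{(d)}_k$ so that the magic points $z_k$ are attained at single points of $\Omega_d$ (making each $F_{g_k}$ unambiguous at the relevant step), or else verify that the divergence persists for every admissible choice of operator sequence $\{\mathcal{R}_n\}$. With the order of selection and the functionals pinned down, the remaining ingredients --- the Lebesgue inequality, its lower counterpart, and the blockwise action of $\mathcal{R}_n$ --- are routine.
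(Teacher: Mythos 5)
Your architecture is essentially the paper's, transplanted from $\ell_1$ to $L_\infty$: the paper also builds a two-level example --- a single-block construction in $\ell_1$ in which the ratio reaches $(2(1-\epsilon))^{M}$ for arbitrarily large but finite $M$, and then a direct sum $\br{\bigoplus_m \ell_1}_1$ of disjointly supported, geometrically scaled copies with witnesses $\alpha_m f_*^m$, $\alpha_m \to 0$, so that no uniform equivalence constant can exist --- and your disjoint blocks, cascading scalings, and tiny never-selected witnesses play exactly the same roles. The reformulation through the NGA--EIM identification and Lebesgue constants is a legitimate and even illuminating repackaging, and the peripheral steps you list (the Lebesgue inequality, the lower bound $\n{I-I^{(d)}}\ge \Lambda^{(d)}-1$ with a near-norming witness, the blockwise action of $\mathcal{R}_n$ under disjoint supports, the recursive choice of scalings forcing the selection order, and the continuity device to make the functionals~\eqref{norming_functional_linf} well defined) are all sound and match the paper's handling of the corresponding issues.

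The genuine gap is at the engine of your argument: the ``known bad family'' $\phi^{(d)}_0,\dots,\phi^{(d)}_{d-1}$ whose \emph{greedily generated} interpolation operator has Lebesgue constant $\Lambda^{(d)}\to\infty$ is not an off-the-shelf fact you can cite; it is precisely the nontrivial content of the lemma. The EIM literature provides only the upper bound $\Lambda_n \le 2^n-1$, and exhibiting compact sets for which the greedy process actually produces ill-conditioned systems is delicate because the magic points $z_k$ are not free design parameters --- they are argmaxes of the greedy remainders --- so you must construct functions whose successive remainders peak where you want them and take values close to $\pm 1$ at the later points, making the unit lower-triangular matrix $\big(h_k(z_j)\big)$ generate coefficients of size about $2^{j}$. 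The paper does exactly this, explicitly, in $\ell_1$: the vectors~\eqref{lemma_nga!=ga_f_n} with off-diagonal entries $-\epsilon/2^{\,n+1-k}$ are tuned so that the induction~\eqref{lemma_nga!=ga_hypothesis} gives $\n{\mathcal{R}_m(f_*)} = \alpha\,(2(1-\epsilon))^m$ while $\dist(f_*,V_m)\le\n{f_*}$, together with a verification that the stated scalings really do force the intended selections. Until you carry out the analogous explicit construction in $C(\Omega_d)$ (or supply a precise reference), your proposal reduces the lemma to an unproved claim of the same nature, so it is a reduction, not yet a proof.
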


\noindent
Next, we estimate the rate of convergence of the NGA.
Similarly to the convergence results for the OGA, we provide the direct and delayed comparison between $\sigma_n(\mathcal{F,X})$ and $d_n(\mathcal{F,X})$ for the NGA.
\begin{theorem}\label{theorem_nga_direct}
For a compact set $\mathcal{F} \subset \mathcal{X}$ the Natural Greedy Algorithm provides
\[
	\sigma_n(\mathcal{F,X}) \le B_{n+1}^g(\mathcal{F,X}) \, \gamma_{2n+1}(\mathcal{X}) \, 2^{n+1} d_n(\mathcal{F,X}).
\]
\end{theorem}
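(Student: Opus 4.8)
The plan is to deduce this bound from the corresponding estimate for the Orthogonal Greedy Algorithm, Theorem~\ref{theorem_ga_direct}, by observing that, although the NGA maximizes $\n{\mathcal{R}_n(f)}$ rather than $\dist(f,V_n)$, these two quantities are comparable on the span of the generated basis, so the NGA sequence $\{f_n\}_{n=0}^\infty$ is a legitimate realization of the \emph{Weak} Orthogonal Greedy Algorithm with a controllable weakness parameter. One then invokes the weak version of Theorem~\ref{theorem_ga_direct}, which, as noted in Section~\ref{remark_woga}, holds with an extra factor of $\gamma^{-1}$.

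Concretely, first record the trivial inequality $\tau_n = \sup_{f\in\mathcal{F}}\n{\mathcal{R}_n(f)} \ge \sup_{f\in\mathcal{F}}\dist(f,V_n) = \sigma_n$, which is the left half of~\eqref{estimate_sigma_tau}. Next, since $\mathcal{R}_n$ annihilates $V_n$ by~\eqref{nga_R_n(V_n)=0}, and since the selected element satisfies $f_n \in V_{n+1} \subset V$ so that $f_n - \proj(f_n,V_n) \in V$, the greedy value at step $n$ obeys
\[
	\tau_n = \n{\mathcal{R}_n(f_n)} = \n{\mathcal{R}_n\bigl(f_n - \proj(f_n,V_n)\bigr)} \le \n{\mathcal{R}_n}_{V^*}\,\dist(f_n,V_n) \le B_n^g\,\dist(f_n,V_n),
\]
where the last step is the bound~\eqref{estimate_norm_R_n_V}. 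Combining the two inequalities and using that $B_k^g = \min\{R^k, C_g+1\}$ is non-decreasing in $k$ yields, for every $n \ge 0$,
\[
	\dist(f_n,V_n) \ge \frac{\tau_n}{B_n^g} \ge \frac{\sigma_n}{B_{n+1}^g}.
\]

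Thus the NGA sequence $\{f_n\}_{n=0}^\infty$ satisfies the weak-greedy selection criterion $\dist(f_n,V_n) \ge \gamma\,\sup_{f\in\mathcal{F}}\dist(f,V_n)$ with $\gamma = (B_{n+1}^g)^{-1}$. For the purpose of bounding $\sigma_n$ only the choices at steps $0,\dots,n$ are relevant, and at each such step $k \le n$ the attained weakness $(B_{k+1}^g)^{-1}$ is at least $(B_{n+1}^g)^{-1}$; hence this segment of the algorithm may be treated as a run of the Weak OGA with the single uniform parameter $\gamma = (B_{n+1}^g)^{-1}$. Applying the weak version of Theorem~\ref{theorem_ga_direct} then gives
\[
	\sigma_n(\mathcal{F,X}) \le \gamma^{-1}\,\gamma_{2n+1}(\mathcal{X})\,2^{n+1}d_n(\mathcal{F,X}) = B_{n+1}^g(\mathcal{F,X})\,\gamma_{2n+1}(\mathcal{X})\,2^{n+1}d_n(\mathcal{F,X}),
\]
as claimed. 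The delicate point is precisely this last reduction: one must justify that a greedy run whose weakness varies from step to step but stays $\ge \gamma$ on the relevant initial segment still enjoys the Weak-OGA estimate with parameter $\gamma$, so that one pays a single factor $B_{n+1}^g$ rather than a product of such constants over all iterations. Everything else is a direct adaptation of the argument for Theorem~\ref{theorem_ga_direct} carried out in Appendix~\ref{section_proofs}.
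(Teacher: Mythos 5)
Your preparatory steps are fine: the chain $\sigma_n \le \tau_n = \n{\mathcal{R}_n(f_n)} \le \n{\mathcal{R}_n}_{V^*}\,\dist(f_n,V_n) \le B^g_{n+1}\,\dist(f_n,V_n)$ is exactly how \eqref{estimate_sigma_tau} is derived, and the point you single out as delicate is actually harmless: a selection rule whose weakness varies from step to step but stays above $\gamma$ is, trivially, a realization of the Weak OGA with parameter $\gamma$. The genuine gap is the step you treat as citable, namely that the Weak OGA in a general Banach space satisfies Theorem~\ref{theorem_ga_direct} with a single extra factor $\gamma^{-1}$. That claim appears only as an unproved remark in Section~\ref{remark_woga}, and it does not come out of the paper's own proof technique with the constant you need. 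If you adapt the Hahn--Banach/triangular-matrix argument of Appendix~\ref{section_proofs} to a weak selection, the extensions $\phi_k$ must be dominated by $\dist(\cdot,V_k)$ (or $\gamma^{-1}\dist(\cdot,V_k)$); the diagonal entries of $\Phi$ are then only bounded below by $\gamma\sigma_k$ while the off-diagonal entries can still be of size $\sigma_k$, so the ratio driving the induction on $|\beta_{ij}|$ becomes $\gamma^{-1}$ instead of $1$, and one obtains $\sum_j|\beta_{ij}|\le(1+\gamma^{-1})^i$ in place of $2^i$. The weak estimate produced this way has the form $\sigma_n \le (1+\gamma^{-1})^{n+1}\,\gamma_{2n+1}(\mathcal{X})\,d_n$, and with your choice $\gamma^{-1}=B^g_{n+1}$ this gives roughly $(1+B^g_{n+1})^{n+1}\gamma_{2n+1}(\mathcal{X})\,d_n$ --- exponentially weaker than the asserted bound $B^g_{n+1}\,2^{n+1}\gamma_{2n+1}(\mathcal{X})\,d_n$. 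So the reduction, as stated, rests on a quantitative weak-OGA theorem that is nowhere established and is not delivered by the available machinery.

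This is precisely why the paper does not argue through the Weak OGA: its proof of Theorem~\ref{theorem_nga_direct} reruns the entire direct-estimate argument with the seminorms $\n{\mathcal{R}_k(\cdot)}$ in place of $\dist(\cdot,V_k)$. Because the NGA maximizes exactly the quantity that dominates the Hahn--Banach extensions, one has $\phi_k(f_k)=\tau_k$ and $|\phi_k(f_i)|\le\n{\mathcal{R}_k(f_i)}\le\tau_k$ for $i>k$, so the off-diagonal-to-diagonal ratio stays at most $1$, the bound $|\beta_{ij}|\le 2^{i-j-1}$ and hence the factor $2^{n+1}$ survive intact, and the loss $B^g_{n+1}$ is paid exactly once, through $\n{\phi_k}_{V_{n+1}^*}\le\n{\mathcal{R}_k}_{V_{n+1}^*}\le B^g_{n+1}$ via \eqref{estimate_norm_R_n_V}, together with $\sigma_n\le\sigma_i\le\tau_i$. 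To repair your argument you would either have to prove the weak-OGA direct estimate in Banach spaces with a single factor $\gamma^{-1}$ (a statement at least as strong as what is being proved here), or abandon the black-box reduction and exploit, as the paper does, the alignment between the maximized surrogate $\n{\mathcal{R}_k(\cdot)}$ and the seminorm dominating the functionals.
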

\begin{corollary}
For a compact set $\mathcal{F} \subset L_p$ with any $1 \le p \le \infty$ the Natural Greedy Algorithm provides
\[
	\sigma_n(\mathcal{F},L_p) \le 2\sqrt{3} B_{n+1}^g(\mathcal{F,X}) \, n^{|\frac{1}{2}-\frac{1}{p}|} \, 2^n d_n(\mathcal{F},L_p).
\]
\end{corollary}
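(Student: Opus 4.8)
The plan is to recognize the sequence produced by the NGA as a legitimate realization of the Weak Orthogonal Greedy Algorithm and then to invoke the Weak-OGA form of Theorem~\ref{theorem_ga_direct}. Recall (as stated after the definition of the Weak OGA) that every convergence estimate for the OGA, and Theorem~\ref{theorem_ga_direct} in particular, remains valid for an arbitrary realization of the Weak OGA with weakness parameter $\gamma\in(0,1]$ at the cost of an extra multiplicative factor $\gamma^{-1}$. So it suffices to exhibit, for each fixed $n$, a constant $\gamma$ such that the NGA's choices $f_0,\ldots,f_{n}$ (and the associated subspaces $V_0,\ldots,V_{n+1}$) satisfy the weak-greedy criterion with parameter $\gamma$, and then the bound follows with $B_{n+1}^g$ playing the role of $\gamma^{-1}$.

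First I would extract the weak-greedy inequality from material already in hand. Recall from~\eqref{estimate_sigma_tau} that $\sigma_n(\mathcal{F,X})\le\tau_n(\mathcal{F,X})=\n{\mathcal{R}_n(f_n)}$, since $f-\mathcal{R}_n(f)\in V_n$ gives $\dist(f,V_n)\le\n{\mathcal{R}_n(f)}$ for every $f$ and the NGA step maximizes $\n{\mathcal{R}_n(\cdot)}$ over $\mathcal{F}$. On the other hand, the chain of inequalities that proves the upper bound $\tau_n\le B_n^g\sigma_n$ in~\eqref{estimate_sigma_tau} actually establishes the pointwise estimate $\n{\mathcal{R}_n(f_n)}=\n{\mathcal{R}_n\!\br{f_n-\proj(f_n,V_n)}}\le\n{\mathcal{R}_n}_{V^*}\,\dist(f_n,V_n)\le B_n^g\,\dist(f_n,V_n)$, where we use $\mathcal{R}_n(V_n)=0$, the membership $f_n-\proj(f_n,V_n)\in V$, and~\eqref{estimate_norm_R_n_V}. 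Combining the two gives
\[
	\dist(f_n,V_n)\ \ge\ \frac{1}{B_n^g}\,\n{\mathcal{R}_n(f_n)}\ \ge\ \frac{1}{B_n^g}\,\sigma_n(\mathcal{F,X}),
\]
i.e. the NGA's $n$-th selection is a weak-greedy choice with weakness $1/B_n^g$.

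Next, since $\{B_k^g\}_{k\ge0}$ is non-decreasing (being $\min\{R(\mathcal{X})^k,\,C_g+1\}$), the single constant $\gamma=1/B_{n+1}^g$ is a valid weakness parameter simultaneously for all steps $k\le n+1$; hence the NGA output is, up to step $n+1$, a realization of the Weak OGA with this $\gamma$. Feeding $\gamma^{-1}=B_{n+1}^g$ into the Weak-OGA form of Theorem~\ref{theorem_ga_direct} yields $\sigma_n(\mathcal{F,X})\le B_{n+1}^g(\mathcal{F,X})\,\gamma_{2n+1}(\mathcal{X})\,2^{n+1}d_n(\mathcal{F,X})$, which is the claim. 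The corollary then follows verbatim from the $L_p$ corollary of Theorem~\ref{theorem_ga_direct}: one substitutes $\gamma_{2n+1}(L_p)\le(2n+1)^{|\frac12-\frac1p|}\le(3n)^{|\frac12-\frac1p|}$ and absorbs the resulting numerical constant into $2\sqrt3$ exactly as in the OGA case.

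The main obstacle I anticipate is the bookkeeping around the weakness parameter, since the Weak OGA is defined with a single fixed $\gamma$ while the inequality above only supplies the step-dependent constants $1/B_k^g$. Making the reduction airtight requires either (i) checking that the Appendix proof of Theorem~\ref{theorem_ga_direct} invokes the greedy criterion only at steps $0,\ldots,n$, so that the single worst value $B_n^g\le B_{n+1}^g$ suffices, or (ii) re-running that proof while carrying the factor $B_k^g$ at step $k$ and bounding the product by $B_{n+1}^g$ at the end; the mild inflation from $B_n^g$ to $B_{n+1}^g$ is harmless slack (and is presumably what the precise accounting in the Appendix produces). A secondary, genuinely minor point, already dispatched above, is confirming $f_n-\proj(f_n,V_n)\in V$ so that the operator-norm bound~\eqref{estimate_norm_R_n_V} applies; this follows from $f_n=\n{\mathcal{R}_n(f_n)}\,g_n+\br{f_n-\mathcal{R}_n(f_n)}$ with $f_n-\mathcal{R}_n(f_n)\in V_n$, whence $f_n\in V_{n+1}\subset V$. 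Everything else is a direct quotation of the OGA theory.
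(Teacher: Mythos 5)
Your reduction of the NGA to a weak greedy scheme is itself sound: the inequality $\dist(f_n,V_n)\ge\br{B_n^g}^{-1}\n{\mathcal{R}_n(f_n)}\ge\br{B_n^g}^{-1}\sigma_n$ follows exactly as in the derivation of~\eqref{estimate_sigma_tau}, and your check that $f_n\in V_{n+1}\subset V$ so that~\eqref{estimate_norm_R_n_V} applies is correct, as is the final $L_p$ substitution $\gamma_{2n+1}(L_p)\le(2n+1)^{|\frac12-\frac1p|}\le\sqrt{3}\,n^{|\frac12-\frac1p|}$. The genuine gap is the step you lean on most heavily: a ``Weak-OGA form of Theorem~\ref{theorem_ga_direct}'' with an extra factor of exactly $\gamma^{-1}$ is nowhere proven in the paper --- it is a passing remark in Section~\ref{remark_woga} --- and your fallback option (ii) for substantiating it does not work as described. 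If you re-run the appendix argument treating the selections only as weak-greedy with respect to the true distance, the Hahn--Banach functionals must be dominated by $\dist(\cdot,V_k)$, so in the triangular matrix $\Phi$ the ratio of the sub-diagonal entries to the diagonal entry $\dist(f_k,V_k)$ is bounded only by $\gamma^{-1}=B^g$; the induction then yields $|\beta_{ij}|\le B^g(1+B^g)^{i-j-1}$ and the final factor becomes of order $(1+B_{n+1}^g)^{n+1}$ rather than $B_{n+1}^g\,2^{n+1}$. In other words, the weakness constant compounds through the $\beta_{ij}$ induction instead of appearing once at the end, and your option (i) (checking which steps the greedy criterion is invoked at) does not touch this issue.

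This is precisely why the paper does not argue via the Weak OGA but proves Theorem~\ref{theorem_nga_direct} directly: the functionals $\varphi_k$, $\phi_k$ are built to be dominated by $\n{\mathcal{R}_k(\cdot)}$, the NGA's own surrogate, with respect to which the NGA is \emph{exactly} greedy. Then the matrix $\Phi$ has diagonal $\tau_k$ and sub-diagonal entries bounded by $\tau_k$ (ratio $1$), the induction gives the clean bound $\sum_j|\beta_{ij}|\le 2^i$, and the constant $B_{n+1}^g$ enters only once, through $\n{\phi_k}_{V_{n+1}^*}\le\n{\mathcal{R}_k}_{V_{n+1}^*}\le B_{n+1}^g$ and $\sigma_n\le\tau_i$. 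Your reduction to a weak-greedy statement discards exactly this structure, so as written the proposal does not establish the bound $B_{n+1}^g\,\gamma_{2n+1}\,2^{n+1}d_n$ (and hence not the corollary) unless you independently prove the $\gamma^{-1}$-version of Theorem~\ref{theorem_ga_direct}, which is a nontrivial claim for this direct estimate even though it is immediate for the Hadamard-based delayed estimates. The fix is to follow the paper's route: prove the NGA theorem directly with the surrogate-dominated functionals and then specialize to $L_p$.
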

\begin{theorem}\label{theorem_nga_indirect}
For a compact set $\mathcal{F} \subset \mathcal{X}$ the Natural Greedy Algorithm provides
\[
	\sigma_n(\mathcal{F,X}) \le \sqrt{2} B^g_{n/2}(\mathcal{F,X}) \min_{0 < m < n} \gamma_{n+m}(\mathcal{X}) \, d_m^{1-m/n}(\mathcal{F,X}).
\]
\end{theorem}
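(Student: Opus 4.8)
The plan is to reduce the NGA estimate to the corresponding OGA estimate (Theorem~\ref{theorem_ga_indirect}) using the relation~\eqref{estimate_sigma_tau}, exactly in the spirit of how Theorem~\ref{theorem_nga_direct} presumably follows from Theorem~\ref{theorem_ga_direct}. The point is that the NGA's greedy selection criterion is driven by $\tau_n$ rather than $\sigma_n$, so one should first run the OGA-style argument on the quantity actually being maximized and then convert back. Concretely, I would start from the observation that the NGA, by construction, selects $f_n$ so that $\n{\mathcal{R}_n(f_n)} = \tau_n = \sup_{f\in\mathcal{F}}\n{\mathcal{R}_n(f)}$; since $\dist(f,V_n)\le\n{\mathcal{R}_n(f)}$ for all $f$, the NGA is at least as aggressive as a weak OGA in a suitable sense, which is what lets the OGA machinery apply.

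First I would re-examine the proof of Theorem~\ref{theorem_ga_indirect} (deferred to Appendix~\ref{section_proofs}, but assumable here) and isolate where the greedy maximality of $\dist(f_n,V_n)$ is used. In the delayed/indirect argument one compares, over a block of iterations $m<k\le n$, the residuals $\dist(f_k,V_k)$ against $d_m$, exploiting that each new $f_k$ is maximally far from $V_k$. For the NGA the analogous quantity we control by greediness is $\n{\mathcal{R}_k(f_k)}$, and by~\eqref{estimate_sigma_tau} we have the two-sided bound $\sigma_k\le\tau_k\le B_k^g\,\sigma_k$. So the scheme is: (i) run the OGA indirect argument with $\tau_k$ playing the role of the residual, obtaining a bound of the form $\tau_n\le\sqrt{2}\min_{0<m<n}\gamma_{n+m}(\mathcal{X})\,d_m^{1-m/n}$ up to the $B$-factors incurred when passing between $\tau$ and $\sigma$ inside the telescoping/product estimate; (ii) then use $\sigma_n\le\tau_n$ to conclude. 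The reason the factor is $B^g_{n/2}$ rather than $B^g_n$ is presumably that the indirect argument only needs the residual equivalence over the \emph{second half} of the iteration block (iterations roughly $n/2$ through $n$), where $B^g_k\le B^g_{n/2}$ is the worst case, by monotonicity of $B_n^g$ in $n$ when the $R(\mathcal{X})^n$ branch is active, and boundedness by $C_g+1$ otherwise — I would need to check that $B_n^g$ is indeed monotone, or at least that $B_k^g\le B_{n/2}^g$ can fail and one instead just bounds each $B_k^g$ by $B_n^g$ and absorbs; the cleaner route is likely that the delayed estimate compares iteration $n$ against the subspace $V_m$ at a single earlier time $m$, and the $\mathcal{R}$-to-$\dist$ conversion is only invoked once, at the level where $\n{\mathcal{R}_n(f_n)}$ is traded for $\dist(f_n,V_n)$ via $\n{\mathcal{R}_n}_{V^*}\le B_n^g$, with the subtlety that one actually applies this at an index around $n/2$ coming from the $m$ vs.\ $n$ split, yielding $B^g_{n/2}$.

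The main obstacle I anticipate is precisely tracking which index the $B^g$ factor should carry. In the direct estimate (Theorem~\ref{theorem_nga_direct}) the conversion happens at iteration $n+1$, giving $B^g_{n+1}$; in the indirect estimate the geometry of the $\min_{0<m<n}$ split changes where the single $\mathcal{R}$-projection bound must be applied, and one must verify that the relevant index is bounded by $n/2$ uniformly in the choice of $m$, or re-derive the indirect OGA bound keeping the $B^g$ weights inside and then optimize. A secondary technical point is that $\{\tau_n\}$ is not monotone (unlike $\{\sigma_n\}$), so any step in the OGA proof that used monotonicity of the residual sequence must be rerouted through $\sigma_n\le\tau_n\le B_n^g\sigma_n$ together with the monotonicity of $\sigma_n$; this should go through but requires care so that the extra $B^g$ factors do not compound across iterations. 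Once the bookkeeping is settled, the $\sqrt{2}$, the $\gamma_{n+m}(\mathcal{X})$, and the $d_m^{1-m/n}$ carry over verbatim from Theorem~\ref{theorem_ga_indirect}, since the NGA reduced basis $\{g_k\}$ still spans nested subspaces $V_k$ of exact dimension $k$ and the Banach–Mazur/Euclidean-norm comparison~\eqref{e_norm} is used in the same way.
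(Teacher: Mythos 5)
Your high-level strategy---rerun the delayed OGA argument with the NGA residuals and convert between $\n{\mathcal{R}_i(\cdot)}$ and $\dist(\cdot,V_i)$ via~\eqref{estimate_sigma_tau}---is indeed the route the paper takes, and your remedy for the non-monotonicity of $\{\tau_n\}$ (reroute through $\sigma_n\le\sigma_i\le\tau_i$) is exactly what is done. But the point you yourself flag as the ``main obstacle'' is a genuine gap, and both mechanisms you conjecture for the index $n/2$ are wrong. In the paper's proof the $\mathcal{R}$-to-$\dist$ conversion is not invoked once near index $n/2$, nor only on the second half of the iterations: it is applied at \emph{every} iteration $i=0,\ldots,n-1$. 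The delayed estimate is a Hadamard-type determinant argument (Lemma~\ref{lemma_proj}, from~\cite{depewo2013}): one Gram--Schmidt-orthonormalizes $f_0,\ldots,f_{n-1}$ in a Euclidean norm $\n{\cdot}_e$ on $\spn\{V_n,X_m\}$ satisfying~\eqref{e_norm}, and the diagonal entries of the resulting lower-triangular matrix are $a_{ii}=\dist_e(f_i,V_i)\ge\dist(f_i,V_i)$. Since the NGA only guarantees maximality of $\tau_i=\n{\mathcal{R}_i(f_i)}$, each diagonal entry must be bounded below by $\tau_i/\n{\mathcal{R}_i}_{V_{i+1}^*}$ (legitimate because $f_i-\proj(f_i,V_i)\in V_{i+1}$, so Lemma~\ref{lemma_nga!=ga} is not an obstruction here), and then $\prod_{i=0}^{n-1}a_{ii}^2\ge\sigma_n^{2n}\big/\prod_{i=0}^{n-1}\n{\mathcal{R}_i}_{V_n^*}^{2}$ is compared with $2^n\gamma_{n+m}^{2n}\br{(1+\epsilon)d_m}^{2(n-m)}$.

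The factor $B^g_{n/2}$ is thus purely a geometric-averaging effect: $\n{\mathcal{R}_i}_{V_n^*}\le\min\{R^i,\,C_g+1\}$, hence
\[
	\br{\prod_{i=0}^{n-1}\n{\mathcal{R}_i}_{V_n^*}}^{1/n}\le\min\{R^{(n-1)/2},\,C_g+1\}\le B^g_{n/2},
\]
where the exponent comes from $0+1+\cdots+(n-1)=n(n-1)/2$ after taking the $n$-th root; it has nothing to do with the $m$-versus-$n$ split or with which half of the block the equivalence is used on. Your fallback option (``re-derive the indirect bound keeping the $B^g$ weights inside and then optimize'') is the correct one, but without the observation that the per-iteration weights grow like $R^i$ and are tamed by the geometric mean, your argument as written would either give only the weaker factor $B^g_n$ (bounding every weight by the largest) or compound a factor per iteration into something exponentially worse, so the stated constant is not reached. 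Finally, the framing of the NGA as ``at least as aggressive as a weak OGA'' is precisely what Lemma~\ref{lemma_nga!=ga} refutes on $\mathcal{F}$; the proof only goes through because the norm of $\mathcal{R}_i$ is taken on the subspace $V_{i+1}$, not on arbitrary elements of $\mathcal{F}$.
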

\begin{corollary}
For a compact set $\mathcal{F} \subset L_p$ with any $1 \le p \le \infty$ the Natural Greedy Algorithm provides
\[
	\sigma_n(\mathcal{F},L_p) \le \sqrt{2} B^g_{n/2}(\mathcal{F,X}) \min_{0 < m < n} (n+m)^{|\frac{1}{2}-\frac{1}{p}|} \, d_m^{1-m/n}(\mathcal{F},L_p).
\]
In particular,
\[
	\sigma_{2n}(\mathcal{F},L_p) \le \sqrt{6} B^g_n(\mathcal{F,X}) \, n^{|\frac{1}{2}-\frac{1}{p}|} \sqrt{d_n(\mathcal{F},L_p)}.
\]
\end{corollary}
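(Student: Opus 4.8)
The plan is to run the proof of Theorem~\ref{theorem_ga_indirect} essentially verbatim on the sequences $\{f_n\}_{n=0}^\infty$, $\{V_n\}_{n=0}^\infty$ generated by the NGA, absorbing the fact that the NGA greedily maximizes $\n{\mathcal{R}_n(\cdot)}$ rather than $\dist(\cdot,V_n)$ into the operator estimates $\n{\mathcal{R}_k}_{V^*}\le B_k^g$ from~\eqref{estimate_norm_R_n_V}. The first step is to extract from the NGA selection rule a statement slightly stronger than $\sigma_n\le\tau_n$: for every $f\in\mathcal{F}$ and every $0\le k\le n$, using the nesting $V_k\subset V_n$, the inclusion $f-\mathcal{R}_k(f)\in\ker\mathcal{R}_k=V_k$ (which follows from $\mathcal{R}_k^2=\mathcal{R}_k$), and the maximality $\tau_k=\n{\mathcal{R}_k(f_k)}=\sup_{f'\in\mathcal{F}}\n{\mathcal{R}_k(f')}$,
\[
	\dist(f,V_n)\le\dist(f,V_k)\le\n{\mathcal{R}_k(f)}\le\tau_k .
\]
Taking the supremum over $f\in\mathcal{F}$ gives $\sigma_n\le\tau_k$ for every $0\le k\le n$, hence $\sigma_n^{n}\le\prod_{k=0}^{n-1}\tau_k$.

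Next I would replace each $\tau_k$ by the genuine distance $\dist(f_k,V_k)$. Since $\mathcal{R}_k$ kills $V_k$, we have $\tau_k=\n{\mathcal{R}_k\big(f_k-\proj(f_k,V_k)\big)}$, and the vector $f_k-\proj(f_k,V_k)$ belongs to the closed linear span $V$ of the generated basis $\{g_n\}_{n=0}^\infty$ (see~\eqref{estimate_norm_R_n_V}) — indeed the chosen element $f_k$ itself lies in $V_{k+1}\subset V$, and $\proj(f_k,V_k)\in V_k\subset V$ — so~\eqref{estimate_norm_R_n_V} yields $\tau_k\le\n{\mathcal{R}_k}_{V^*}\,\dist(f_k,V_k)\le B_k^g\,\dist(f_k,V_k)$, and therefore
\[
	\sigma_n^{n}\le\Big(\prod_{k=0}^{n-1}B_k^g\Big)\Big(\prod_{k=0}^{n-1}\dist(f_k,V_k)\Big).
\]
The second product is precisely the quantity controlled inside the proof of Theorem~\ref{theorem_ga_indirect}: equipping $W=\spn\big(V_n\cup X_m\big)$ (with $X_m$ a near-optimal $m$-dimensional subspace, so $\dim W\le n+m$) with the Euclidean norm of~\eqref{e_norm}, and estimating the Gram determinant of $f_0,\dots,f_{n-1}$ in that norm by peeling off the at most $m$ directions contributed by $X_m$ and applying Weyl's eigenvalue inequality together with AM--GM, one gets for every $0<m<n$
\[
	\prod_{k=0}^{n-1}\dist(f_k,V_k)\le\big(\sqrt 2\,\gamma_{n+m}(\mathcal{X})\big)^{n}\,d_m(\mathcal{F,X})^{\,n-m}.
\]
Since this bound uses only $\n{f_k}\le 1$ and $\dist(f_k,X_m)\le d_m(\mathcal{F,X})$ and never the OGA selection rule, it applies unchanged to the NGA sequences. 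The main (and essentially only) subtlety to watch is the domain bookkeeping: $\mathcal{R}_k$ is controlled only on $V$, while a generic $f\in\mathcal{F}$ need not lie in $V$, which is exactly why $\sigma_n\le\tau_k$ must be obtained first (it needs only $\mathcal{R}_k(f)=f-(\text{element of }V_k)$) and the operator estimate is then applied only to the chosen $f_k$, which do lie in $V$.

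Combining the two displays, taking $n$-th roots, and minimizing over $0<m<n$ gives
\[
	\sigma_n(\mathcal{F,X})\le\Big(\prod_{k=0}^{n-1}B_k^g\Big)^{1/n}\sqrt 2\,\min_{0<m<n}\gamma_{n+m}(\mathcal{X})\,d_m(\mathcal{F,X})^{\,1-m/n},
\]
so it only remains to check $\big(\prod_{k=0}^{n-1}B_k^g\big)^{1/n}\le B_{n/2}^g$. This is elementary: with $B_k^g=\min\{R(\mathcal{X})^k,\,C_g(\mathcal{F,X})+1\}$ and $R(\mathcal{X})\ge 1$, every factor is $\le R^k$, so the geometric mean is $\le R^{(n-1)/2}\le R^{n/2}$, and every factor is $\le C_g+1$, so the geometric mean is also $\le C_g+1$; hence it is $\le\min\{R^{n/2},C_g+1\}=B_{n/2}^g$. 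This proves Theorem~\ref{theorem_nga_indirect}. The Corollary is then immediate from the bound $\gamma_{n+m}(L_p)\le(n+m)^{|\frac12-\frac1p|}$ of~\eqref{gamma_n(L_p)}, and the ``in particular'' estimate follows by applying the theorem at index $2n$ with $m=n$, noting that $B_{n/2}^g$ becomes $B_n^g$, that $d_n^{\,1-n/(2n)}=\sqrt{d_n}$, and that $(3n)^{|\frac12-\frac1p|}\le\sqrt3\,n^{|\frac12-\frac1p|}$, so that $\sqrt2\cdot\sqrt3=\sqrt6$.
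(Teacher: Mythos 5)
Your proposal is correct and takes essentially the same route as the paper: the corollary follows from Theorem~\ref{theorem_nga_indirect} via the bound $\gamma_{n+m}(L_p)\le(n+m)^{|\frac12-\frac1p|}$ of~\eqref{gamma_n(L_p)} (taking $m=n$ at index $2n$ and using $3^{|\frac12-\frac1p|}\le\sqrt{3}$ for the second display), exactly as you conclude. Your re-derivation of the theorem itself --- the chain $\sigma_n\le\tau_k\le B_k^g\,\dist(f_k,V_k)$ followed by the product-of-distances estimate in the Euclidean norm of~\eqref{e_norm} --- is the paper's argument in lightly repackaged form, your ``Weyl plus AM--GM'' Gram-determinant step being precisely the content of Lemma~\ref{lemma_proj} cited from~\cite{depewo2013}.
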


\noindent
Finally, we provide the estimates for convergence rates in special cases when the rate of decay of Kolmogorov $n$-widths is known.
\begin{theorem}\label{theorem_nga_special}
For a compact set $\mathcal{F} \subset \mathcal{X}$ the Natural Greedy Algorithm results in the following consequences:
\begin{enumerate}
	\item[\normalfont{1.}] If $d_n(\mathcal{F,X}) \le A \exp(-a n^\alpha)$ for some constants $0 < \alpha,a,A < \infty$ then
	\[
		\sigma_n(\mathcal{F,X}) \le B \gamma_{3n/2}(\mathcal{X}) \exp(-b n^\alpha),
	\]
	with $B = B(A,\mathcal{F,X}) = \sqrt{2A} \, B_{n/2}^g(\mathcal{F,X})$ and $b = b(\alpha,a) = 2^{-(\alpha+1)} a$.
	Moreover, factor $\gamma_{3n/2}(\mathcal{X})$ can be removed by decreasing the constant $b$.
	\item[\normalfont{2.}] If $d_n(\mathcal{F,X}) \le A n^{-\alpha}$ and $\gamma_n(\mathcal{X}) \le C n^{\mu}$ for some constants $0 < \alpha,A,C < \infty$ and $0 \le \mu \le \min\{\alpha,1/2\}$ then
	\[
		\sigma_n(\mathcal{F,X}) \le B (\ln n)^{\min\{\alpha,1/2\}} \, n^{-\alpha + \mu},
	\]
	with $B = B(\alpha,\mu,A,C)$.
\end{enumerate}
\end{theorem}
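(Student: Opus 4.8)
The plan is to derive both parts from the delayed estimate of Theorem~\ref{theorem_nga_indirect}, exactly as the corresponding OGA statements are obtained from Theorem~\ref{theorem_ga_indirect} in~\cite{depewo2013,wo2015,ng2018}; the only additional ingredient is the uniform bound $B_n^g(\mathcal{F,X}) \le C_g(\mathcal{F,X})+1$ recorded in~\eqref{estimate_norm_R_n_V}, which keeps the NGA-specific distortion independent of $n$.

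For part~1 I would take the single choice $m = \lfloor n/2 \rfloor$ in Theorem~\ref{theorem_nga_indirect}, so that $1-m/n$ is essentially $\tfrac12$ and $n+m$ is essentially $\tfrac{3n}{2}$. Substituting $d_m(\mathcal{F,X}) \le A\exp(-a m^\alpha)$ gives
\[
	d_m^{1-m/n}(\mathcal{F,X}) \le A^{1/2}\exp\!\bigl(-\tfrac{a}{2}(n/2)^\alpha\bigr) = \sqrt{A}\,\exp\!\bigl(-2^{-(\alpha+1)}a\, n^\alpha\bigr),
\]
whence $\sigma_n(\mathcal{F,X}) \le \sqrt{2A}\, B_{n/2}^g(\mathcal{F,X})\, \gamma_{3n/2}(\mathcal{X})\,\exp(-b n^\alpha)$ with $b = 2^{-(\alpha+1)}a$, which is the asserted bound (the parity correction for odd $n$ changes only multiplicative constants, absorbed into $B$). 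To remove $\gamma_{3n/2}(\mathcal{X})$ I would invoke the universal bound $\gamma_k(\mathcal{X}) \le \sqrt{k}$: since $\alpha>0$ one has $\tfrac12\ln(3n/2) = o(n^\alpha)$, so $\gamma_{3n/2}(\mathcal{X})\exp(-b n^\alpha) \le B'\exp(-b' n^\alpha)$ for any fixed $b'\in(0,b)$ and a suitable $B'$, the constant being absorbed into $B$.

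For part~2 I would first apply Theorem~\ref{theorem_nga_indirect} with $m = \lceil n/\ln n \rceil$ (for $n$ large), using $B_{n/2}^g \le C_g+1$ and $\gamma_{n+m}(\mathcal{X}) \le C(2n)^\mu$. Writing $\delta_n = A n^{-\alpha}(\ln n)^\alpha$, one has $d_m(\mathcal{F,X}) \le A m^{-\alpha} \le \delta_n$ and $0 < 1-m/n < 1$, so by monotonicity $d_m^{1-m/n}(\mathcal{F,X}) \le \delta_n^{1-m/n}$; since $m/n = O(1/\ln n)$ the factor $\delta_n^{-m/n} = \exp(-\tfrac{m}{n}\ln\delta_n)$ stays bounded, and therefore $d_m^{1-m/n}(\mathcal{F,X}) \le \mathrm{const}\cdot n^{-\alpha}(\ln n)^\alpha$. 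Combined with $(2n)^\mu$ this already yields $\sigma_n(\mathcal{F,X}) \le B(\ln n)^\alpha n^{-\alpha+\mu}$, which proves the claim when $\alpha \le 1/2$. To sharpen the logarithmic power to $(\ln n)^{\min\{\alpha,1/2\}}$ for $\alpha>1/2$, I would transplant the refinement of~\cite{ng2018}, carrying out its argument verbatim with the orthogonal remainder $f-\proj(f,V_n)$ replaced throughout by $\mathcal{R}_n(f)$ and with the identity $\n{f-\proj(f,V_n)} = \dist(f,V_n)$ replaced by the two-sided bound $\dist(f,V_n) \le \n{\mathcal{R}_n(f)} \le (C_g+1)\dist(f,V_n)$ (cf.~\eqref{estimate_dist_R_n_V} and~\eqref{estimate_sigma_tau}). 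Every distortion introduced in this way is a power of the $n$-independent number $C_g+1$, so the distortions collect into the final constant $B = B(\alpha,\mu,A,C)$ (with implicit dependence on $\mathcal{X}$ and on $C_g(\mathcal{F,X})$), and neither the exponent $-\alpha+\mu$ nor the power of $\ln n$ is affected.

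The routine part is part~1 together with the $(\ln n)^\alpha$ half of part~2: these amount to one optimized application of Theorem~\ref{theorem_nga_indirect} followed by the standard ``polynomial-absorbed-by-exponential'' estimate and an elementary optimization over $m$. The main obstacle is the $(\ln n)^{1/2}$ sharpening in part~2: one must check that, after transplanting the scheme of~\cite{ng2018} to the NGA, the per-step distortion $C_g+1$ does not compound into an $n$-dependent factor such as $(C_g+1)^{\Theta(\ln n)}=n^{\Theta(1)}$, which would spoil the exponent $-\alpha+\mu$; this is ensured by the uniform bound $B_n^g \le C_g+1$ together with the fact that the scheme builds its estimate from only boundedly many invocations of the two-sided bound, so that only $O(1)$ factors of $C_g+1$ enter.
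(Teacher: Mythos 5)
Your proposal is correct and follows essentially the same route as the paper: part~1 is exactly the paper's one-line application of Theorem~\ref{theorem_nga_indirect} with $n=2m$ (giving $b=2^{-(\alpha+1)}a$, with $\gamma_{3n/2}$ absorbed via $\gamma_k\le\sqrt{k}$), and for part~2 the paper likewise transplants the OGA arguments of~\cite{wo2015} and~\cite{ng2018} verbatim, with the NGA distortion controlled by the uniform bound $B_n^g\le C_g+1$. You actually supply more detail than the paper does (the elementary $(\ln n)^\alpha$ bound via $m\approx n/\ln n$ and the explicit remark that $B$ implicitly depends on $C_g(\mathcal{F,X})$), which is a reasonable and accurate elaboration rather than a different approach.
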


\noindent
Proofs of all stated results can be found in Appendix~\ref{section_proofs}.

\subsection{Comments on the Weak Natural Greedy Algorithm}
We conclude this section by pointing out that the NGA admits the same simplification steps as the OGA as discussed in Section~\ref{section_ga}.
Namely, one can consider a weak version of the greedy selection step by taking any such $f_n^w \in \mathcal{F}$ that 
\[
    \n{\mathcal{R}_n(f_n^w)} \ge \gamma \sup_{f\in\mathcal{F}} \n{\mathcal{R}_n(f)},
\]
with some weakness parameter $\gamma \in (0,1]$, or find surrogates $s_n : \mathcal{X} \to \mathbb{R}$ such that
\[
	c_s s_n(f) \le \n{\mathcal{R}_n(f)} \le C_s s_n(f) \ \text{ for any } f \in \mathcal{X} \text{ and } n \ge 0,
\]
with some constants $0 < c_s \le C_s < \infty$ and select $f_n^s = \operatorname{argmax}_{f\in\mathcal{F}} s_n(f)$.
Any such modification would qualify as the Weak NGA and all the results stated in this section would hold with the additional factor of $\gamma^{-1}$ or $C_s / c_s$ respectively.

\section{Other reduced bases methods}\label{section_other_rb}

In this section we briefly discuss two well-known methods for constructing reduced bases, namely, the Proper Orthogonal Decomposition and the Empirical Interpolation Method.

\subsection{The Proper Orthogonal Decomposition}
The Proper Orthogonal Decomposition (POD, also known as the Principal Component Analysis and the Karhunen--Lo\`eve expansion) is applicable to a finite set of discrete data.
Let $f_1, \ldots, f_M$ be vectors in $\mathbb{R}^N$ and let $F \in \mathbb{R}^{N \times M}$ be the matrix, whose columns are formed by the vectors $f_m$, $1 \le m \le M \le N$.
In order to obtain an $n$-dimensional approximating subspace for $F$, the POD performs the compact singular value decomposition of the matrix $F$, i.e.,
\[
	F = U \Sigma V,
	\text{ with }
	U \in \mathbb{R}^{N \times M},\ 
	\Sigma \in \mathbb{R}^{M \times M},
    \text{ and }
	V \in \mathbb{R}^{M \times M},
\]
and defines the reduced basis $h_0,\ldots,h_{n-1}$ to be the first $n \le M$ columns of matrix $U$, or, equivalently,
\[
	h_{k-1} = \sigma_k^{-1} \sum_{j=1}^N v_{kj} f_j,
	\quad
	1 \le k \le n.
\]
The constructed subspace $\spn\{h_0,\ldots,h_{n-1}\}$ is optimal for approximating the space $\spn\{f_1,\ldots,f_M\}$ in $L_2$-norm in the sense that the reduced basis $\{h_k\}_{k=0}^{n-1}$ consists of singular vectors corresponding to the $n$ largest singular values of $F$ and, thus, contains the most information of the matrix $F$.
For a more detailed discussion on the Proper Orthogonal Decomposition see, e.g.,~\cite{ch2000,wipe2002}.

\subsection{The Empirical Interpolation Method}
The Empirical Interpolation Method (EIM) is designed for approximating a compact set of functions $\mathcal{F} \subset L_\infty(\Omega)$ and can be viewed as a particular greedy algorithm since it utilizes the iterative greedy selection approach to construct the reduced basis.
For a given set of functions $h_0,\ldots,h_{n-1} \in L_\infty(\Omega)$ and points $z_0,\ldots,z_{n-1} \in \Omega$ define the operator $\mathcal{I}_n : L_\infty(\Omega) \to L_\infty(\Omega)$ given as
\[
	\mathcal{I}_n(f) = \left\{
	\begin{array}{ll}
		0 & n = 0,
		\\
		\sum_{k=0}^{n-1} \beta_k h_k & n > 0,
	\end{array}
	\right.
\]
where $\{\beta_k\}_{k=0}^{n-1}$ is the solution of the linear system
\begin{equation}\label{eim_linear_system}
	\sum_{k=0}^{n-1} \beta_k h_k(z_m) = f(z_m)
	\ \text{ for any }\ 
	0 \le m \le n-1.
\end{equation}
Then the EIM recursively constructs the sequence of interpolation points $\{z_n\}_{n=0}^\infty \in \Omega$ and the reduced basis sequence $\{h_n\}_{n=0}^\infty \in L_\infty(\Omega)$ via the process defined below.
\begin{definition}[{{\bf Empirical Interpolation Method}}]
For $\mathcal{F} \subset L_\infty(\Omega)$ consider the following iterative procedure: 
\begin{enumerate}
\item[{\bf Step ${\bm 0}$.}]
    find $f_0 = \operatorname{argmax}_{f\in\mathcal{F}} \n{f - \mathcal{I}_0(f)}_{L_\infty(\Omega)}$
    \medskip\\
    \phantom{find $f_0=\operatorname{argmax}_{f\in\mathcal{F}}$}
    and $z_0 = \mathop{\mathrm{argmax}}_{z \in \Omega} |\br{f_0 - \mathcal{I}_0(f_0)}(z)|$,
    \smallskip\\
    \phantom{find $f_0=\operatorname{argmax}_{f\in\mathcal{F}}$ and $z_0=\operatorname{argmax}_{z\in\Omega}$}
    and let $\displaystyle{h_0 = \frac{f_0 - \mathcal{I}_0(f_0)}{\br{f_0 - \mathcal{I}_0(f_0)}(z_0)}}$;
\item[{\bf Step ${\bm 1}$.}]
    find $f_1 = \operatorname{argmax}_{f\in\mathcal{F}} \n{f - \mathcal{I}_1(f)}_{L_\infty(\Omega)}$
    \medskip\\
    \phantom{find $f_1=\operatorname{argmax}_{f\in\mathcal{F}}$}
    and $z_1 = \mathop{\mathrm{argmax}}_{z \in \Omega} |\br{f_1 - \mathcal{I}_1(f_1)}(z)|$,
    \smallskip\\
    \phantom{find $f_1=\operatorname{argmax}_{f\in\mathcal{F}}$ and $z_1=\operatorname{argmax}_{z\in\Omega}$}
    and let $\displaystyle{h_1 = \frac{f_1 - \mathcal{I}_1(f_1)}{\br{f_1 - \mathcal{I}_1(f_1)}(z_1)}}$;
\item[${\dots}$]
\item[{\bf Step ${\bm n}$.}]
    find $f_n = \operatorname{argmax}_{f\in\mathcal{F}} \n{f - \mathcal{I}_n(f)}_{L_\infty(\Omega)}$
    \medskip\\
    \phantom{find $f_n=\operatorname{argmax}_{f\in\mathcal{F}}$}
    and $z_n = \mathop{\mathrm{argmax}}_{z \in \Omega} |\br{f_n - \mathcal{I}_n(f_n)}(z)|$,
    \smallskip\\
    \phantom{find $f_n=\operatorname{argmax}_{f\in\mathcal{F}}$ and $z_n=\operatorname{argmax}_{z\in\Omega}$}
    and let $\displaystyle{h_n = \frac{f_n - \mathcal{I}_n(f_n)}{\br{f_n - \mathcal{I}_n(f_n)}(z_n)}}$.
\end{enumerate}
\end{definition}

\noindent
Thus the constructed subspace $\spn\{h_0,\ldots,h_{n-1}\}$ aims to approximate the given set $\mathcal{F}$ in $L_\infty$-norm.
For more information on the EIM we refer the reader to the papers~\cite{paetal2004,paetal2007,paetal2007b}.

\subsection{The Natural Greedy Algorithm as a generalization of the Empirical Interpolation Method}
We prove here that the EIM is a particular realization of the NGA in $L_\infty$-space, in the sense that both algorithms produce the same approximating subspaces and the same (up to the sign) basis sequence.
More specifically, let $\{f_n\}_{n=0}^\infty$ be the sequence of elements selected by EIM from the set $\mathcal{F} \subset L_\infty(\Omega)$, and let $\{h_n\}_{n=0}^\infty$ be the corresponding basis sequence.
We will use induction to show that there is a viable realization of NGA which picks exactly the same elements $f_0,f_1,f_2,\ldots \in \mathcal{F}$ and that $\mathcal{R}_n(f) = f - \mathcal{I}_n(f)$ for any $f \in \mathcal{F}$ and any $n \ge 0$.

Indeed, at the first iteration EIM selects an element $f_0 \in \mathcal{F}$ and a point $z_0 \in \Omega$ given by
\[
	f_0 = \mathop{\mathrm{argmax}}_{f \in \mathcal{F}} \n{f}_{L_\infty(\Omega)},\ \ 
	z_0 = \mathop{\mathrm{argmax}}_{z \in \Omega} |f_0(z)|,
	\ \text{ and }\ 
	h_0 = \frac{f_0}{|f_0(z_0)|}.
\]
The same element $f_0$ can be selected by NGA as well, and by using the norming functional~\eqref{norming_functional_linf}, given in this case as
\[
	F_{g_0}^{L_\infty}(f) = \operatorname{sgn}(g_0(z_0)) \, f(z_0),
\]
with $g_0 = f_0 / \n{f_0}_{L_\infty(\Omega)} = \pm h_0$, we conclude that for any $f \in \mathcal{F}$
\[
	\mathcal{R}_1(f) = f - F_{g_0}^{L_\infty}(f) \, g_0
	= f - \frac{\operatorname{sgn}(f_0(z_0)) \, f(z_0)}{\n{f_0}_{L_\infty(\Omega)}} f_0
	= f - \mathcal{I}_1(f),
\]
which proves the base of induction.
Assume that the assumption holds for $m > 0$, i.e. NGA selected elements $f_0,\ldots,f_{m-1}$ and $\mathcal{R}_n(f) = f - \mathcal{I}_n(f)$ (and thus $g_n = \pm h_n$) for any $0 \le n \le m$.
Then the element $f_m$ given by
\[
	f_m = \mathop{\mathrm{argmax}}_{f \in \mathcal{F}} \n{f - \mathcal{I}_m(f)}_{L_\infty(\Omega)}
	= \mathop{\mathrm{argmax}}_{f \in \mathcal{F}} \n{\mathcal{R}_m(f)}_{L_\infty(\Omega)},
\]
can be selected by NGA, and hence $g_m = \mathcal{R}_m(f_m) / \n{\mathcal{R}_m(f_m)}_{L_\infty(\Omega)}$.
Let $z_m \in \Omega$ be the $(m+1)$-st interpolation point selected by EIM, then from the induction assumption we obtain
\[
	z_m = \mathop{\mathrm{argmax}}_{z \in \Omega} |\br{f_m - \mathcal{I}_m(f_m)}(z)|
	= \mathop{\mathrm{argmax}}_{z \in \Omega} |\mathcal{R}_m(f_m)(z)|
	= \mathop{\mathrm{argmax}}_{z \in \Omega} |g_m(z)|,
\]
and thus the norming functional $F_{g_m}^{L_\infty}$ is given by
\[
	F_{g_m}^{L_\infty}(f) = \operatorname{sgn}(g_m(z_m)) \, f(z_m).
\]
Therefore, from the semi-orthogonal relation~\eqref{nga_F_k(R_m)=0} between the reduced basis $g_0,\ldots,g_m$ constructed by NGA and the corresponding operator $\mathcal{R}_{m+1}$, we deduce that for any $f \in \mathcal{F}$
\[
	F_{g_n}^{L_\infty}(\mathcal{R}_{m+1}(f)) = \operatorname{sgn}(g_n(z_n)) \, \big(\mathcal{R}_{m+1}(f)\big)(z_n) = 0
	\ \text{ for any }\ 0 \le n \le m,
\]
i.e. $\mathcal{R}_{m+1}(f)(z_n) = 0$ for any $0 \le n \le m$.
Due to the fact that $\mathcal{R}_{m+1}(f) = f - \sum_{n=0}^m \alpha_n g_n$ for some coefficients $\{\alpha_n\}_{n=0}^m$, we deduce
\begin{equation}\label{nga_linear_system}
	\sum_{n=0}^k \alpha_n g_n(z_k) = f(z_k)
	\ \text{ for any }\ 
	0 \leq k \leq m.
\end{equation}
Since the vectors $f_0,\ldots,f_m$ are linearly independent and the systems~\eqref{eim_linear_system} and~\eqref{nga_linear_system} have the same the right-hand sides, we conclude that $\sum_{n=0}^m \alpha_n g_n = \sum_{n=0}^m \beta_n h_n$, i.e., both EIM and NGA construct the same approximation for an element $f \in \mathcal{F}$, and thus
\[
	\mathcal{R}_{m+1}(f) = f - \mathcal{I}_{m+1}(f),
\]
which proves the induction hypothesis.

Therefore, we have shown that in $L_\infty(\Omega)$, the EIM abd the NGA construct the same reduced basis (up to a sign, i.e., $h_n = \pm g_n$) and the same approximating subspaces, with the possible exception when the maximum of the norm $\n{f - \mathcal{I}_n(f)}_{L_\infty(\Omega)}$ is attained on multiple elements $f \in \mathcal{F}$ or the maximal value of $|h_n(z)|$ is attained at multiple points $z \in \Omega$.
We note that such situations are highly unlikely to occur naturally and in our numerical examples EIM and NGA generate identical approximating subspaces in $L_\infty$-spaces, which is why we do not compare algorithms in this setting.

We also note that even though the EIM has been studied extensively in various applications (see, e.g.,~\cite{paetal2007,paetal2007b,paetal2015}), it is only studied as an interpolation procedure in the sense that the only known results on the convergence properties of EIM are the ones that describe the interpolation properties of the basis $\{h_k\}_{k=0}^{n-1}$ with respect to the points $\{z_k\}_{k=0}^{n-1}$, and there are no results on the approximation properties of the corresponding subspaces $V_n = \spn\{h_0,\ldots,h_{n-1}\}$, which can be measured as
\[
	\sigma_n = \sigma_n(\mathcal{F},L_\infty(\Omega)) = \sup_{f\in\mathcal{F}} \n{f - \proj(f,V_n)}_{L_\infty(\Omega)}.
\]
However, since the EIM is a realization of the NGA, we deduce that the results from Section~\ref{section_nga} hold for the EIM as well.
In particular, for any compact set $\mathcal{F} \subset L_\infty(\Omega)$ we have $\sigma_n(\mathcal{F},L_\infty(\Omega)) \to 0$ as $n \to \infty$, and various comparisons between $\sigma_n(\mathcal{F},L_\infty(\Omega))$ and the Kolmogorov $n$-widths $d_n(\mathcal{F},L_\infty(\Omega))$ can also be stated.


\section{Numerical experiments}\label{section_numerics}
In this section we use several numerical examples to test and compare our proposed NGA with OGA, EIM, and POD for constructing reduced bases as described in Sections~\ref{section_ga}--\ref{section_other_rb}.
The experiments are performed in Octave~4.4.1 and the source code will be made available at the time of publication.

The approximation accuracy of an algorithm is measured in the following way: for a given Banach space $(\mathcal{X},\n{\cdot})$ and a compact set $\mathcal{F} \subset \mathcal{X}$ we construct a reduced basis $g_0,\ldots,g_{M-1}$ and then find for each element $f \in \mathcal{F}$ the norm of the remainder of the best approximation from $V_M = \spn\{g_0,\ldots,g_{M-1}\}$, given by the distance from $f$ to $V_M$
\[
	\dist(f,V_M) = \inf_{\alpha_0,\ldots,\alpha_{M-1} \in \mathbb{R}} \n{f - \sum_{n=0}^{M-1} \alpha_n g_n}.
\]
For completeness of the presented results we calculate and provide two approximation errors on the set $\mathcal{F}$: the average and maximal, i.e.,
\begin{align*}
	\text{error}_{\text{avg}} (\mathcal{F},V_M)
	&= \underset{f \in \mathcal{F}}{\text{avg}}\, \dist(f,V_M)
	\ \text{ and}
	\\
	\text{error}_{\text{max}} (\mathcal{F},V_M)
	&= \sup_{f \in \mathcal{F}} \dist(f,V_M)
\end{align*}
respectively.
In cases when the set $\mathcal{F}$ is infinite, we sample it to obtain a finite training set denoted $\mathcal{F}_{tr} \subset \mathcal{F}$ that is assumed to be well-representative of $\mathcal{F}$.
The approximation error in such cases is calculated on the training set $\mathcal{F}_{tr}$, namely the average and maximal errors given by
\begin{align*}
	\text{error}_{\text{avg}} (\mathcal{F}_{tr},V_M) 
	&= \frac{1}{|\mathcal{F}_{tr}|} \sum_{f \in \mathcal{F}_{tr}} \dist(f,V_M)
	\ \text{ and}
	\\
	\text{error}_{\text{max}} (\mathcal{F}_{tr},V_M) 
	&= \max_{f \in \mathcal{F}_{tr}} \dist(f,V_M)
\end{align*}
respectively.

\subsection{Computational complexity of reduced bases algorithms}
While the approximation accuracy is very likely to be the most important characteristic of a reduced basis, in some applications the choice of an algorithm is determined from the perspective of its computational complexity.
In this subsection we discuss such computational aspects of the various reduced basis algorithms.

We note that, unlike the greedy algorithms, both EIM and POD do not possess `weak' versions that simplify their realization (at least in their default formulations), hence we consider here the full greedy algorithms, i.e., the greedy step is performed for each element of the set $\mathcal{F}$.
In such case the NGA gains an additional advantage due to the fact that operators $\mathcal{R}_n$ defined by~\eqref{nga_R_n} can be represented in the hierarchical form, i.e.,
\[
	\mathcal{R}_n = r_{n-1} \circ \mathcal{R}_{n-1},
\]
thus allowing one to obtain the value of $\mathcal{R}_n(f)$ by applying the operator $r_{n-1}$ to the vector $\mathcal{R}_{n-1}(f)$ found on the previous iteration, which further reduces the computational cost of the algorithm.
Namely, on the $n$-th iteration of the NGA one has to compute a one-dimensional projection onto the previously constructed element $g_{n-1}$, as opposed to the OGA, where each iteration requires calculating a full $n$-dimensional projection onto $V_n$.

We compare the computational costs of the aforementioned algorithms in the following setting: let $\mathcal{F}$ be a compact subset of a Banach space $\mathcal{X}$, and $\mathcal{F}_{tr} \subset \mathcal{F}$ be a training set.
We consider a discretization $\mathcal{X}_h$ of the space $\mathcal{X}$ as required for the realization of POD.
Denote $N_{tr} = |\mathcal{F}_{tr}|$, $N_h = \dim \mathcal{X}_h$, and let $\varepsilon > 0$ be the accuracy with which every minimization problem in the realization of the OGA is solved.
Then, taking into account the hierarchical nature of operators $\mathcal{R}_n$, we estimate the order of the number of floating-point operations required by a realization of each algorithm, shown in the Table below:
\begin{center}
	\begin{tabular}{c|c|c}
	Algorithm & Cost of $m$-th iteration & Cost of $M$ first iterations
	\\\hline
	OGA & $(\log\varepsilon)^{m-1} \, N_{tr} N_h$ & $(\log\varepsilon)^{M-1} \, N_{tr} N_h$
	\\\hline
	NGA & $ N_{tr} N_h$ & $M  \, N_{tr} N_h$
	\\\hline
	EIM & $N_{tr} N_h$ & $M \, N_{tr} N_h$
	\\\hline
	POD & {\bf ---} & $N_{tr}^2 N_h$
	\end{tabular}
\end{center}

It might seem that the simplicity of each realization of the NGA comes at the cost of the reduced quality of the constructed approximating subspaces $V_n$; however, based on our observations, this does not appear to be the case and the approximation accuracy of NGA is comparable to other algorithms.
Note also that while POD and EIM appear to be computationally straightforward, they come with their own sets of drawbacks.
Namely, POD is not iterative and requires performing the singular value decomposition, which, depending on the matrix dimensions, can be very demanding in terms of memory allocation and might even be unfeasible.
The EIM is iterative and memory-efficient, however, on each iteration it requires finding the maximum of the function $|f - \mathcal{I}_n(f)|$ that is generally not convex and becomes more convoluted with each iteration.
Additionally, EIM depends on the smoothness of the training set and thus we expect it to be sensitive to the noisy data.
Finally, reduced bases constructed by POD and EIM aim to approximate in $L_2$-norm and $L_\infty$-norm respectively and offer no flexibility in that regard.

We also note that the stated computational cost estimates are quite theoretical and do not necessarily represent the real behavior of the algorithms in all settings.
In order to estimate the practical complexity we measure the time that each algorithm has spent on the central processing unit (e.g. cputime), which is directly proportional to the number of operations that must be performed in order to construct the reduced basis.

\noindent
Then to compare the effectiveness of the algorithms on concrete numerical examples we introduce the `quality' of a reduced basis~--- a characteristic that combines the approximation accuracy and the computational cost of constructing the basis.
Namely, we measure the approximation error provided by the reduced basis on the training set and calculate the `quality' of the reduced basis as the inverse of the product of the approximation error and the construction time, i.e.,
\[
	\text{quality}(\mathcal{F}_{tr},V_M) = \frac{1}{\text{error}(\mathcal{F}_{tr},V_M) \times \text{cputime}(V_M)}.
\]
Note that since we provide two values for the approximation error, two values for the `quality' are obtained for each algorithm, namely, the average and minimal, given by 
\begin{align*}
	\text{quality}_{\text{avg}} (\mathcal{F}_{tr},V_M)
	&= \frac{1}{\text{error}_{\text{avg}} (\mathcal{F}_{tr},V_M) 
	\times \text{cputime}(V_M)},
	\\
	\text{quality}_{\text{min}} (\mathcal{F}_{tr},V_M)
	&= \frac{1}{\text{error}_{\text{max}} (\mathcal{F}_{tr},V_M) 
	\times \text{cputime}(V_M)}
\end{align*}
respectively.

\subsection{Orthogonal Greedy Algorithm vs. Natural Greedy Algorithm}
In this subsection we concentrate on comparing the approximation properties of the OGA and the NGA but disregard for the discussion on computational cost until the next subsection.
The purpose of the presented numerical examples is to demonstrate that the reduced bases generated by these algorithms are comparably efficient in approximating the training set despite the differences in constructing the bases.

In order to obtain an unbiased comparison that is independent of the choice of training set, we approximate randomly generated synthetic data.
For each parameter we execute $10$ simulations and present the ratio of OGA/NGA approximation error distribution over those simulations.

Specifically, we consider a discrete Banach space $\ell_p^{(N_h)}$ of dimensionality $N_h = 1000$ with the canonical basis $\{e_k\}_{k=1}^{N_h}$, and a set $\{h_m\}_{m=1}^d$ of cardinality $d = 100$, randomly generated as 
\[
	h_m = \sum_{k=1}^{N_h} a_k^m e_k,
	\text{ where }\ 
	a_k^m \sim \mathcal{N}(0,1)
	\ \text{ for all }\ 
	1 \le k \le N_h,\ 1 \le m \le d,
\]
where $\mathcal{N}(0,1)$ denotes the standard normal distribution.
We then obtain the training set $\mathcal{F}_{tr} = \{f_n\}_{n=1}^{N_{tr}}$ by taking $N_{tr} = 1000$ linear combinations of vectors $h_1,\ldots,h_d$ with weights uniformly distributed on the interval $(-1,1)$, i.e.,
\[
	f_n = \sum_{m=1}^d c_m^n h_m,
	\text{ where }\ 
	c_m^n \sim \mathcal{U}(-1,1),
	\ \text{ for all }\ 
	1 \le m \le d,\ 1 \le n \le N_{tr}.
\]
Thus we generate a training set $\mathcal{F}_{tr} \in \ell_p^{(N_h)}$ (which consists of $N_{tr}$ vectors from the subspace $\spn\{h_1,\ldots,h_d\} \subset \ell_p^{(N_h)}$) and use it to construct reduced bases by applying the OGA and the NGA, and calculate the approximation errors (both average and maximal) on the training set after each iteration of the respective algorithms.
The approximation errors provided by the reduced basis constructed by the OGA is then divided by the errors provided by the reduced basis constructed by the NGA, and the graph of the ratios versus the cardinality of reduced bases for different values of $p$ is presented in Figure~\ref{fig:rnd}.

This experiment demonstrates how the geometry of the space affects the behavior of NGA.
In particular, we note that in the case $p = 3$ the approximating properties of NGA and OGA are virtually indistinguishable, due to the fact that the space $\ell_3$ is sufficiently close to the Hilbert space $\ell_2$, where the algorithms coincide.
In settings $p = 1, 5$, where the algorithms are more distinct, the distribution of the approximation errors still appears to be consistent in the sense that both algorithms perform similarly with NGA sometimes outperforming OGA.
In settings $p = 7, 10$ we observe that in terms of accuracy NGA is consistently outperformed by OGA but by a very slim margin.
Finally, in the case $p = \infty$ the ratio of the maximal approximation errors seems to be quite irregular, likely due to the non-smoothness of the space $\ell_\infty$ (even though it is not the issue in the case $p = 1$).

We also note that while the distribution of the ratio of the maximal errors appears to be influenced by the geometry of the space, the ratio of the average errors is consistent regardless of the setting.
We believe that this phenomenon is caused by the nature of operators $\mathcal{R}_n$: while they seem to be a reasonable substitute for computing projections for a large set of elements, the norm of the remainder of an individual element is not necessarily decreased, thus resulting in an irregular behavior of the maximal approximation error.

\begin{figure}
	\centering
	\includegraphics[width=\linewidth]{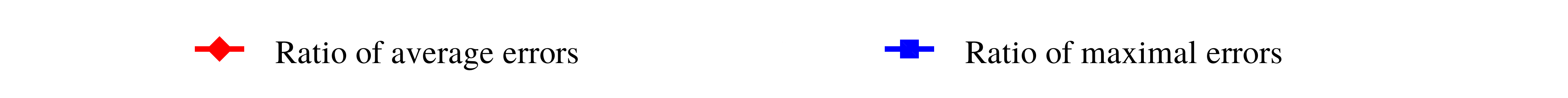}
	\begin{subfigure}{.49\linewidth}
		\includegraphics[width=\linewidth]{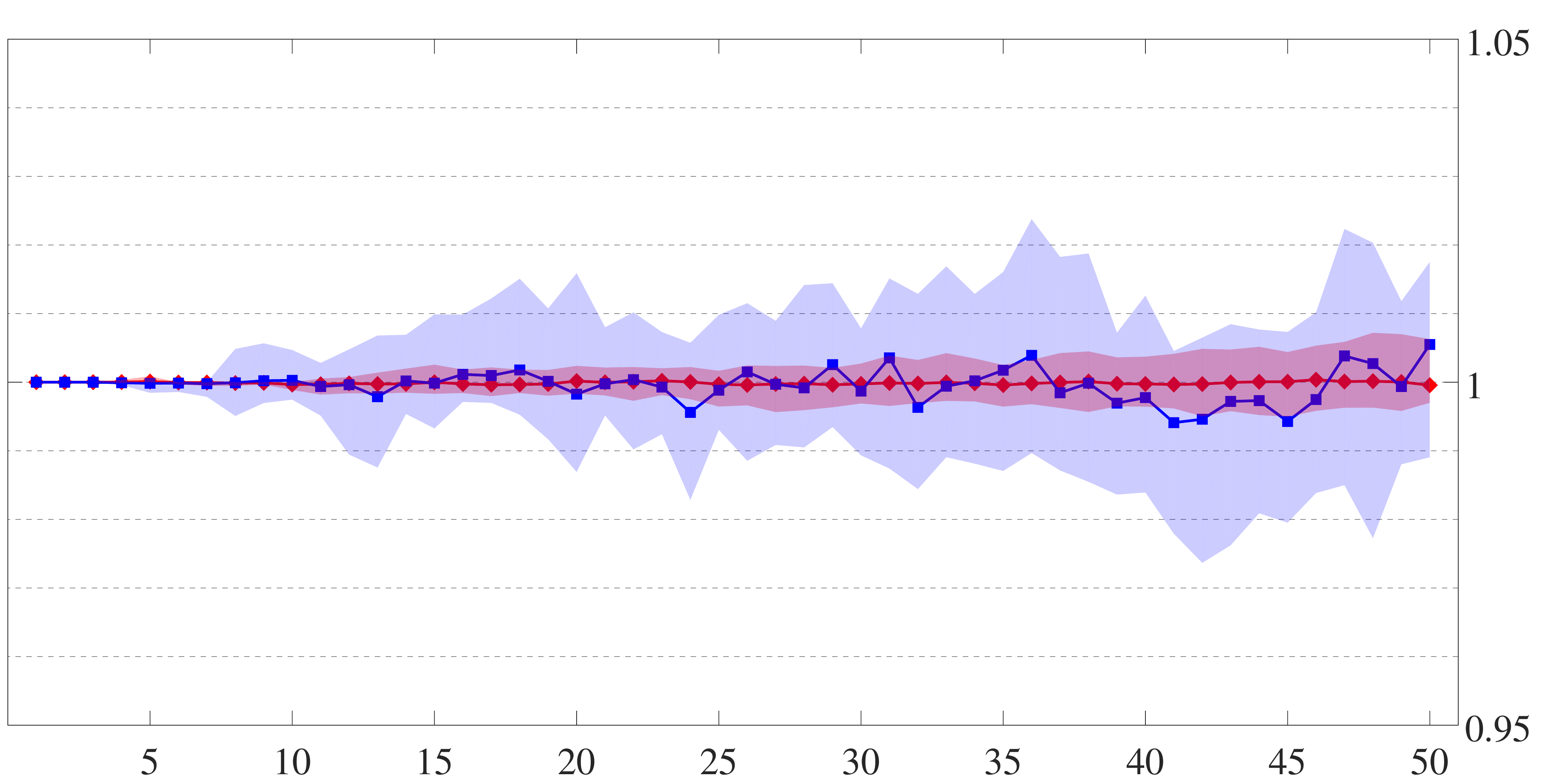}
		\caption{$p = 1$}
		\label{rnd1}
	\end{subfigure}
	\begin{subfigure}{.49\linewidth}
		\includegraphics[width=\linewidth]{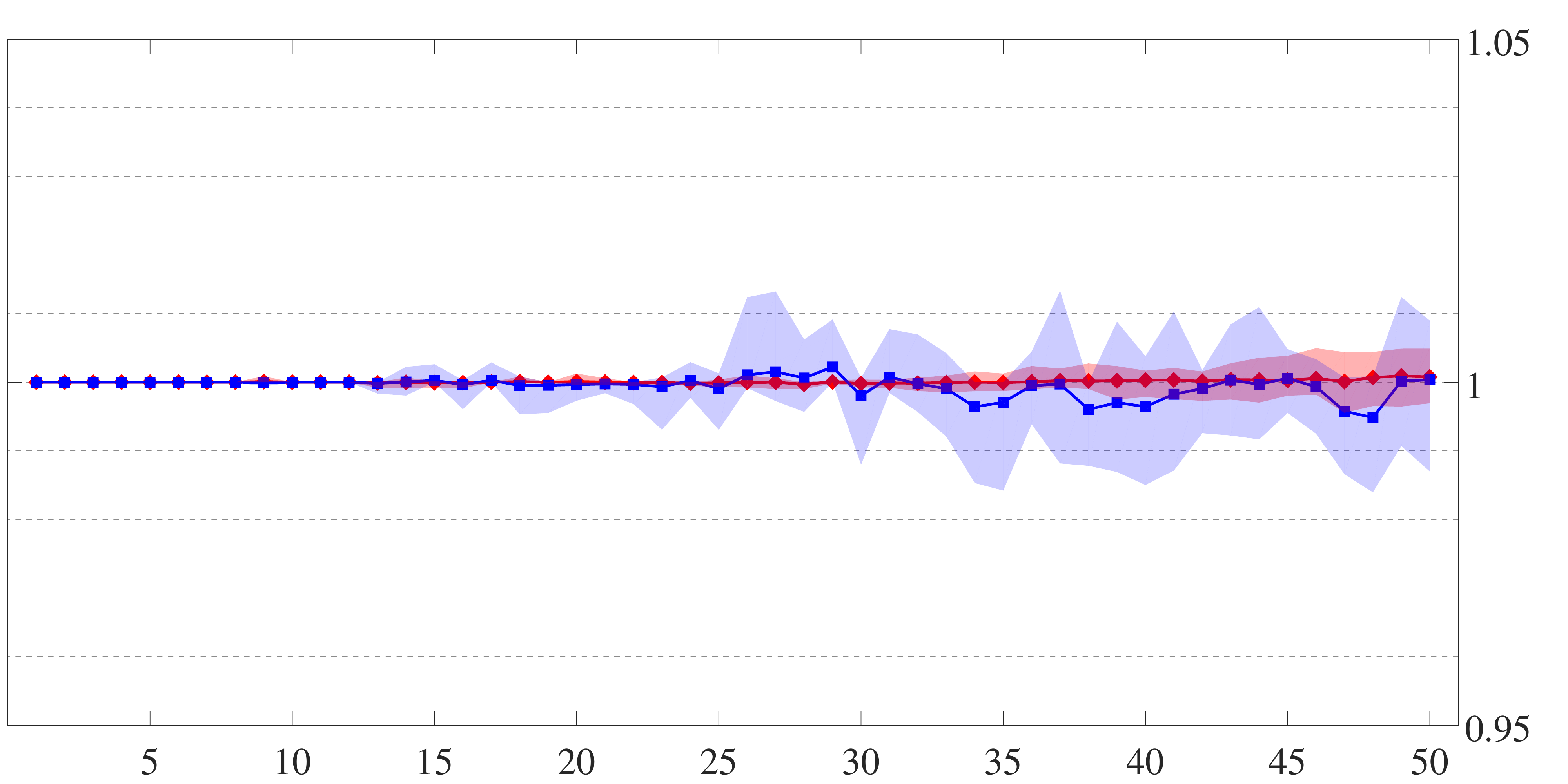}
		\caption{$p = 3$}
		\label{rnd3}
	\end{subfigure}
	\begin{subfigure}{.49\linewidth}
		\includegraphics[width=\linewidth]{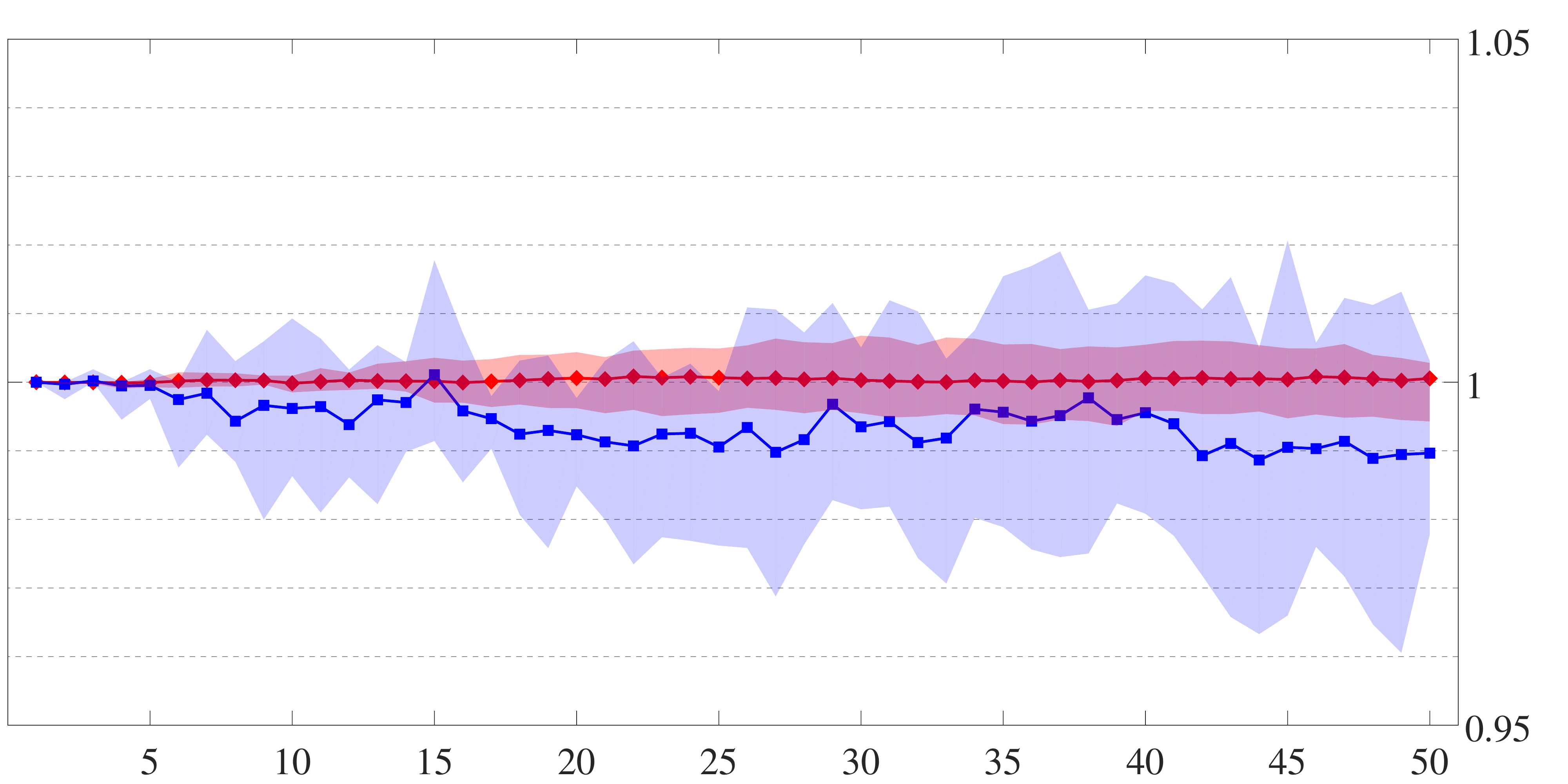}
		\caption{$p = 5$}
		\label{rnd5}
	\end{subfigure}
	\begin{subfigure}{.49\linewidth}
		\includegraphics[width=\linewidth]{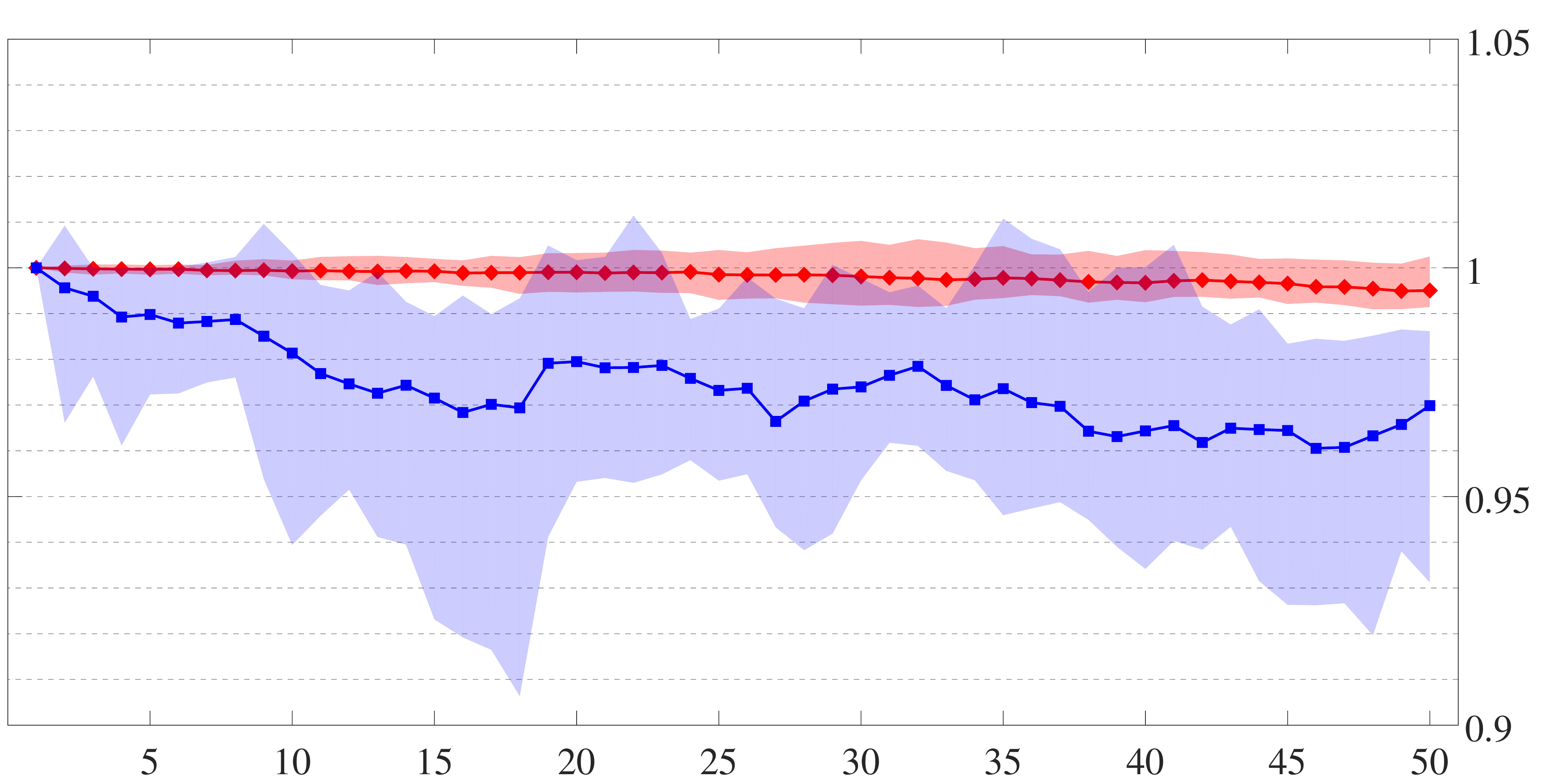}
		\caption{$p = 7$}
		\label{rnd7}
	\end{subfigure}
	\begin{subfigure}{.49\linewidth}
		\includegraphics[width=\linewidth]{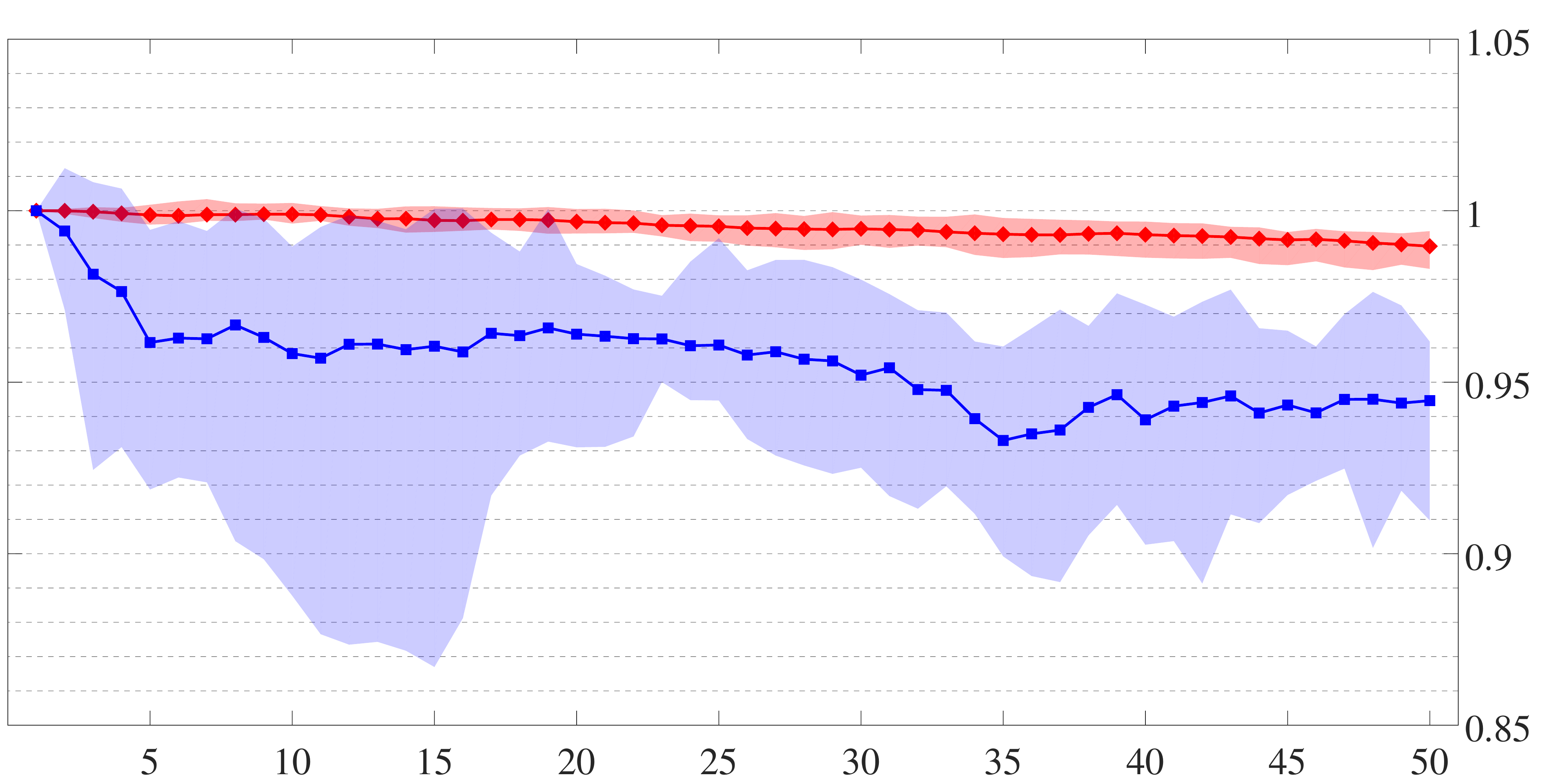}
		\caption{$p = 10$}
		\label{rnd10}
	\end{subfigure}
	\begin{subfigure}{.49\linewidth}
		\includegraphics[width=\linewidth]{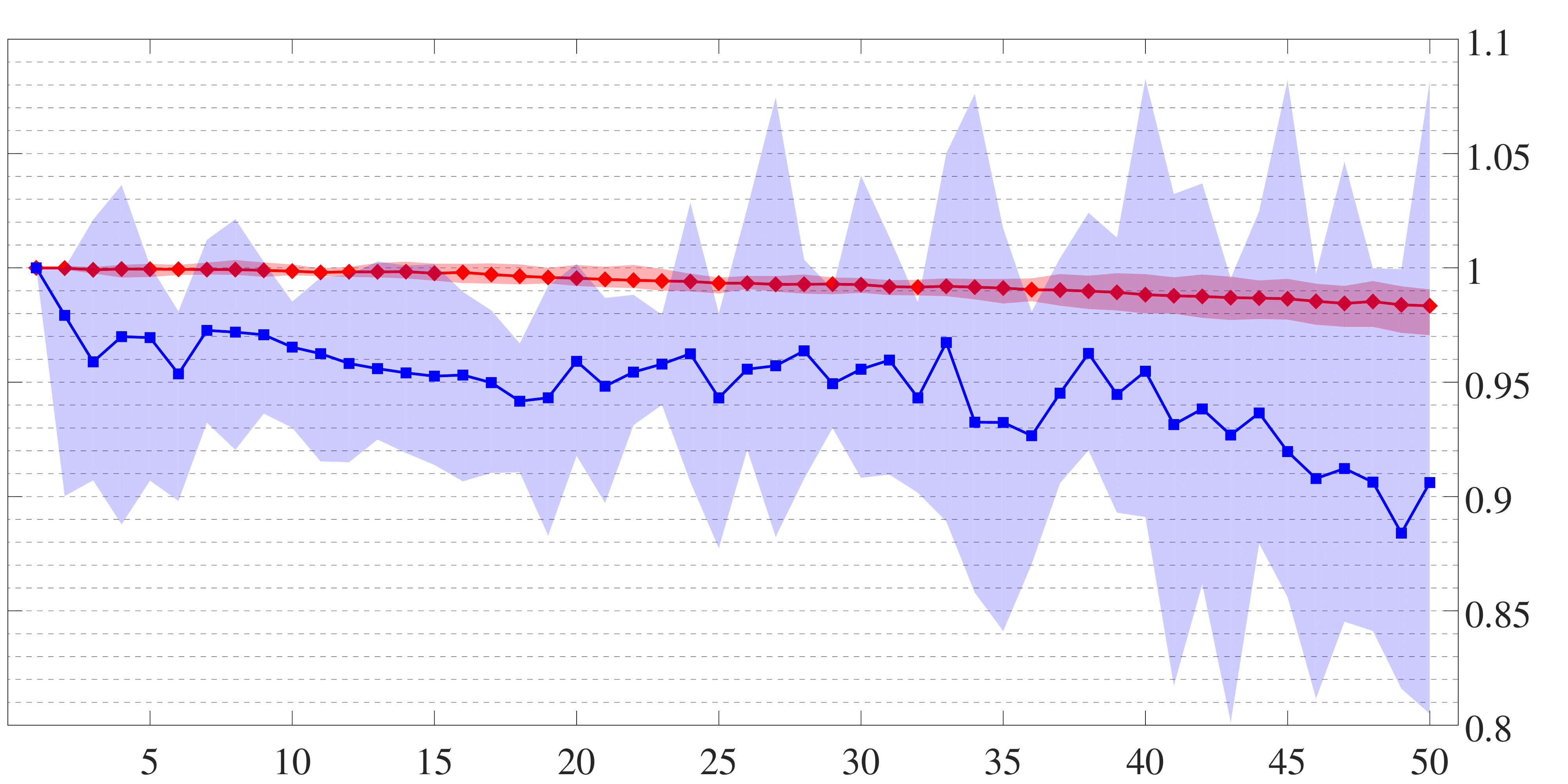}
		\caption{$p = \infty$}
		\label{rndinf}
	\end{subfigure}
	\caption{OGA/NGA approximation error ratio on a randomly generated data in $\ell_p$-norm with different values of $p$, distributed over $10$ simulations with $N_h = 1000,\ d = 100,\ N_{tr} = 1000$.}
	\label{fig:rnd}
\end{figure}

\subsection{Reduced bases in various $\boldsymbol{p}$-norms}
In the following examples we consider the problem of approximating a parametric family of functions.
More specifically, we are given a parametric family of functions $\mathcal{F}(x,\mu) \subset L_p(\Omega \times \mathcal{D})$ that depends on the parameter $\mu \in \mathcal{D} \subset \mathbb{R}^d$, $d \in \mathbb{N}$.
We then discretize the spatial domain $\Omega$ at $N_h$ uniform points and uniformly sample the parametric domain $\mathcal{D}$ at $N_{tr}$ points $\{\mu_1,\ldots,\mu_{N_{tr}}\}$ to form the discrete training set $\mathcal{F}_{tr}(x) = \{\mathcal{F}(x,\mu_n)\}_{n=1}^{N_{tr}}$ in $\ell_p^{(N_h)}$~--- a discretized version of the space $L_p(\Omega)$~--- and use it to construct the reduced bases.
The approximation error and the `quality' of each of the resulting reduced bases is calculated and presented.

In this subsection we observe the performance of the reduced bases constructed by OGA, NGA, EIM, and POD with respect to the $p$-norm for different values of $p$.
Due to the fact that OGA and NGA coincide when $p = 2$, and NGA and EIM coincide when $p = \infty$, we consider the values $p = 1, 3, 10$.

\begin{figure}
    \includegraphics[width=.49\linewidth]{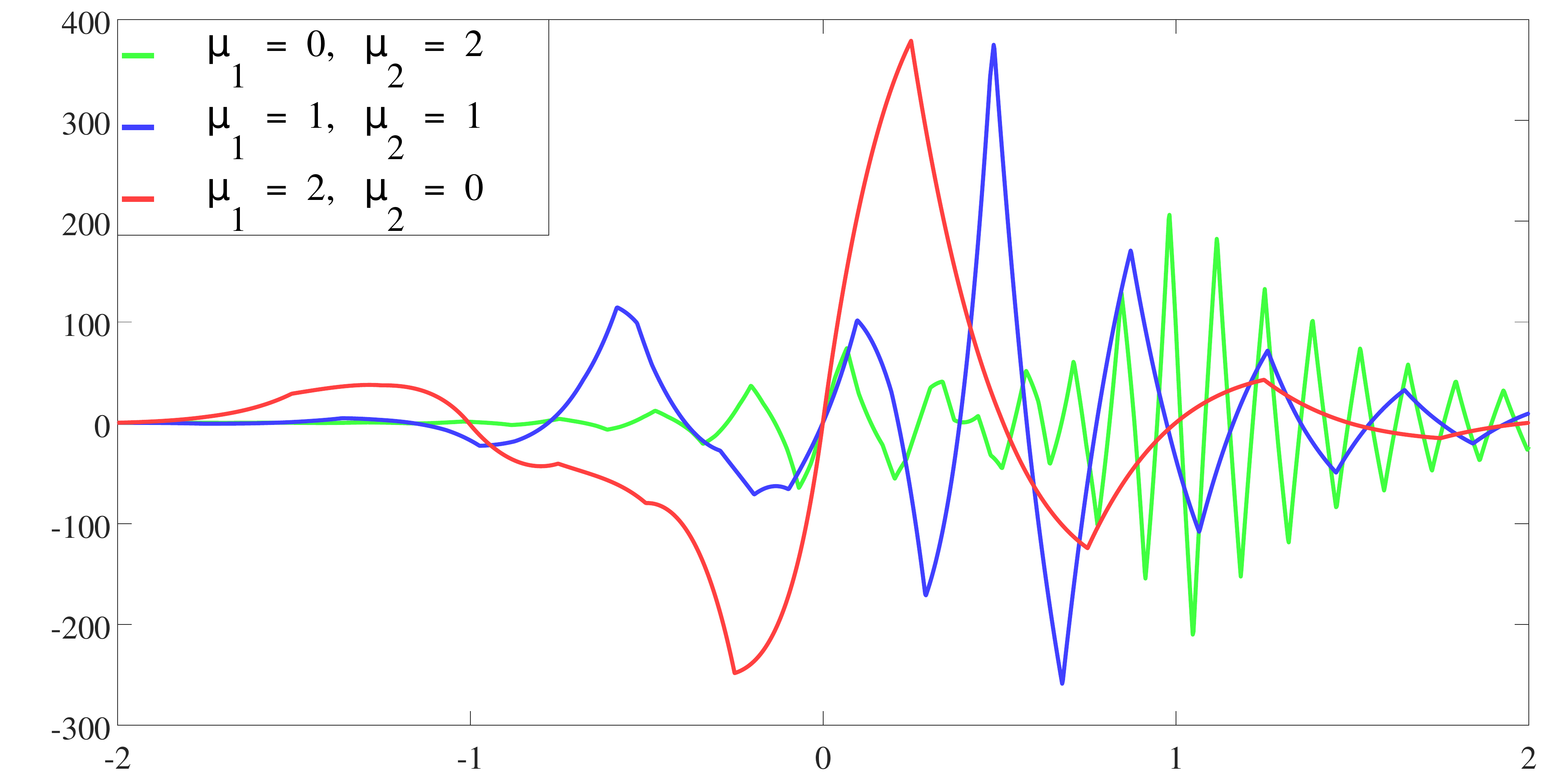}
    \includegraphics[width=.49\linewidth]{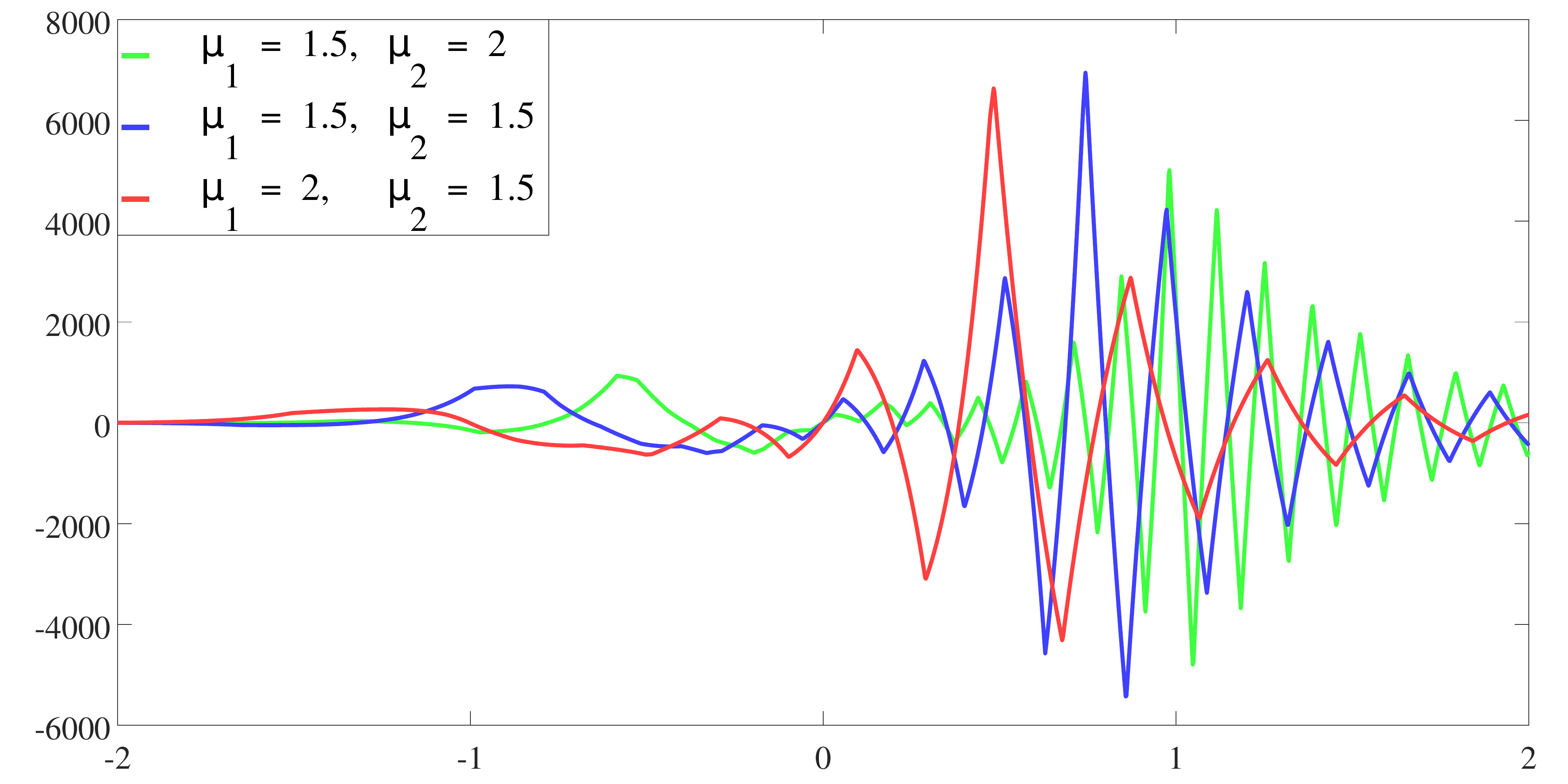}
    \caption{Elements of the parametric family $\mathcal{F}(x,\mu_1,\mu_2)$ defined by~\eqref{15d_F_tr}.}
    \label{fig:F_tr_parametric}
\end{figure}

\begin{figure}
    \hfill
    \begin{subfigure}{.49\linewidth}
        \resizebox{.9\linewidth}{!}{%
            \begin{tabular}{|c|c|c|c|c|}
                \hline$m$ & OGA & NGA & EIM & POD \\\hline
                3 & 2.482e-02 & 2.482e-02 & 2.518e-02 & 2.502e-02 \\\hline
                6 & 1.666e-02 & 1.666e-02 & 1.693e-02 & 1.722e-02 \\\hline
                9 & 1.166e-02 & 1.166e-02 & 1.159e-02 & 1.144e-02 \\\hline
                12 & 8.598e-03 & 8.598e-03 & 8.536e-03 & 8.238e-03 \\\hline
                15 & 6.560e-03 & 6.560e-03 & 6.393e-03 & 4.980e-03 \\\hline
                18 & 3.368e-03 & 3.368e-03 & 5.361e-03 & 3.117e-03 \\\hline
                21 & 1.811e-03 & 1.810e-03 & 2.581e-03 & 1.724e-03 \\\hline
                24 & 1.126e-03 & 9.912e-04 & 1.226e-03 & 8.468e-04 \\\hline
                27 & 6.041e-04 & 6.153e-04 & 6.430e-04 & 5.176e-04 \\\hline
                30 & 3.754e-04 & 3.673e-04 & 5.095e-04 & 3.505e-04 \\\hline
            \end{tabular}}
        \caption{average approximation error}
    \end{subfigure}
    \begin{subfigure}{.49\linewidth}
        \resizebox{.9\linewidth}{!}{%
            \begin{tabular}{|c|c|c|c|c|}
                \hline$m$ & OGA & NGA & EIM & POD \\\hline
                3 & 5.355e-01 & 5.355e-01 & 6.712e-01 & 3.707e-01 \\\hline
                6 & 3.143e-01 & 3.143e-01 & 4.327e-01 & 2.120e-01 \\\hline
                9 & 1.935e-01 & 1.935e-01 & 2.120e-01 & 1.080e-01 \\\hline
                12 & 1.025e-01 & 1.025e-01 & 9.914e-02 & 8.294e-02 \\\hline
                15 & 5.589e-02 & 5.589e-02 & 4.929e-02 & 4.238e-02 \\\hline
                18 & 3.046e-02 & 3.046e-02 & 4.921e-02 & 2.389e-02 \\\hline
                21 & 1.417e-02 & 1.417e-02 & 2.368e-02 & 1.282e-02 \\\hline
                24 & 9.037e-03 & 9.103e-03 & 9.398e-03 & 5.914e-03 \\\hline
                27 & 4.664e-03 & 4.847e-03 & 4.517e-03 & 2.434e-03 \\\hline
                30 & 2.911e-03 & 2.936e-03 & 3.493e-03 & 1.768e-03 \\\hline
            \end{tabular}}
        \caption{maximal approximation error}
    \end{subfigure}
    \begin{subfigure}{.49\linewidth}
        \includegraphics[width=\linewidth]{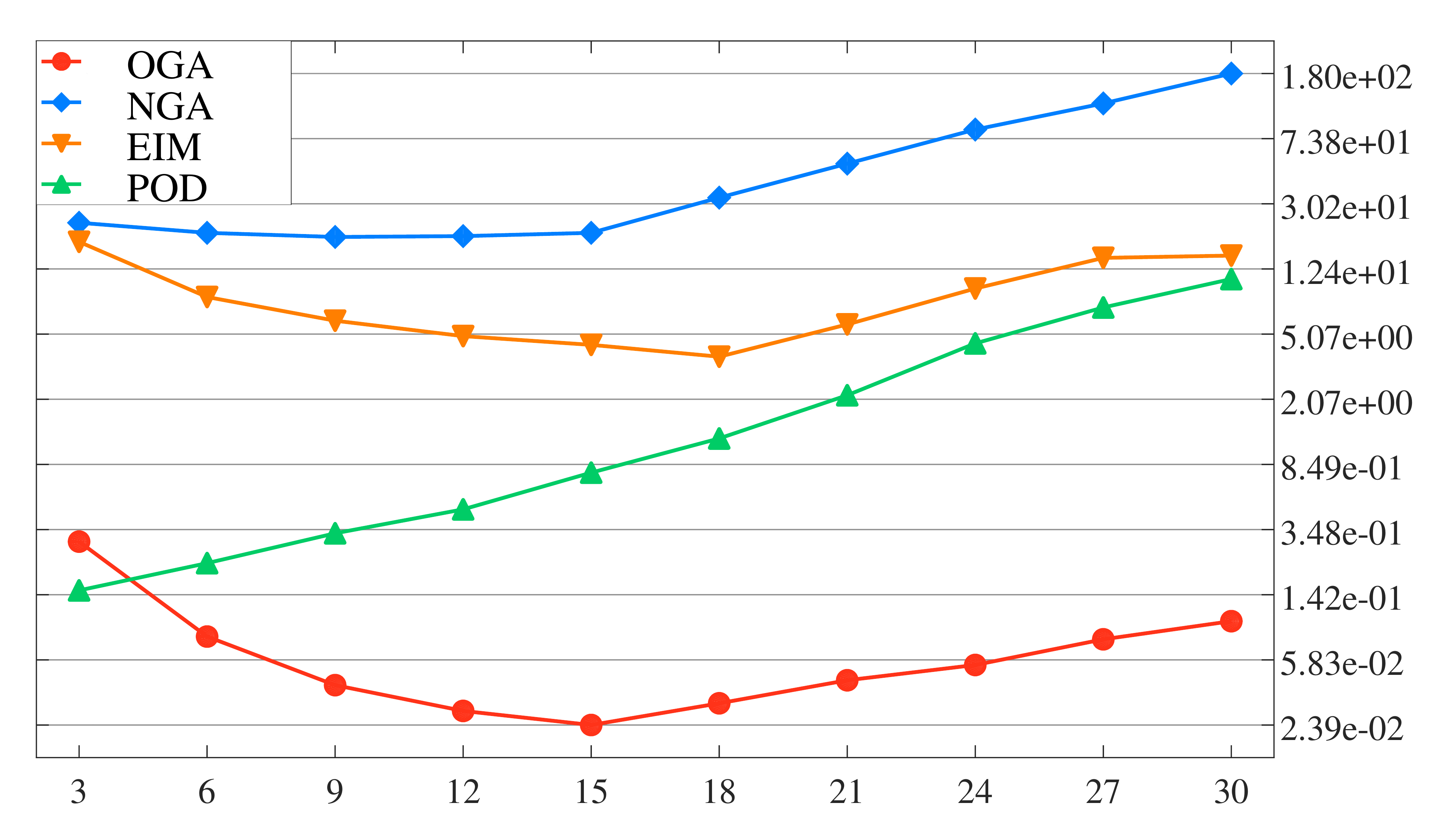}
        \caption{average quality}
    \end{subfigure}
    \begin{subfigure}{.49\linewidth}
        \includegraphics[width=\linewidth]{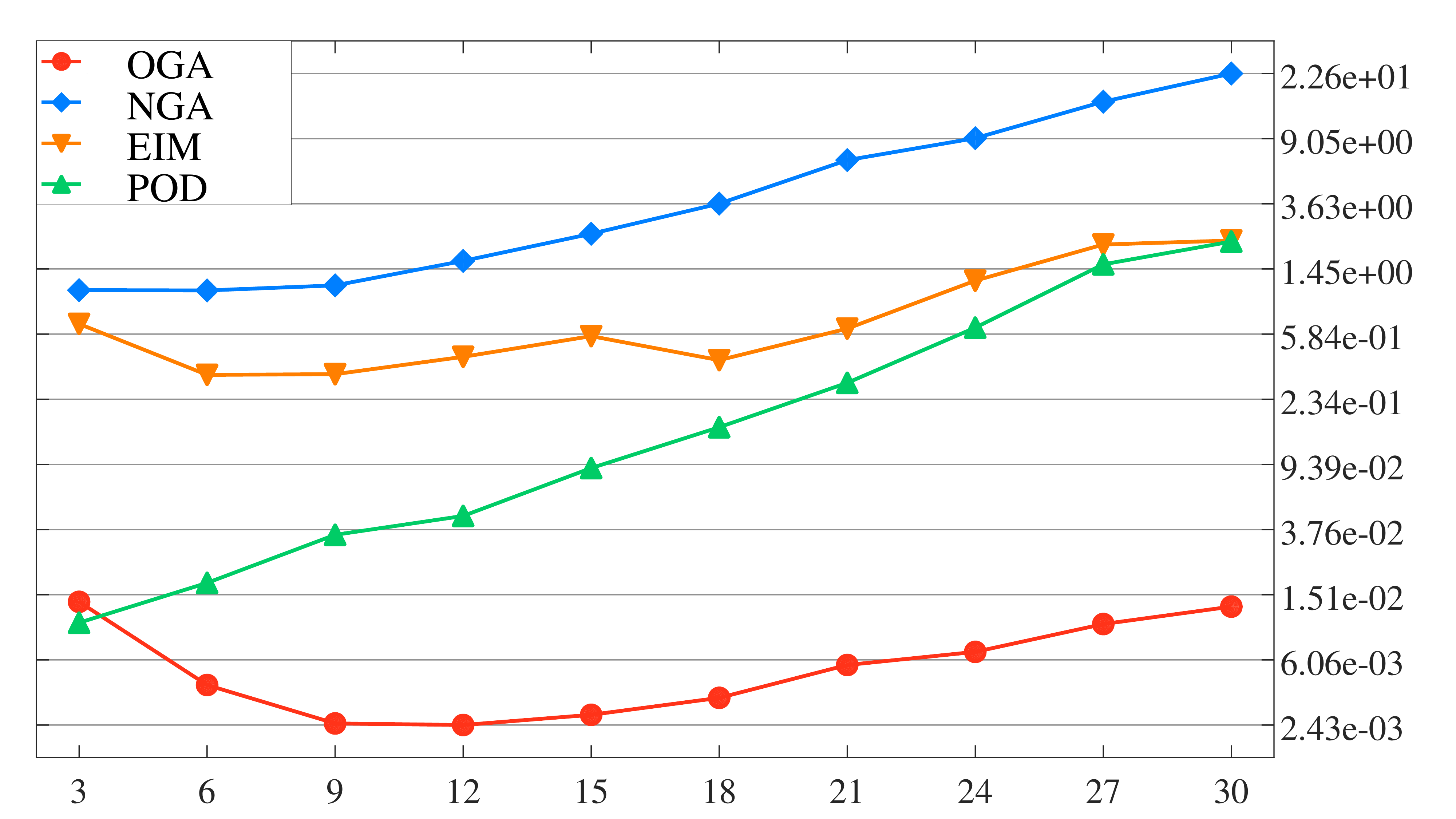}
        \caption{minimal quality}
    \end{subfigure}
    \caption{Performance of the OGA, NGA, EIM, and POD reduced bases for approximating the set $\mathcal{F}_{tr}$ (sampled from the parametric family~\eqref{15d_F_tr}) in $L_1[-2,2]$.}
    \label{15d1}
\end{figure}

\begin{figure}
    \hfill
    \begin{subfigure}{.49\linewidth}
        \resizebox{.9\linewidth}{!}{%
            \begin{tabular}{|c|c|c|c|c|}
                \hline$m$ & OGA & NGA & EIM & POD \\\hline
                3 & 1.981e-02 & 1.981e-02 & 2.074e-02 & 1.938e-02 \\\hline
                6 & 1.373e-02 & 1.381e-02 & 1.387e-02 & 1.236e-02 \\\hline
                9 & 9.833e-03 & 9.833e-03 & 9.428e-03 & 8.709e-03 \\\hline
                12 & 7.173e-03 & 7.173e-03 & 7.119e-03 & 5.994e-03 \\\hline
                15 & 5.527e-03 & 5.527e-03 & 5.359e-03 & 3.738e-03 \\\hline
                18 & 2.752e-03 & 2.752e-03 & 4.582e-03 & 2.264e-03 \\\hline
                21 & 1.548e-03 & 1.548e-03 & 2.070e-03 & 1.296e-03 \\\hline
                24 & 1.038e-03 & 1.165e-03 & 1.202e-03 & 7.577e-04 \\\hline
                27 & 7.000e-04 & 7.000e-04 & 6.965e-04 & 4.831e-04 \\\hline
                30 & 4.309e-04 & 4.309e-04 & 5.575e-04 & 3.303e-04 \\\hline
            \end{tabular}}
        \caption{average approximation error}
    \end{subfigure}
    \begin{subfigure}{.49\linewidth}
        \resizebox{.9\linewidth}{!}{%
            \begin{tabular}{|c|c|c|c|c|}
                \hline$m$ & OGA & NGA & EIM & POD \\\hline
                3 & 3.436e-01 & 3.436e-01 & 4.479e-01 & 3.074e-01 \\\hline
                6 & 2.247e-01 & 2.427e-01 & 2.824e-01 & 1.367e-01 \\\hline
                9 & 1.405e-01 & 1.405e-01 & 1.560e-01 & 7.830e-02 \\\hline
                12 & 8.080e-02 & 8.080e-02 & 8.126e-02 & 5.329e-02 \\\hline
                15 & 4.461e-02 & 4.461e-02 & 4.423e-02 & 3.054e-02 \\\hline
                18 & 2.831e-02 & 2.831e-02 & 4.418e-02 & 1.759e-02 \\\hline
                21 & 1.477e-02 & 1.477e-02 & 1.724e-02 & 9.377e-03 \\\hline
                24 & 8.611e-03 & 8.842e-03 & 9.223e-03 & 5.464e-03 \\\hline
                27 & 4.548e-03 & 4.548e-03 & 5.278e-03 & 2.647e-03 \\\hline
                30 & 3.342e-03 & 3.342e-03 & 3.967e-03 & 1.727e-03 \\\hline
            \end{tabular}}
        \caption{maximal approximation error}
    \end{subfigure}
    \begin{subfigure}{.49\linewidth}
        \includegraphics[width=\linewidth]{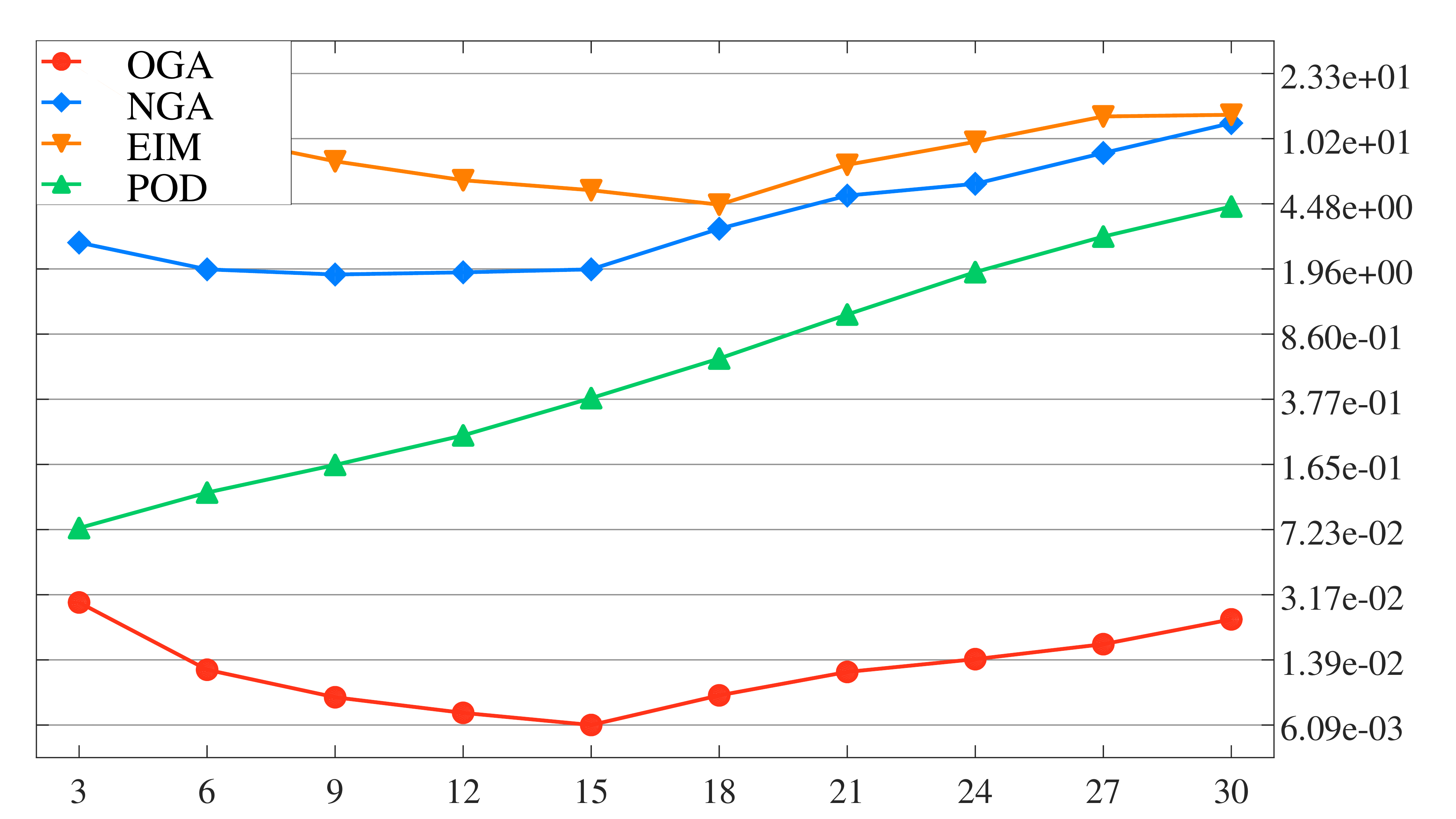}
        \caption{average quality}
    \end{subfigure}
    \begin{subfigure}{.49\linewidth}
        \includegraphics[width=\linewidth]{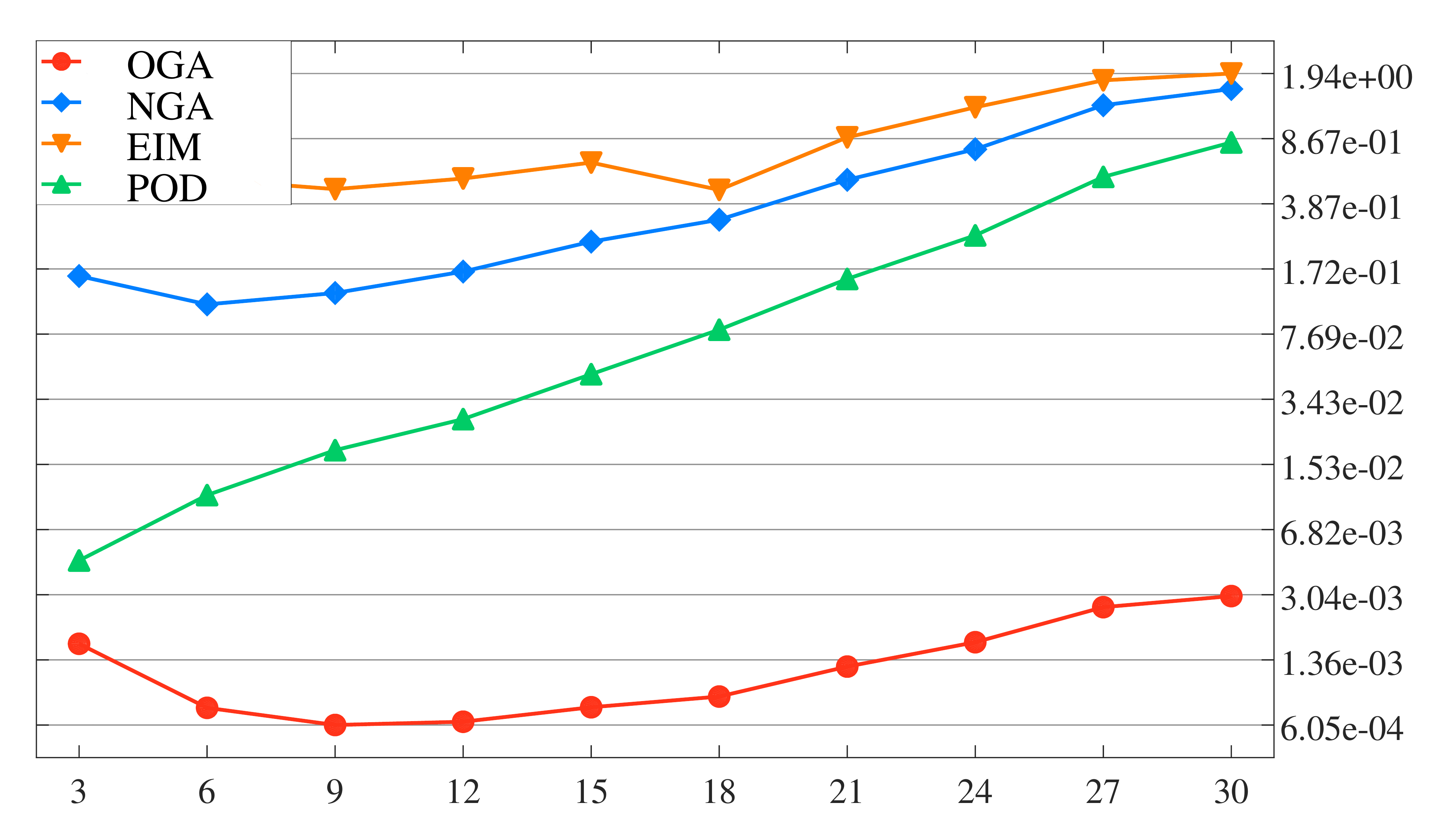}
        \caption{minimal quality}
    \end{subfigure}
    \caption{Performance of the OGA, NGA, EIM, and POD reduced bases for approximating the set $\mathcal{F}_{tr}$ (sampled from the parametric family~\eqref{15d_F_tr}) in $L_3[-2,2]$.}
    \label{15d3}
\end{figure}

\begin{figure}
    \hfill
    \begin{subfigure}{.49\linewidth}
        \resizebox{.9\linewidth}{!}{%
            \begin{tabular}{|c|c|c|c|c|}
                \hline$m$ & OGA & NGA & EIM & POD \\\hline
                3 & 1.822e-02 & 1.822e-02 & 1.822e-02 & 1.685e-02 \\\hline
                6 & 1.098e-02 & 1.100e-02 & 1.105e-02 & 1.007e-02 \\\hline
                9 & 7.359e-03 & 7.425e-03 & 7.235e-03 & 6.790e-03 \\\hline
                12 & 5.162e-03 & 5.107e-03 & 5.366e-03 & 4.411e-03 \\\hline
                15 & 4.193e-03 & 3.980e-03 & 4.060e-03 & 2.696e-03 \\\hline
                18 & 2.318e-03 & 2.290e-03 & 3.449e-03 & 1.567e-03 \\\hline
                21 & 1.151e-03 & 1.234e-03 & 1.487e-03 & 9.082e-04 \\\hline
                24 & 7.819e-04 & 9.499e-04 & 9.108e-04 & 5.816e-04 \\\hline
                27 & 5.322e-04 & 5.177e-04 & 5.667e-04 & 3.859e-04 \\\hline
                30 & 3.846e-04 & 3.569e-04 & 4.490e-04 & 2.591e-04 \\\hline
            \end{tabular}}
        \caption{average approximation error}
    \end{subfigure}
    \begin{subfigure}{.49\linewidth}
        \resizebox{.9\linewidth}{!}{%
            \begin{tabular}{|c|c|c|c|c|}
                \hline$m$ & OGA & NGA & EIM & POD \\\hline
                3 & 3.268e-01 & 3.268e-01 & 3.268e-01 & 2.451e-01 \\\hline
                6 & 1.671e-01 & 1.819e-01 & 1.929e-01 & 1.090e-01 \\\hline
                9 & 1.054e-01 & 1.194e-01 & 1.085e-01 & 5.987e-02 \\\hline
                12 & 6.762e-02 & 6.704e-02 & 5.834e-02 & 3.586e-02 \\\hline
                15 & 3.459e-02 & 3.770e-02 & 3.460e-02 & 2.129e-02 \\\hline
                18 & 2.054e-02 & 2.054e-02 & 3.456e-02 & 1.341e-02 \\\hline
                21 & 1.179e-02 & 1.141e-02 & 1.191e-02 & 7.064e-03 \\\hline
                24 & 5.508e-03 & 7.440e-03 & 7.312e-03 & 4.243e-03 \\\hline
                27 & 3.434e-03 & 3.552e-03 & 4.462e-03 & 2.259e-03 \\\hline
                30 & 2.382e-03 & 2.678e-03 & 3.265e-03 & 1.512e-03 \\\hline
             \end{tabular}}
        \caption{maximal approximation error}
    \end{subfigure}
    \begin{subfigure}{.49\linewidth}
        \includegraphics[width=\linewidth]{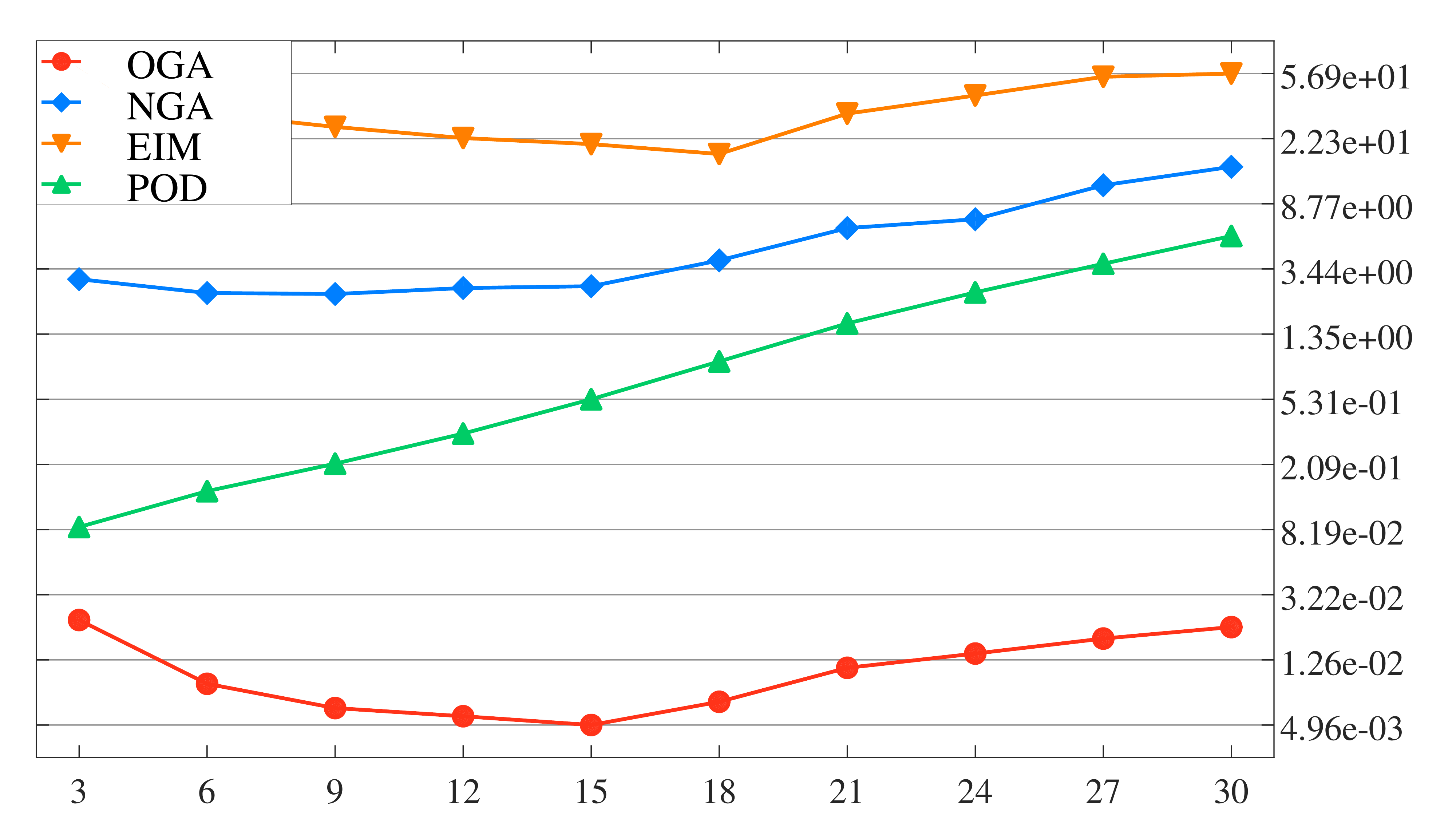}
        \caption{average quality}
    \end{subfigure}
    \begin{subfigure}{.49\linewidth}
        \includegraphics[width=\linewidth]{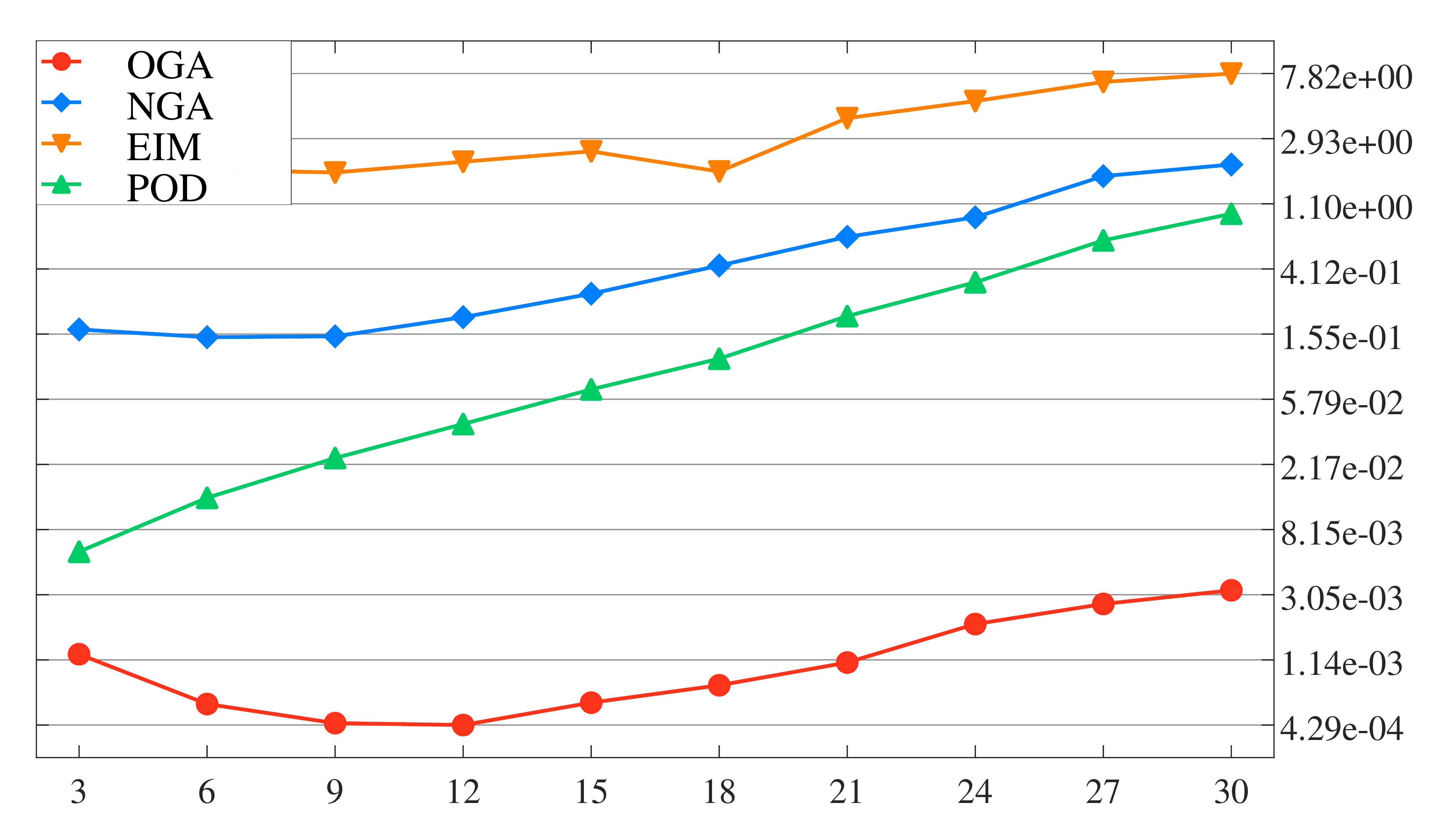}
        \caption{minimal quality}
    \end{subfigure}
    \caption{Performance of the OGA, NGA, EIM, and POD reduced bases for approximating the set $\mathcal{F}_{tr}$ (sampled from the parametric family~\eqref{15d_F_tr}) in $L_{10}[-2,2]$.}
    \label{15d10}
\end{figure}

Consider the parametric family $\mathcal{F} \subset C(\Omega \times \mathcal{D})$ given by
\begin{align}
    \label{15d_F_tr}
	\mathcal{F}(x,\mu_1,\mu_2) = e^{x + 2\mu_1 + 3\mu_2}
	    \bigg( &\arcsin\Big( \sin \big( 2\pi e^{\mu_1} \, x \big) \Big) \, e^{-\pi \left|x-\frac{\mu_1}{2}\right|}
    \\
    \nonumber
	&\ + \arcsin\Big( \sin \big( e^{\pi-\mu_2} \, x \big) \Big) \, e^{-\pi \left| x + \frac{\mu_2}{2} \right|} \bigg),
\end{align}
where $x \in \Omega = [-2,2]$ and $(\mu_1,\mu_2) \in \mathcal{D} = [0,2] \times [0,2]$.
Some elements of the family $\mathcal{F}$ are presented in Figure~\ref{fig:F_tr_parametric}.
We generate the discrete training set $\mathcal{F}_{tr}$ by sampling the spatial domain $\Omega$ at $N_h = 100,000$ uniform points, and the parametric domain $\mathcal{D}$ on the uniform grid of the size $N_{tr} = 32 \times 32$.
Approximation errors and the quality of reduced bases for each of the cases $p = 1,3,10$ are reported in Figures~\ref{15d1}, \ref{15d3}, and~\ref{15d10} respectively.

We note that in the case $p = 1$, presented in Figure~\ref{15d1}, all algorithms perform similarly in terms of approximation accuracy, however the computational complexity of NGA is the smallest (even smaller than that of EIM) due to the simplicity of norming functionals in case $p = 1$ given by the formula~\eqref{norming_functional_l1}.

In case $p = 3$, presented in Figure~\ref{15d3}, the approximation accuracy of the reduced bases is still similar, however in terms of `quality' NGA is slightly outperformed by EIM.

Note that both OGA and NGA construct the same reduced basis for this problem setting due to the fact that the geometry of the space $L_3[-2,2]$ is sufficiently close to that of the Hilbert space $L_2[-2,2]$.
In case $p = 10$, presented in Figure~\ref{15d10}, NGA loses in `quality' to both EIM and POD, likely due to the more complex computation of norming functionals.

\subsection{Reduced bases for perturbed data}
In this subsection we demonstrate how the reduced basis algorithms are affected by the `noisy' data.
Namely, in each of the following examples we construct reduced bases for a parametric family and for a perturbed version of the same family, and observe how the various perturbations affect the performance of algorithms.
Such formulation is natural in practical applications since the available data is typically measured not precisely but with some inaccuracies.
In particular, we will see that greedy algorithms (OGA and NGA) appear to be more robust to perturbations in terms of approximation accuracy.
\\
As shown in the previous subsection, the approximation properties of the reduced bases are not changed significantly for the various values of $p$; thus in this section we restrict ourselves to the case $p = 1$ for the simplicity of calculating the approximation error.

\subsubsection*{One-dimensional parametric family}
\begin{figure}
    \includegraphics[width=.49\linewidth]{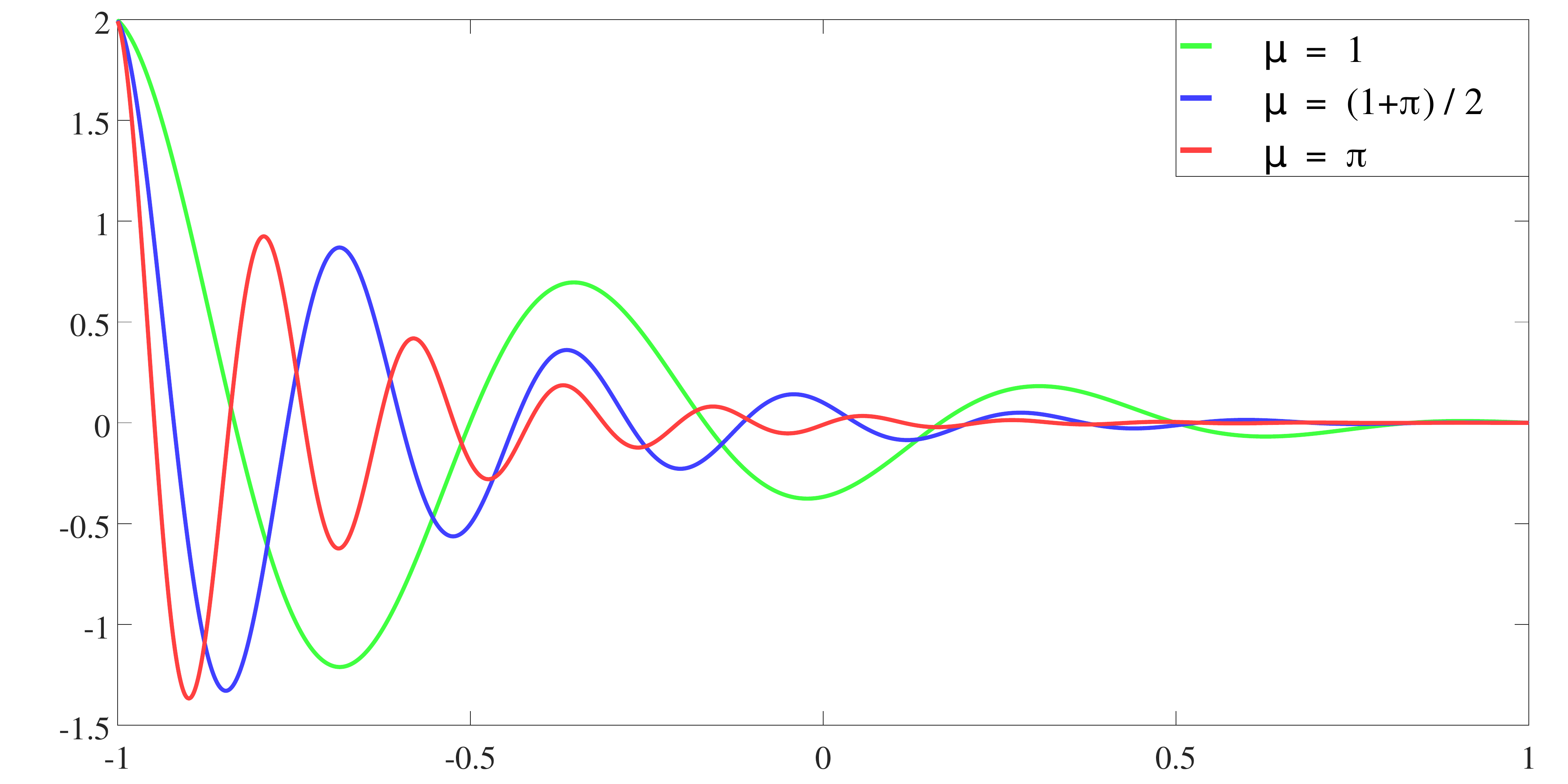}
    \includegraphics[width=.49\linewidth]{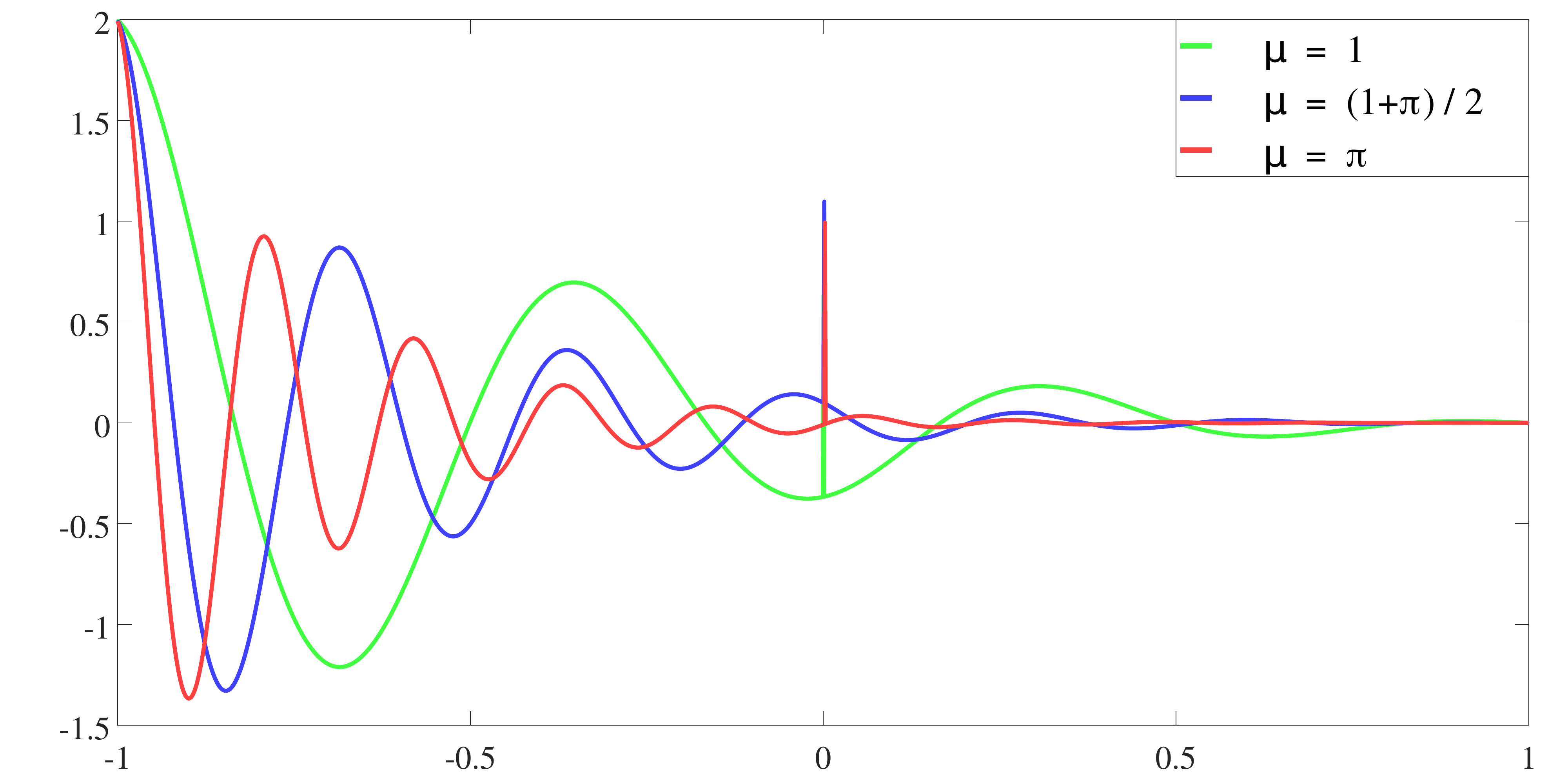}
    \caption{Elements of the families $\mathcal{F}^1(x,\mu)$ (left) and $\widetilde{\mathcal{F}}^1(x,\mu)$ (right), defined by~\eqref{1d_F_tr} and~\eqref{1dn_F_tr} respectively.}
    \label{fig:1d1_F_tr}
\end{figure}

\begin{figure}
    \hfill
    \begin{subfigure}{.49\linewidth}
        \resizebox{.9\linewidth}{!}{%
            \begin{tabular}{|c|c|c|c|c|}
                \hline$m$ & OGA & NGA & EIM & POD \\\hline
                3 & 3.798e-01 & 3.816e-01 & 3.900e-01 & 2.641e-01 \\\hline
                6 & 1.227e-01 & 1.295e-01 & 1.288e-01 & 8.637e-02 \\\hline
                9 & 3.382e-02 & 3.404e-02 & 3.160e-02 & 2.310e-02 \\\hline
                12 & 7.713e-03 & 7.179e-03 & 6.639e-03 & 4.292e-03 \\\hline
                15 & 7.077e-04 & 6.418e-04 & 5.332e-04 & 4.034e-04 \\\hline
                18 & 2.168e-05 & 2.715e-05 & 3.411e-05 & 1.635e-05 \\\hline
                21 & 4.153e-07 & 4.941e-07 & 4.024e-07 & 3.154e-07 \\\hline
                24 & 8.191e-09 & 4.488e-09 & 9.883e-09 & 2.982e-09 \\\hline
                27 & 3.510e-11 & 2.533e-11 & 3.364e-11 & 1.605e-11 \\\hline
                30 & 2.238e-14 & 3.040e-14 & 2.349e-14 & 2.742e-14 \\\hline
            \end{tabular}}
        \caption{average approximation error}
    \end{subfigure}
    \begin{subfigure}{.49\linewidth}
        \resizebox{.9\linewidth}{!}{%
            \begin{tabular}{|c|c|c|c|c|}
                \hline$m$ & OGA & NGA & EIM & POD \\\hline
                3 & 5.831e-01 & 5.860e-01 & 7.482e-01 & 6.205e-01 \\\hline
                6 & 2.940e-01 & 2.756e-01 & 2.898e-01 & 2.617e-01 \\\hline
                9 & 1.288e-01 & 1.284e-01 & 1.104e-01 & 8.285e-02 \\\hline
                12 & 1.966e-02 & 1.963e-02 & 1.780e-02 & 1.797e-02 \\\hline
                15 & 2.780e-03 & 2.235e-03 & 2.176e-03 & 2.134e-03 \\\hline
                18 & 1.082e-04 & 9.846e-05 & 1.914e-04 & 1.002e-04 \\\hline
                21 & 2.492e-06 & 2.649e-06 & 1.868e-06 & 2.093e-06 \\\hline
                24 & 4.994e-08 & 3.349e-08 & 6.512e-08 & 1.929e-08 \\\hline
                27 & 2.199e-10 & 1.174e-10 & 1.264e-10 & 9.134e-11 \\\hline
                30 & 1.829e-13 & 1.442e-13 & 1.017e-13 & 1.460e-13 \\\hline
            \end{tabular}}
        \caption{maximal approximation error}
    \end{subfigure}
    \begin{subfigure}{.49\linewidth}
        \includegraphics[width=\linewidth]{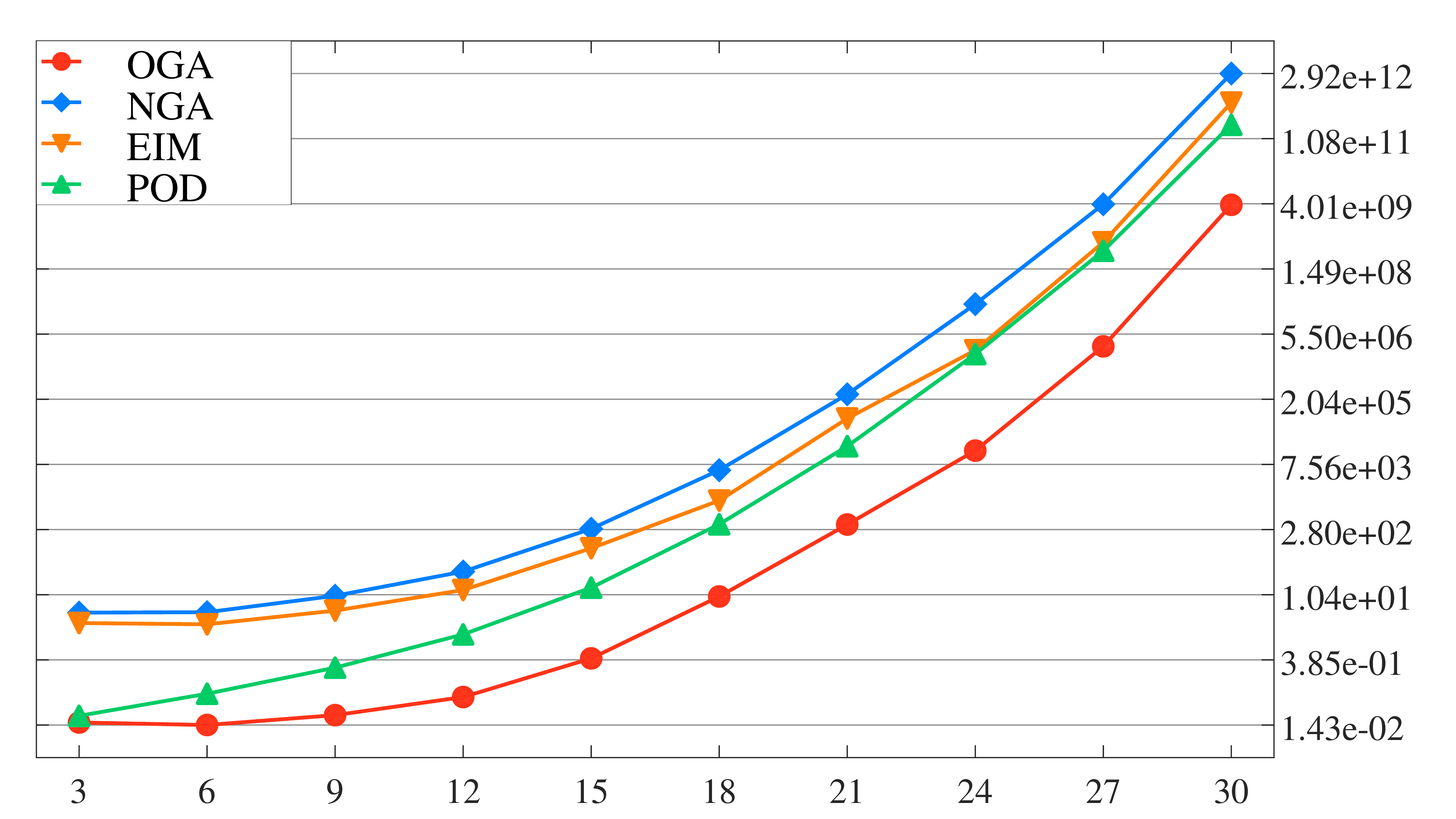}
        \caption{average quality}
    \end{subfigure}
    \begin{subfigure}{.49\linewidth}
        \includegraphics[width=\linewidth]{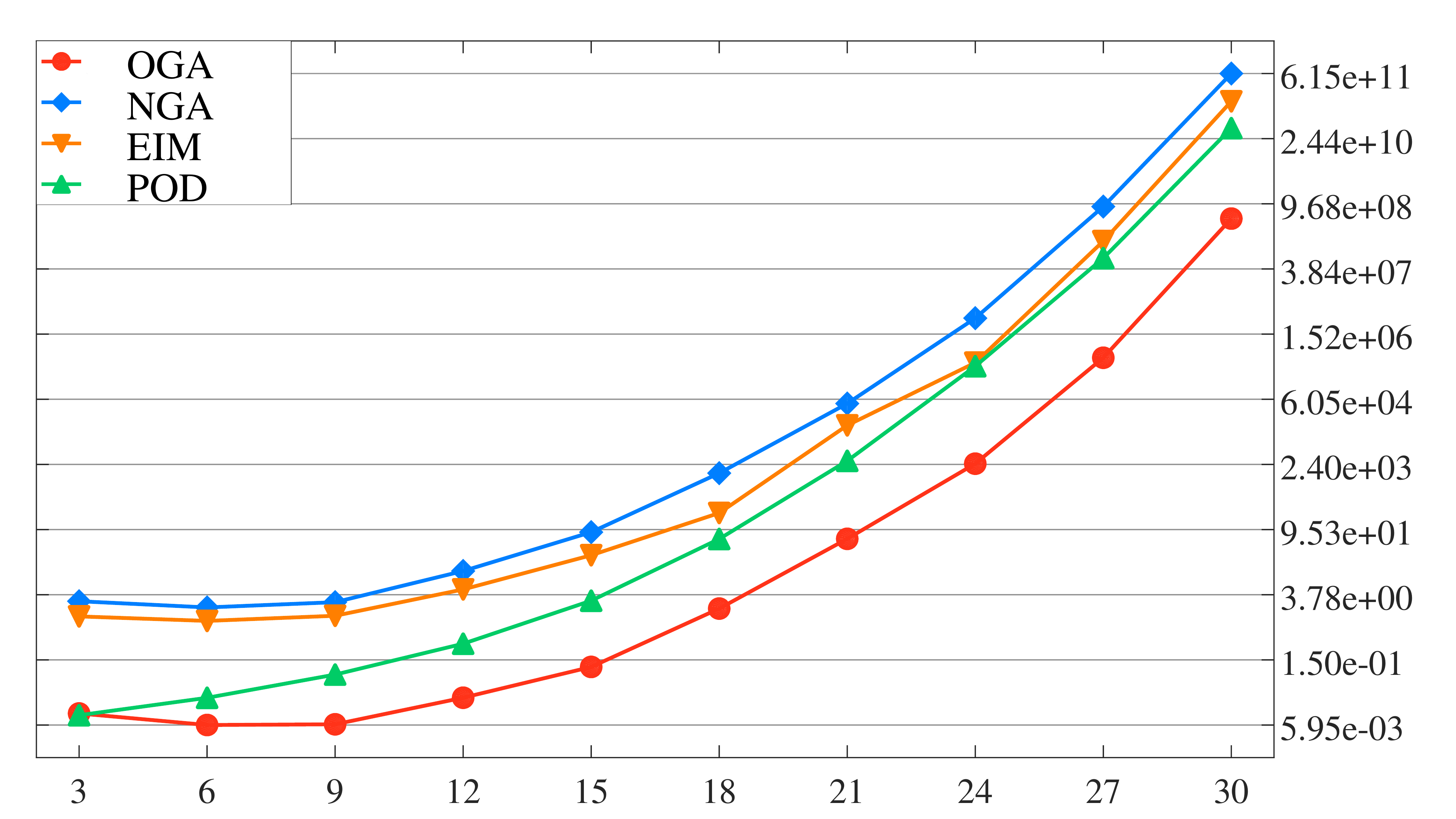}
        \caption{minimal quality}
    \end{subfigure}
    \caption{Performance of the OGA, NGA, EIM, and POD reduced bases for approximating the set $\mathcal{F}^1_{tr}$ (sampled from the one-dimensional parametric family~\eqref{1d_F_tr}) in $L_1[-1,1]$.}
    \label{fig:1d1_appr_qual}
\end{figure}

\begin{figure}
    \hfill
    \begin{subfigure}{.49\linewidth}
        \resizebox{.9\linewidth}{!}{%
            \begin{tabular}{|c|c|c|c|c|}
                \hline$m$ & OGA & NGA & EIM & POD \\\hline
                3 & 3.800e-01 & 3.812e-01 & 3.905e-01 & 2.644e-01 \\\hline
                6 & 1.235e-01 & 1.292e-01 & 1.764e-01 & 8.658e-02 \\\hline
                9 & 3.434e-02 & 3.486e-02 & 5.051e-02 & 2.338e-02 \\\hline
                12 & 8.113e-03 & 7.491e-03 & 3.661e-02 & 4.534e-03 \\\hline
                15 & 9.584e-04 & 1.024e-03 & 3.564e-02 & 7.028e-04 \\\hline
                18 & 3.055e-04 & 3.461e-04 & 1.330e-02 & 5.182e-04 \\\hline
                21 & 1.968e-04 & 2.082e-04 & 6.758e-03 & 2.302e-04 \\\hline
                24 & 1.543e-04 & 1.591e-04 & 5.532e-03 & 2.103e-04 \\\hline
                27 & 1.268e-04 & 1.308e-04 & 5.067e-03 & 1.797e-04 \\\hline
                30 & 1.082e-04 & 1.157e-04 & 4.830e-03 & 1.658e-04 \\\hline
            \end{tabular}}
        \caption{average approximation error}
    \end{subfigure}
    \begin{subfigure}{.49\linewidth}
        \resizebox{.9\linewidth}{!}{%
            \begin{tabular}{|c|c|c|c|c|}
                \hline$m$ & OGA & NGA & EIM & POD \\\hline
                3 & 5.832e-01 & 5.852e-01 & 7.484e-01 & 6.208e-01 \\\hline
                6 & 2.948e-01 & 2.682e-01 & 3.932e-01 & 2.622e-01 \\\hline
                9 & 1.323e-01 & 1.340e-01 & 1.240e-01 & 8.362e-02 \\\hline
                12 & 2.038e-02 & 1.976e-02 & 1.132e-01 & 1.910e-02 \\\hline
                15 & 2.940e-03 & 3.311e-03 & 1.132e-01 & 3.004e-03 \\\hline
                18 & 9.030e-04 & 1.230e-03 & 6.363e-02 & 2.459e-03 \\\hline
                21 & 5.403e-04 & 5.341e-04 & 3.087e-02 & 1.113e-03 \\\hline
                24 & 3.561e-04 & 3.901e-04 & 2.949e-02 & 8.831e-04 \\\hline
                27 & 2.793e-04 & 2.990e-04 & 2.013e-02 & 6.987e-04 \\\hline
                30 & 2.532e-04 & 2.960e-04 & 1.975e-02 & 5.394e-04 \\\hline
            \end{tabular}}
        \caption{maximal approximation error}
    \end{subfigure}
    \begin{subfigure}{.49\linewidth}
        \includegraphics[width=\linewidth]{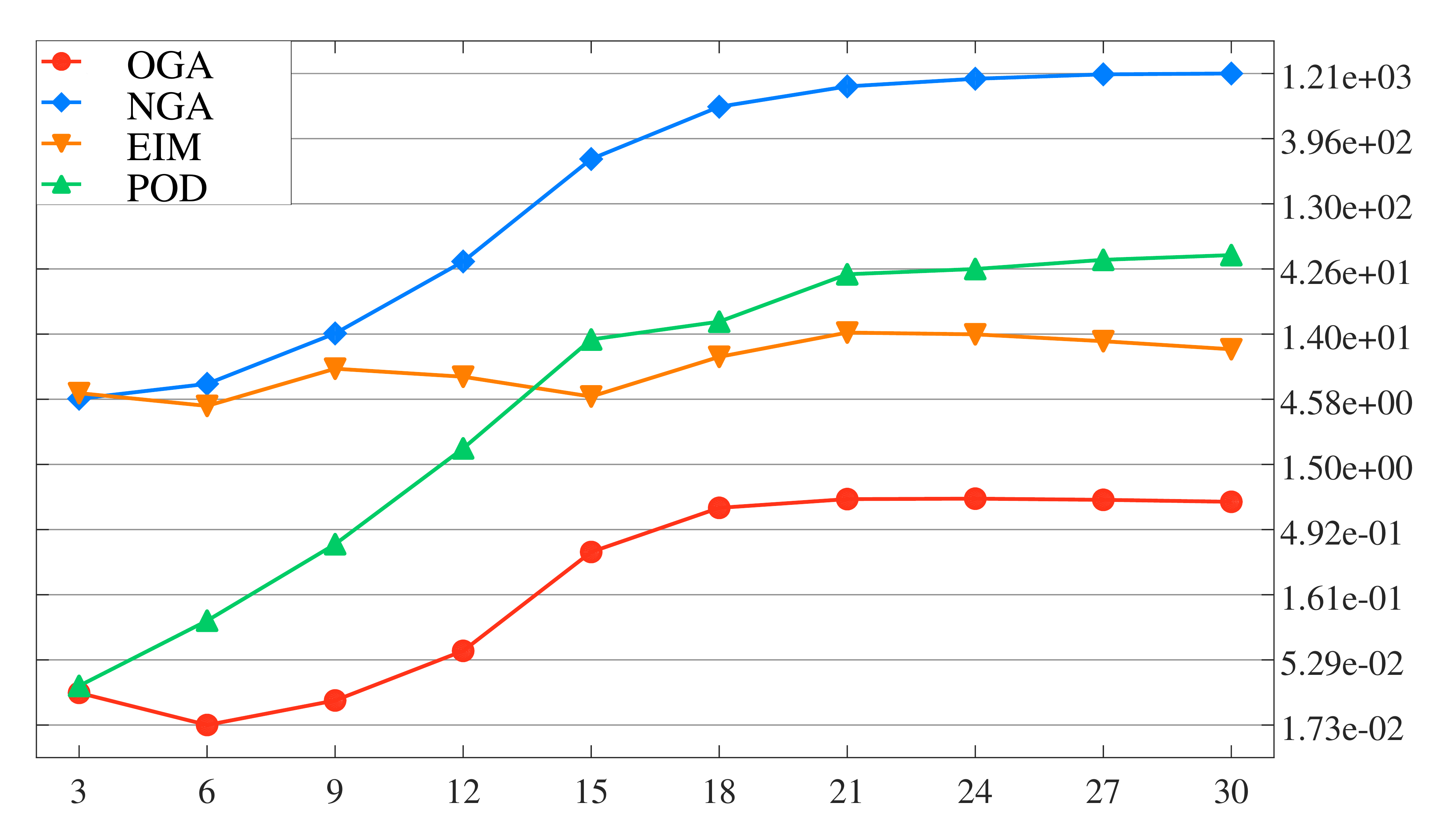}
        \caption{average quality}
    \end{subfigure}
    \begin{subfigure}{.49\linewidth}
        \includegraphics[width=\linewidth]{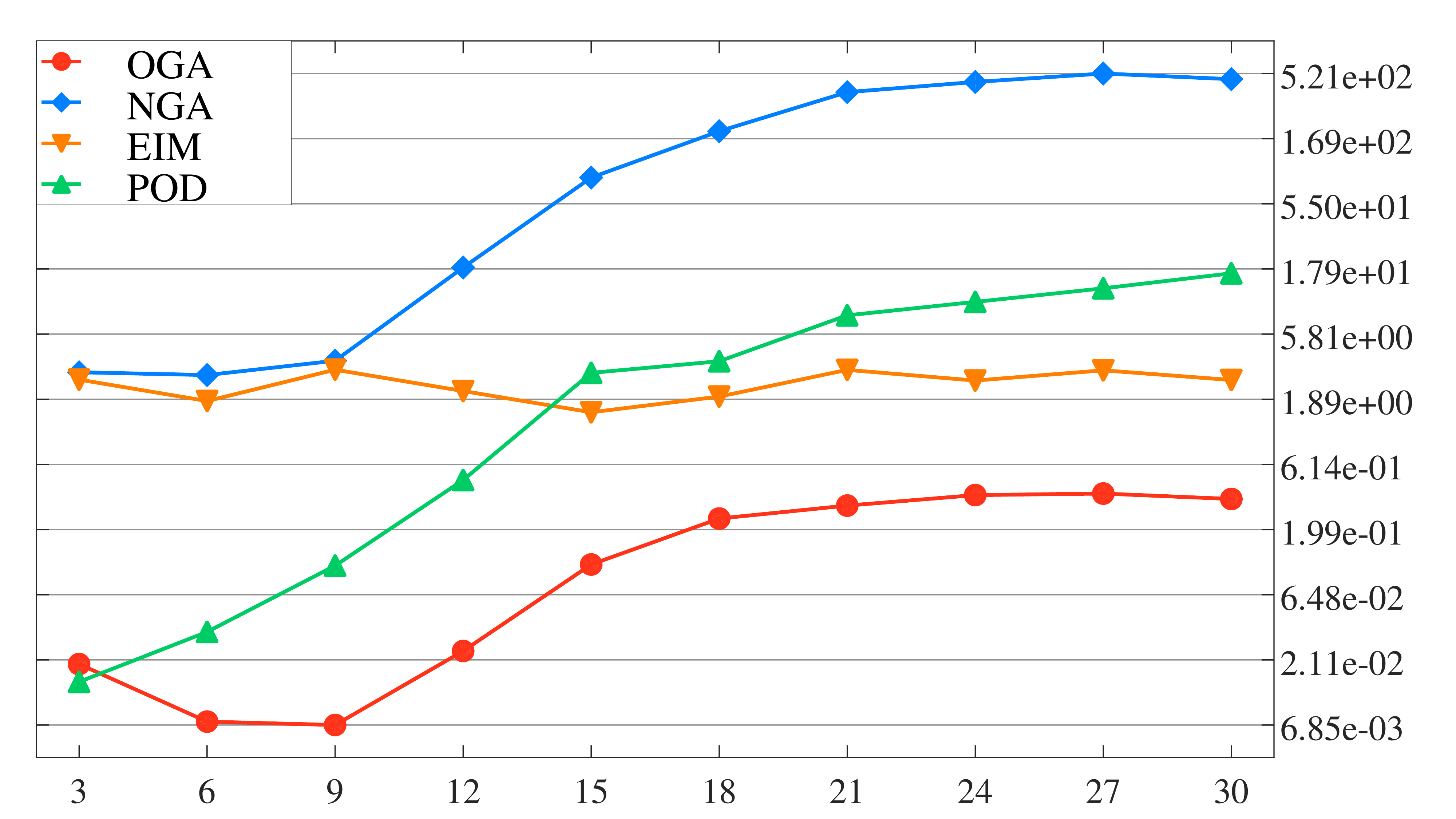}
        \caption{minimal quality}
    \end{subfigure}
    \caption{Performance of the OGA, NGA, EIM, and POD reduced bases for approximating the set $\widetilde{\mathcal{F}}^1_{tr}$ (sampled from the one-dimensional parametric family~\eqref{1dn_F_tr}) in $L_1[-1,1]$.}
    \label{fig:1d1n_appr_qual}
\end{figure}

The following parametric function was used in~\cite{ngpape2008} to compare the interpolation properties of the reduced bases generated by EIM and POD.
We compare those reduced bases with the ones constructed by the OGA and the NGA and present the approximating properties and `qualities' of all four algorithms.
Namely, we consider the parametric family $\mathcal{F}^1(x,\mu) \subset L_1(\Omega^1 \times \mathcal{D}^1)$ given by
\begin{equation}
    \label{1d_F_tr}
	\mathcal{F}^1(x,\mu) = (1-x) \, \cos(3\pi \mu (x+1)) \, \exp(-\mu(1+x)),
\end{equation}
where $x \in \Omega^1 = [-1,1]$ and $\mu \in \mathcal{D}^1 = [1,\pi]$.

To obtain a perturbed parametric family $\widetilde{\mathcal{F}}^1$ we add an indicator function of the set $\br{\frac{\mu-1}{1000}, \frac{\mu}{1000}}$ to $\mathcal{F}^1$ to serve as a perturbation:
\begin{equation}
    \label{1dn_F_tr}
	\widetilde{\mathcal{F}}^1(x,\mu) = (1-x) \, \cos(3\pi \mu (x+1)) \, \exp(-\mu(1+x)) + \left\{\frac{\mu-1}{1000} < x < \frac{\mu}{1000} \right\},
\end{equation}
where $x \in \Omega^1 = [-1,1]$ and $\mu \in \mathcal{D}^1 = [1,\pi]$.

Some elements of the families $\mathcal{F}^1$ and $\widetilde{\mathcal{F}}^1$ are shown in Figure~\ref{fig:1d1_F_tr}.
We generate the discrete training sets $\mathcal{F}^1_{tr}$ and $\widetilde{\mathcal{F}}^1_{tr}$ by sampling the spatial domain $\Omega$ at $N_h = 100,000$ uniform points, and the parametric domain $\mathcal{D}$ at $N_{tr} = 500$ uniform points.

The approximation error and the quality of the constructed reduced bases are reported in Figures~\ref{fig:1d1_appr_qual} and~\ref{fig:1d1n_appr_qual} respectively.
In this experiment we observe that for the set $\mathcal{F}^1_{tr}$ all the algorithms perform similarly, with only the OGA lagging in terms of quality due to the higher computational cost.
However, for the perturbed set $\widetilde{\mathcal{F}}^1_{tr}$, the approximation accuracy of the EIM is significantly lower than that of the other algorithms, due to the construction of the interpolation points $z_n$, $n \ge 0$, and the magnitude of perturbation.
Moreover, the OGA and NGA appear to be more resistant to such perturbation, which results in an even better approximation error than the one provided by the POD.

\subsubsection*{Two-dimensional parametric family}
Consider the family $\mathcal{F}^2 \in L_1(\Omega^2 \times \mathcal{D}^2)$:
\begin{equation}
    \label{2d_F_tr}
	\mathcal{F}^2(x_1,x_2,\mu_1,\mu_2) = \sin(x_1\mu_1) \, \cos(x_2\mu_2) \, \exp(|x_1|\mu_1 + |x_2|\mu_2),
\end{equation}
where $(x_1,x_2) \in \Omega^2 = [0,1] \times [0,1]$ and $(\mu_1,\mu_2) \in \mathcal{D}^2 = [\pi/3, 2\pi] \times [\pi/3, 2\pi]$.
We generate the discrete training set $\mathcal{F}^2_{tr}$ by sampling the spatial domain $\Omega^2$ on the uniform grid of the size $N_h = 300 \times 300$, and the parametric domain $\mathcal{D}^2$ at the uniform grid of the size $N_{tr} = 25 \times 25$.

To obtain a noisy training set $\widetilde{\mathcal{F}}^2_{tr}$ we change $1\%$ of the coordinates of the training set $\mathcal{F}^2_{tr}$ by random values raging from $0$ up to $\operatorname{avg}(\mathcal{F}^2_{tr})$ (the average value of all the coordinates of the training set).
Thus, our `noise' randomly changes a large number ($1\%$) of coordinates of the training set by a small value (up to $\operatorname{avg}(\mathcal{F}^2_{tr})$).

The approximation error and the quality of the constructed reduced bases constructed for the sets $\mathcal{F}^2_{tr}$ and $\widetilde{\mathcal{F}}^2_{tr}$ are reported in Figures~\ref{fig:2d1_appr_qual} and~\ref{fig:2d1n_appr_qual} respectively.
We observe that for the set $\mathcal{F}^2_{tr}$ the approximation accuracies of the NGA and POD are better than those of the OGA and EIM.
In terms of quality, the POD is close to the OGA at low iterations due to the necessity to perform the singular value decomposition on the large amount of data.
For the noisy set $\widetilde{\mathcal{F}}^2_{tr}$, the quality of the algorithms does not change significantly, however, the approximation accuracies of the OGA and NGA are slightly better than that of the POD.

\begin{figure}
    \hfill
    \begin{subfigure}{.49\linewidth}
        \resizebox{.9\linewidth}{!}{%
            \begin{tabular}{|c|c|c|c|c|}
                \hline$m$ & OGA & NGA & EIM & POD \\\hline
                3 & 1.092e-03 & 1.092e-03 & 1.092e-03 & 1.157e-03 \\\hline
                6 & 2.552e-04 & 2.552e-04 & 2.552e-04 & 2.561e-04 \\\hline
                9 & 1.171e-04 & 1.171e-04 & 1.441e-04 & 1.256e-04 \\\hline
                12 & 3.862e-05 & 3.863e-05 & 3.872e-05 & 3.946e-05 \\\hline
                15 & 2.099e-05 & 2.102e-05 & 2.359e-05 & 2.049e-05 \\\hline
                18 & 1.009e-05 & 1.009e-05 & 1.602e-05 & 1.101e-05 \\\hline
                21 & 6.361e-06 & 6.126e-06 & 8.995e-06 & 5.551e-06 \\\hline
                24 & 3.739e-06 & 3.880e-06 & 4.509e-06 & 4.053e-06 \\\hline
                27 & 2.779e-06 & 2.640e-06 & 3.411e-06 & 2.468e-06 \\\hline
                30 & 1.893e-06 & 1.491e-06 & 1.942e-06 & 1.433e-06 \\\hline
            \end{tabular}}
        \caption{average approximation error}
    \end{subfigure}
    \begin{subfigure}{.49\linewidth}
        \resizebox{.9\linewidth}{!}{%
            \begin{tabular}{|c|c|c|c|c|}
                \hline$m$ & OGA & NGA & EIM & POD \\\hline
                3 & 9.519e-02 & 9.519e-02 & 9.519e-02 & 6.013e-02 \\\hline
                6 & 8.730e-03 & 8.727e-03 & 8.745e-03 & 5.162e-03 \\\hline
                9 & 4.422e-03 & 4.424e-03 & 5.425e-03 & 5.042e-03 \\\hline
                12 & 7.167e-04 & 7.167e-04 & 7.181e-04 & 5.587e-04 \\\hline
                15 & 2.744e-04 & 2.744e-04 & 4.038e-04 & 3.034e-04 \\\hline
                18 & 1.102e-04 & 1.102e-04 & 2.601e-04 & 1.516e-04 \\\hline
                21 & 6.585e-05 & 1.093e-04 & 1.739e-04 & 6.838e-05 \\\hline
                24 & 4.680e-05 & 4.359e-05 & 6.777e-05 & 3.704e-05 \\\hline
                27 & 2.233e-05 & 2.690e-05 & 3.812e-05 & 1.845e-05 \\\hline
                30 & 1.709e-05 & 1.879e-05 & 2.427e-05 & 1.313e-05 \\\hline
            \end{tabular}}
        \caption{maximal approximation error}
    \end{subfigure}
    \begin{subfigure}{.49\linewidth}
        \includegraphics[width=\linewidth]{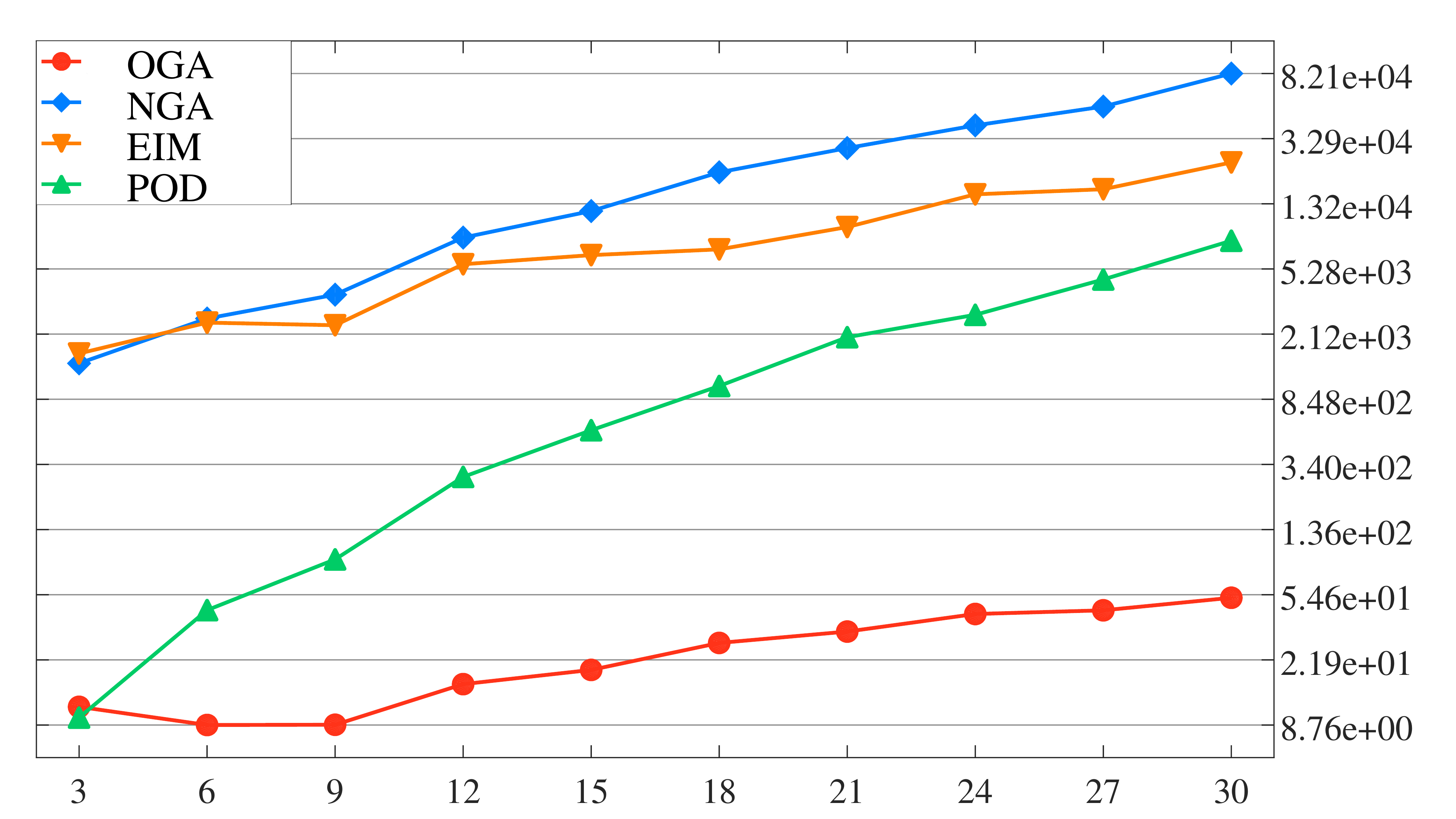}
        \caption{average quality}
    \end{subfigure}
    \begin{subfigure}{.49\linewidth}
        \includegraphics[width=\linewidth]{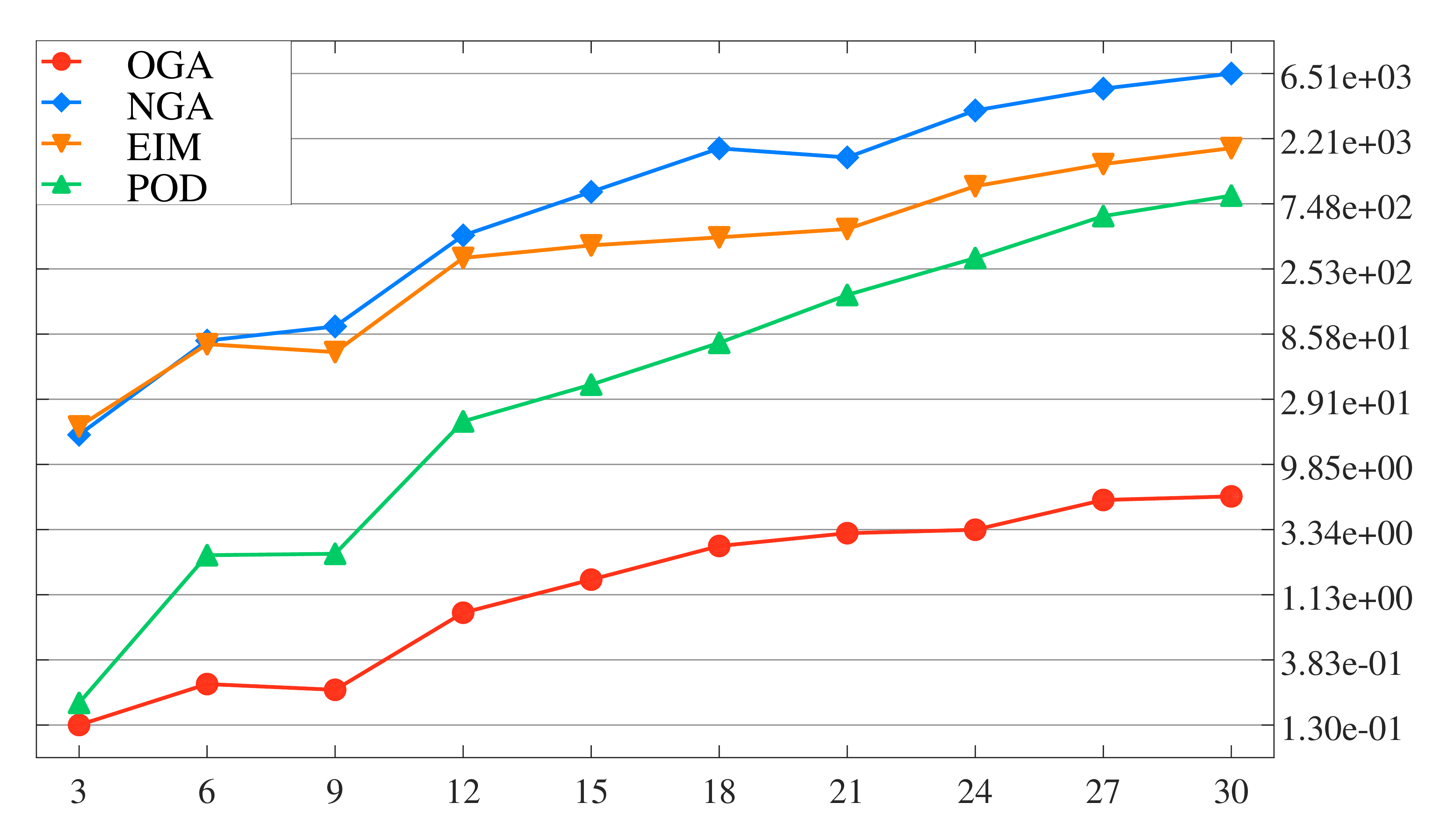}
        \caption{minimal quality}
    \end{subfigure}
    \caption{Performance of the OGA, NGA, EIM, and POD reduced bases for approximating the set $\mathcal{F}^2_{tr}$ (sampled from the two-dimensional parametric family~\eqref{2d_F_tr}) in $L_1([0,1]^2)$.}
    \label{fig:2d1_appr_qual}
\end{figure}

\begin{figure}
    \hfill
    \begin{subfigure}{.49\linewidth}
        \resizebox{.9\linewidth}{!}{%
            \begin{tabular}{|c|c|c|c|c|}
                \hline$m$ & OGA & NGA & EIM & POD \\\hline
                3 & 1.093e-03 & 1.093e-03 & 1.093e-03 & 1.158e-03 \\\hline
                6 & 2.560e-04 & 2.560e-04 & 2.561e-04 & 2.569e-04 \\\hline
                9 & 1.187e-04 & 1.187e-04 & 1.458e-04 & 1.273e-04 \\\hline
                12 & 4.166e-05 & 4.167e-05 & 4.158e-05 & 4.223e-05 \\\hline
                15 & 2.569e-05 & 2.571e-05 & 3.026e-05 & 2.575e-05 \\\hline
                18 & 1.882e-05 & 1.888e-05 & 1.995e-05 & 1.848e-05 \\\hline
                21 & 1.553e-05 & 1.593e-05 & 1.968e-05 & 1.579e-05 \\\hline
                24 & 1.364e-05 & 1.377e-05 & 1.869e-05 & 1.472e-05 \\\hline
                27 & 1.207e-05 & 1.293e-05 & 1.784e-05 & 1.423e-05 \\\hline
                30 & 1.132e-05 & 1.247e-05 & 1.776e-05 & 1.420e-05 \\\hline
            \end{tabular}}
        \caption{average approximation error}
    \end{subfigure}
    \begin{subfigure}{.49\linewidth}
        \resizebox{.9\linewidth}{!}{%
            \begin{tabular}{|c|c|c|c|c|}
                \hline$m$ & OGA & NGA & EIM & POD \\\hline
                3 & 9.521e-02 & 9.521e-02 & 9.525e-02 & 6.015e-02 \\\hline
                6 & 8.772e-03 & 8.779e-03 & 8.773e-03 & 5.172e-03 \\\hline
                9 & 4.434e-03 & 4.434e-03 & 5.434e-03 & 5.042e-03 \\\hline
                12 & 7.432e-04 & 7.426e-04 & 7.427e-04 & 5.813e-04 \\\hline
                15 & 3.654e-04 & 3.654e-04 & 4.419e-04 & 3.244e-04 \\\hline
                18 & 1.852e-04 & 2.011e-04 & 2.025e-04 & 1.606e-04 \\\hline
                21 & 1.398e-04 & 1.695e-04 & 2.022e-04 & 1.369e-04 \\\hline
                24 & 1.151e-04 & 1.455e-04 & 1.990e-04 & 1.101e-04 \\\hline
                27 & 9.667e-05 & 1.129e-04 & 1.981e-04 & 1.011e-04 \\\hline
                30 & 7.804e-05 & 9.912e-05 & 1.978e-04 & 1.009e-04 \\\hline
            \end{tabular}}
        \caption{maximal approximation error}
    \end{subfigure}
    \begin{subfigure}{.49\linewidth}
        \includegraphics[width=\linewidth]{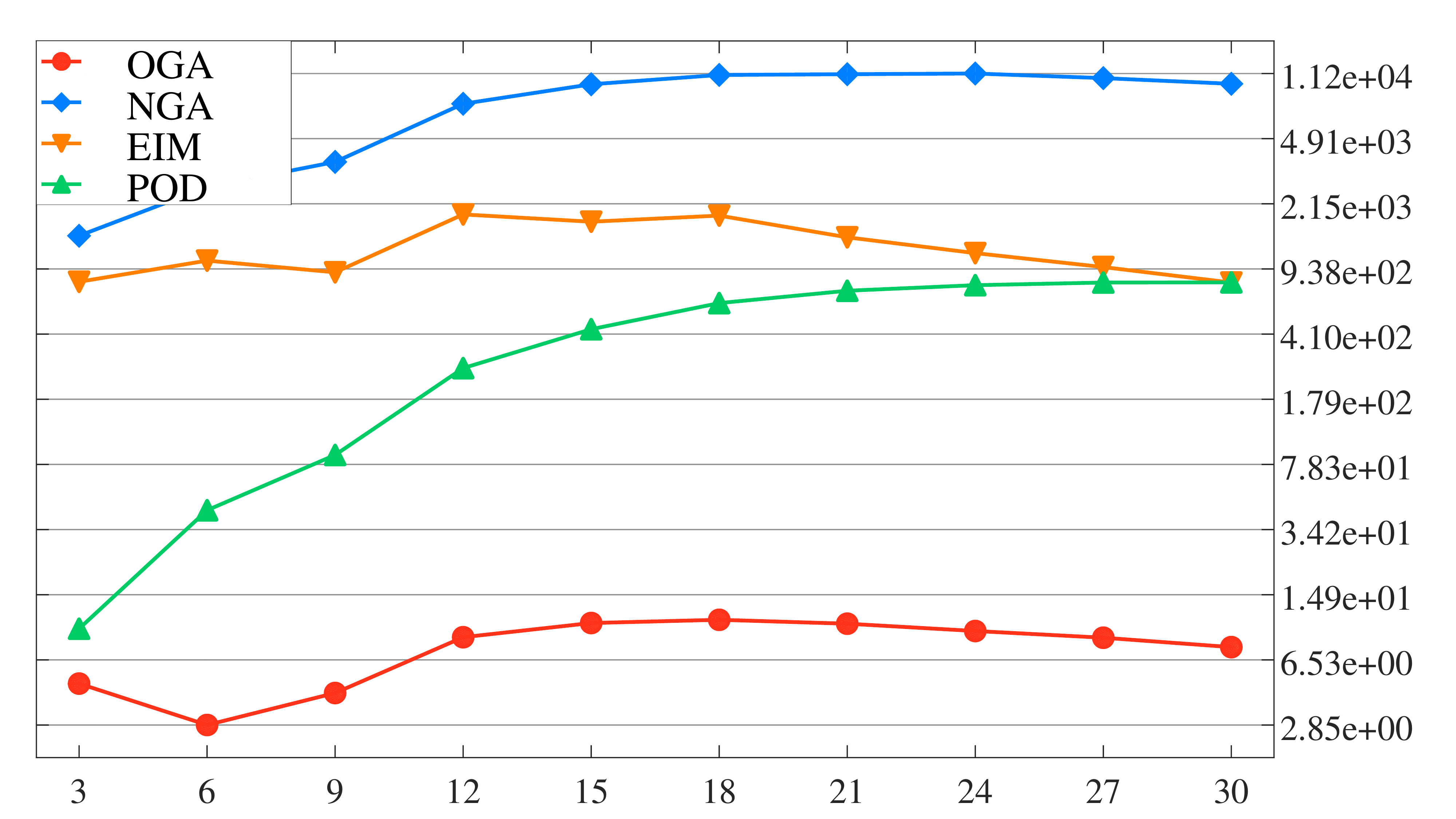}
        \caption{average quality}
    \end{subfigure}
    \begin{subfigure}{.49\linewidth}
        \includegraphics[width=\linewidth]{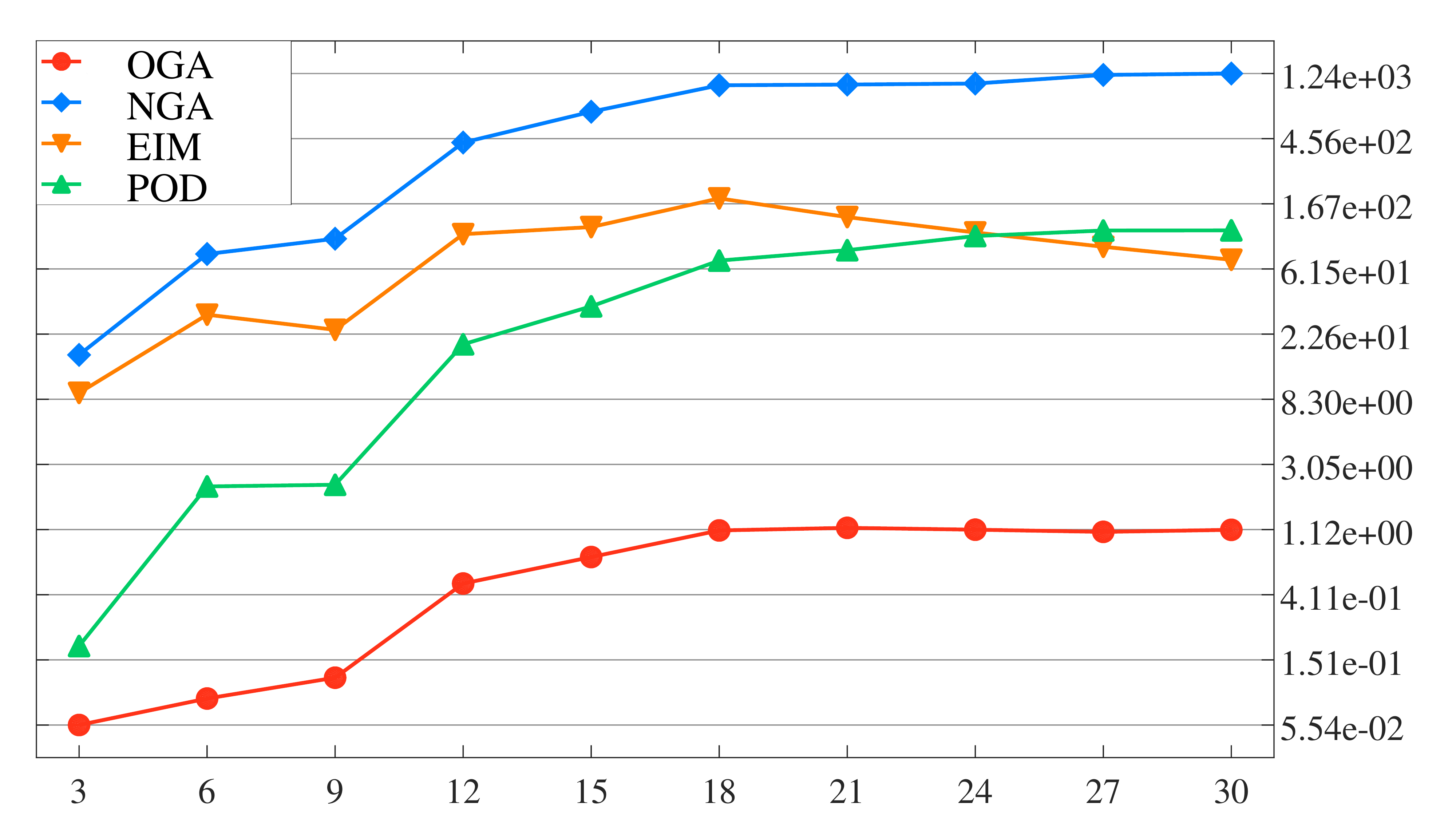}
        \caption{minimal quality}
    \end{subfigure}
    \caption{Performance of the OGA, NGA, EIM, and POD reduced bases for approximating the set $\widetilde{\mathcal{F}}^2_{tr}$ (sampled from the two--dimensional parametric family~\eqref{2d_F_tr} with small additive noise) in $L_1([0,1]^2)$.}
    \label{fig:2d1n_appr_qual}
\end{figure}

\subsubsection*{Three-dimensional parametric family}
Consider the family $\mathcal{F}^3 \in L_1(\Omega^3 \times \mathcal{D}^3)$:
\begin{multline}
    \label{3d_F_tr}
	\mathcal{F}^3(x_1,x_2,x_3,\mu_1,\mu_2,\mu_3) = (1-x_1)(1-x_2)(1-x_3) 
	\\
	\times \sin\big(\pi (x_1\mu_1 + x_2\mu_2 + x_3\mu_3)\big) \, \exp\big((x_1+\mu_1)(x_2+\mu_2)(x_3+\mu_3)\big),
\end{multline}
where $(x_1,x_2,x_3) \in \Omega^3 = [0,1] \times [0,1] \times [0,1]$ and $(\mu_1,\mu_2,\mu_3) \in \mathcal{D}^3 = [0,1] \times [0,1] \times [0,1]$.
We generate the discrete training set $\mathcal{F}^3_{tr}$ by sampling the spatial domain $\Omega$ on the uniform grid of the size $N_h = 50 \times 50 \times 50$, and the parametric domain $\mathcal{D}$ at the uniform grid of the size $N_{tr} = 8 \times 8 \times 8$.

For the corresponding noisy training set $\widetilde{\mathcal{F}}^3_{tr}$ we change $.001\%$ of the coordinates the training set $\mathcal{F}^3_{tr}$ by random values raging from $0$ up to $\max(\mathcal{F}^3_{tr})$ (the maximal value of the coordinates of the training set).
Thus, our `noise' randomly changes a small number ($.001\%$) of coordinates of the training set by a large value (up to $\max(\mathcal{F}^3_{tr})$).

The approximation error and the quality of the constructed reduced bases constructed for the sets $\mathcal{F}^3_{tr}$ and $\widetilde{\mathcal{F}}^3_{tr}$ are reported in Figures~\ref{fig:3d1_appr_qual} and~\ref{fig:3d1n_appr_qual} respectively.
Similarly to the previous examples, we observe that the OGA and NGA are more resistant to perturbations, thus providing a better approximation accuracy for the noisy set $\widetilde{\mathcal{F}}^3_{tr}$, even though the POD is more accurate on the unperturbed set $\mathcal{F}^3_{tr}$.

\begin{figure}
    \hfill
    \begin{subfigure}{.49\linewidth}
        \resizebox{.9\linewidth}{!}{%
            \begin{tabular}{|c|c|c|c|c|}
                \hline$m$ & OGA & NGA & EIM & POD \\\hline
                3 & 1.001e-02 & 1.001e-02 & 7.688e-03 & 6.142e-03 \\\hline
                6 & 1.965e-03 & 1.965e-03 & 1.591e-03 & 1.053e-03 \\\hline
                9 & 7.788e-04 & 7.787e-04 & 8.540e-04 & 4.911e-04 \\\hline
                12 & 4.436e-04 & 4.493e-04 & 4.314e-04 & 1.809e-04 \\\hline
                15 & 2.584e-04 & 2.393e-04 & 2.612e-04 & 8.728e-05 \\\hline
                18 & 8.960e-05 & 8.444e-05 & 8.371e-05 & 3.439e-05 \\\hline
                21 & 3.247e-05 & 3.270e-05 & 3.254e-05 & 1.393e-05 \\\hline
                24 & 1.915e-05 & 1.960e-05 & 1.864e-05 & 8.206e-06 \\\hline
                27 & 9.967e-06 & 8.704e-06 & 1.084e-05 & 4.931e-06 \\\hline
                30 & 5.917e-06 & 6.285e-06 & 8.008e-06 & 3.056e-06 \\\hline
            \end{tabular}}
        \caption{average approximation error}
    \end{subfigure}
    \begin{subfigure}{.49\linewidth}
        \resizebox{.9\linewidth}{!}{%
            \begin{tabular}{|c|c|c|c|c|}
                \hline$m$ & OGA & NGA & EIM & POD \\\hline
                3 & 3.942e-02 & 3.942e-02 & 3.469e-02 & 2.746e-02 \\\hline
                6 & 5.452e-03 & 5.452e-03 & 5.844e-03 & 5.301e-03 \\\hline
                9 & 3.749e-03 & 3.749e-03 & 3.911e-03 & 2.665e-03 \\\hline
                12 & 8.980e-04 & 8.865e-04 & 9.765e-04 & 9.524e-04 \\\hline
                15 & 5.367e-04 & 4.741e-04 & 6.343e-04 & 6.320e-04 \\\hline
                18 & 2.850e-04 & 2.695e-04 & 2.676e-04 & 1.913e-04 \\\hline
                21 & 6.952e-05 & 8.273e-05 & 1.089e-04 & 5.437e-05 \\\hline
                24 & 4.908e-05 & 4.731e-05 & 5.559e-05 & 3.117e-05 \\\hline
                27 & 2.412e-05 & 2.381e-05 & 2.539e-05 & 1.250e-05 \\\hline
                30 & 1.606e-05 & 1.555e-05 & 2.128e-05 & 9.645e-06 \\\hline
            \end{tabular}}
        \caption{maximal approximation error}
    \end{subfigure}
    \begin{subfigure}{.49\linewidth}
        \includegraphics[width=\linewidth]{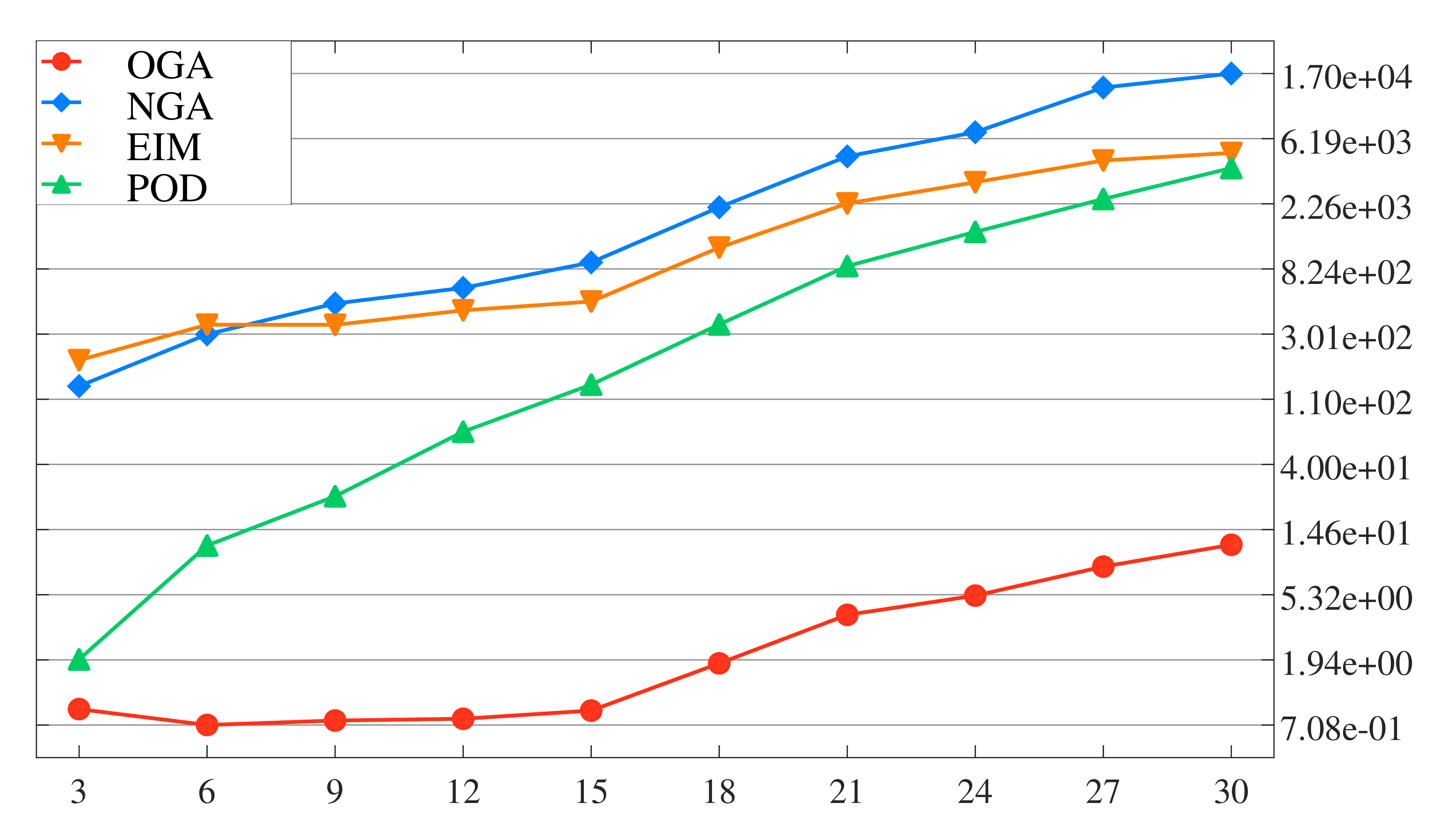}
        \caption{average quality}
    \end{subfigure}
    \begin{subfigure}{.49\linewidth}
        \includegraphics[width=\linewidth]{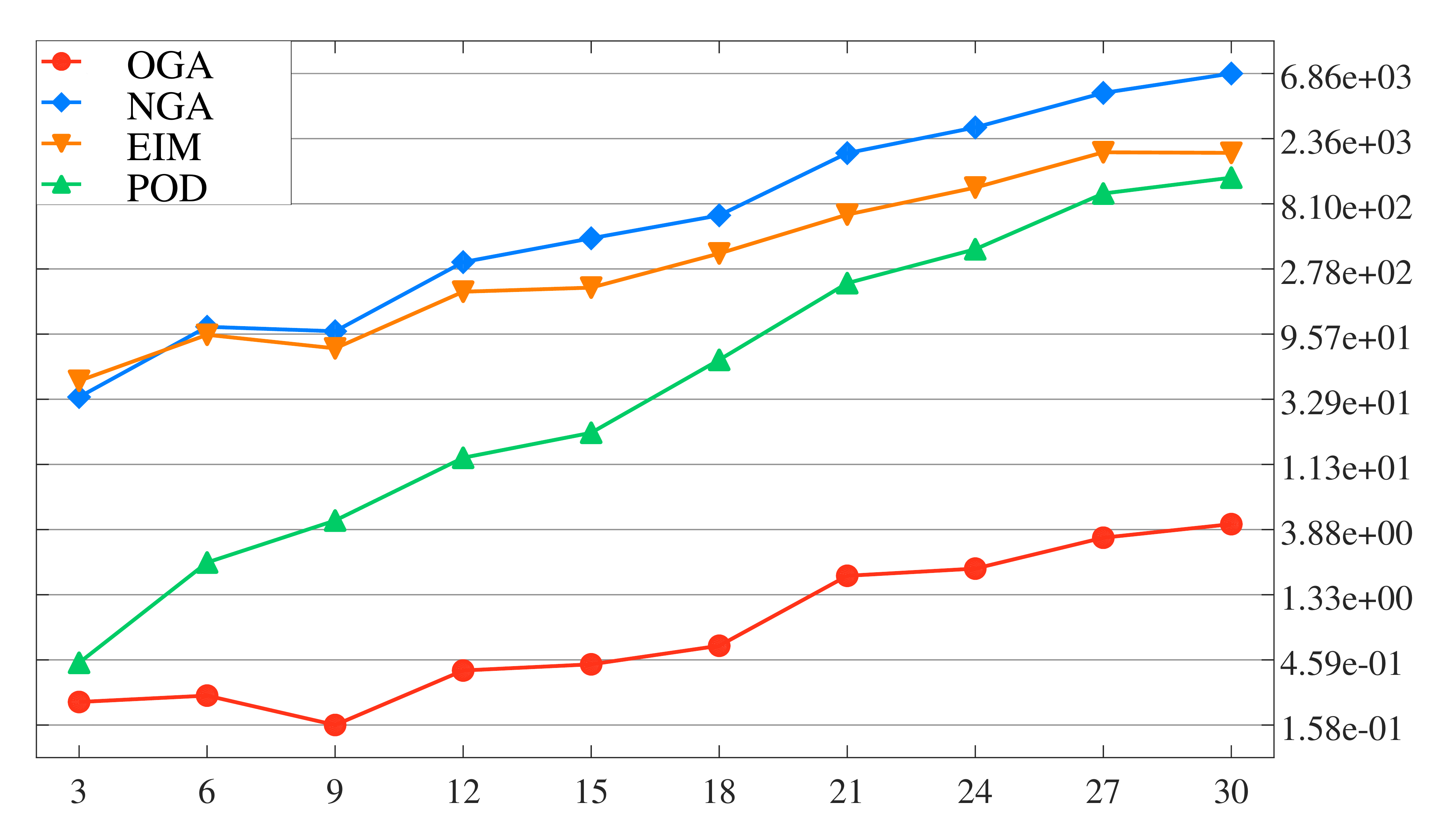}
        \caption{minimal quality}
    \end{subfigure}
    \caption{Performance of the OGA, NGA, EIM, and POD reduced bases for approximating the set $\mathcal{F}^3_{tr}$ (sampled from the three-dimensional parametric family~\eqref{3d_F_tr}) in $L_1([0,1]^3)$.}
    \label{fig:3d1_appr_qual}
\end{figure}

\begin{figure}
    \hfill
    \begin{subfigure}{.49\linewidth}
        \resizebox{.9\linewidth}{!}{%
            \begin{tabular}{|c|c|c|c|c|}
                \hline$m$ & OGA & NGA & EIM & POD \\\hline
                3 & 1.001e-02 & 1.001e-02 & 8.062e-03 & 6.151e-03 \\\hline
                6 & 1.977e-03 & 1.977e-03 & 2.154e-03 & 1.067e-03 \\\hline
                9 & 7.926e-04 & 7.929e-04 & 1.340e-03 & 5.102e-04 \\\hline
                12 & 4.443e-04 & 4.716e-04 & 9.804e-04 & 2.434e-04 \\\hline
                15 & 2.637e-04 & 2.653e-04 & 3.775e-04 & 2.203e-04 \\\hline
                18 & 1.373e-04 & 1.363e-04 & 3.298e-04 & 1.765e-04 \\\hline
                21 & 8.198e-05 & 9.419e-05 & 2.785e-04 & 1.449e-04 \\\hline
                24 & 6.426e-05 & 7.786e-05 & 2.683e-04 & 1.416e-04 \\\hline
                27 & 6.145e-05 & 6.744e-05 & 2.416e-04 & 1.348e-04 \\\hline
                30 & 5.684e-05 & 6.367e-05 & 2.308e-04 & 1.288e-04 \\\hline
            \end{tabular}}
        \caption{average approximation error}
    \end{subfigure}
    \begin{subfigure}{.49\linewidth}
        \resizebox{.9\linewidth}{!}{%
            \begin{tabular}{|c|c|c|c|c|}
                \hline$m$ & OGA & NGA & EIM & POD \\\hline
                3 & 3.943e-02 & 3.943e-02 & 4.005e-02 & 2.770e-02 \\\hline
                6 & 5.481e-03 & 5.481e-03 & 5.173e-03 & 5.349e-03 \\\hline
                9 & 3.764e-03 & 3.768e-03 & 4.274e-03 & 2.818e-03 \\\hline
                12 & 9.248e-04 & 9.102e-04 & 4.058e-03 & 1.113e-03 \\\hline
                15 & 5.366e-04 & 5.014e-04 & 2.502e-03 & 1.102e-03 \\\hline
                18 & 3.321e-04 & 3.286e-04 & 2.419e-03 & 1.061e-03 \\\hline
                21 & 1.656e-04 & 2.068e-04 & 2.329e-03 & 9.938e-04 \\\hline
                24 & 1.398e-04 & 1.599e-04 & 2.329e-03 & 9.833e-04 \\\hline
                27 & 1.073e-04 & 1.526e-04 & 2.290e-03 & 9.449e-04 \\\hline
                30 & 9.411e-05 & 1.484e-04 & 1.825e-03 & 9.438e-04 \\\hline
            \end{tabular}}
        \caption{maximal approximation error}
    \end{subfigure}
    \begin{subfigure}{.49\linewidth}
        \includegraphics[width=\linewidth]{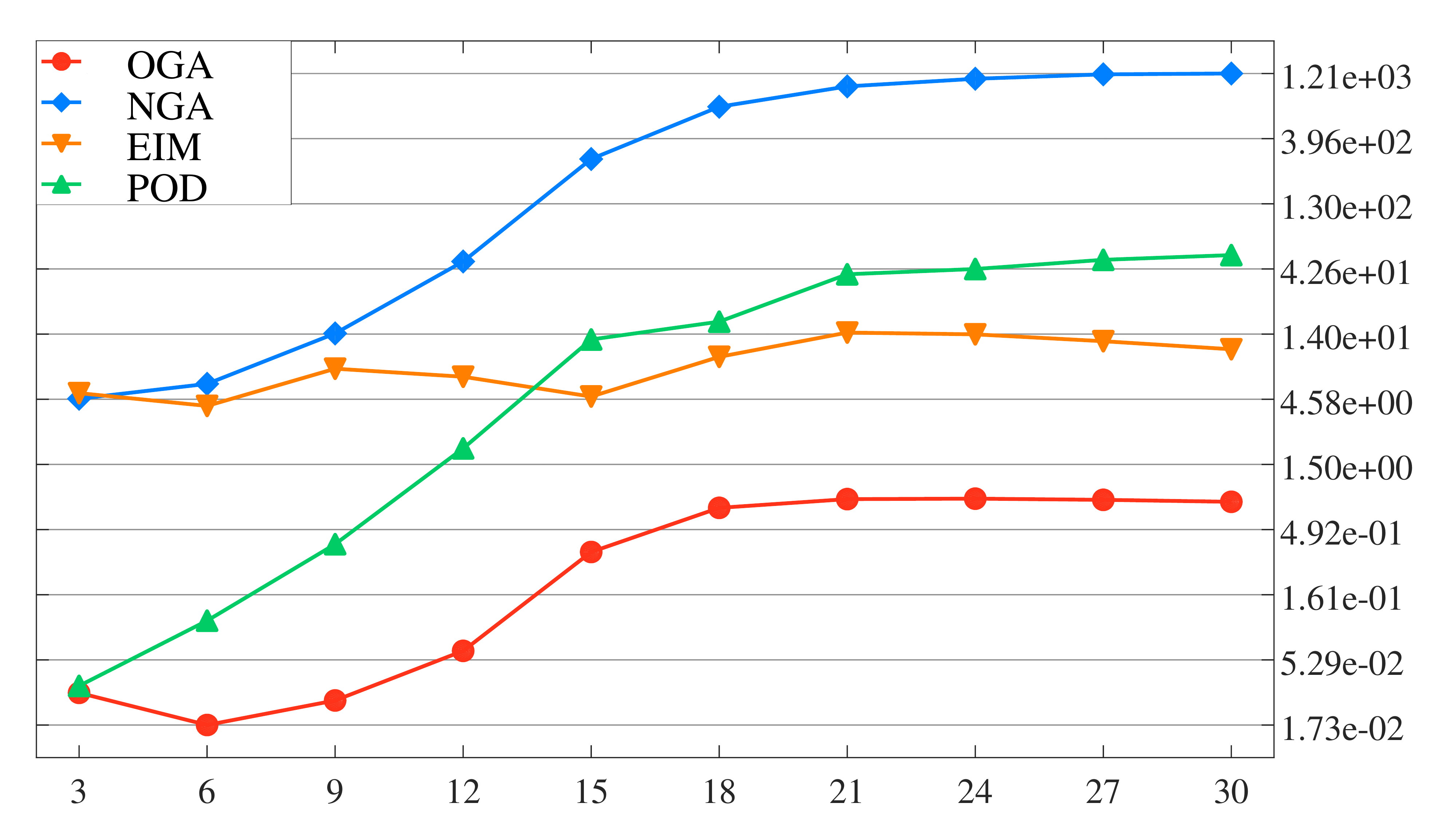}
        \caption{average quality}
    \end{subfigure}
    \begin{subfigure}{.49\linewidth}
        \includegraphics[width=\linewidth]{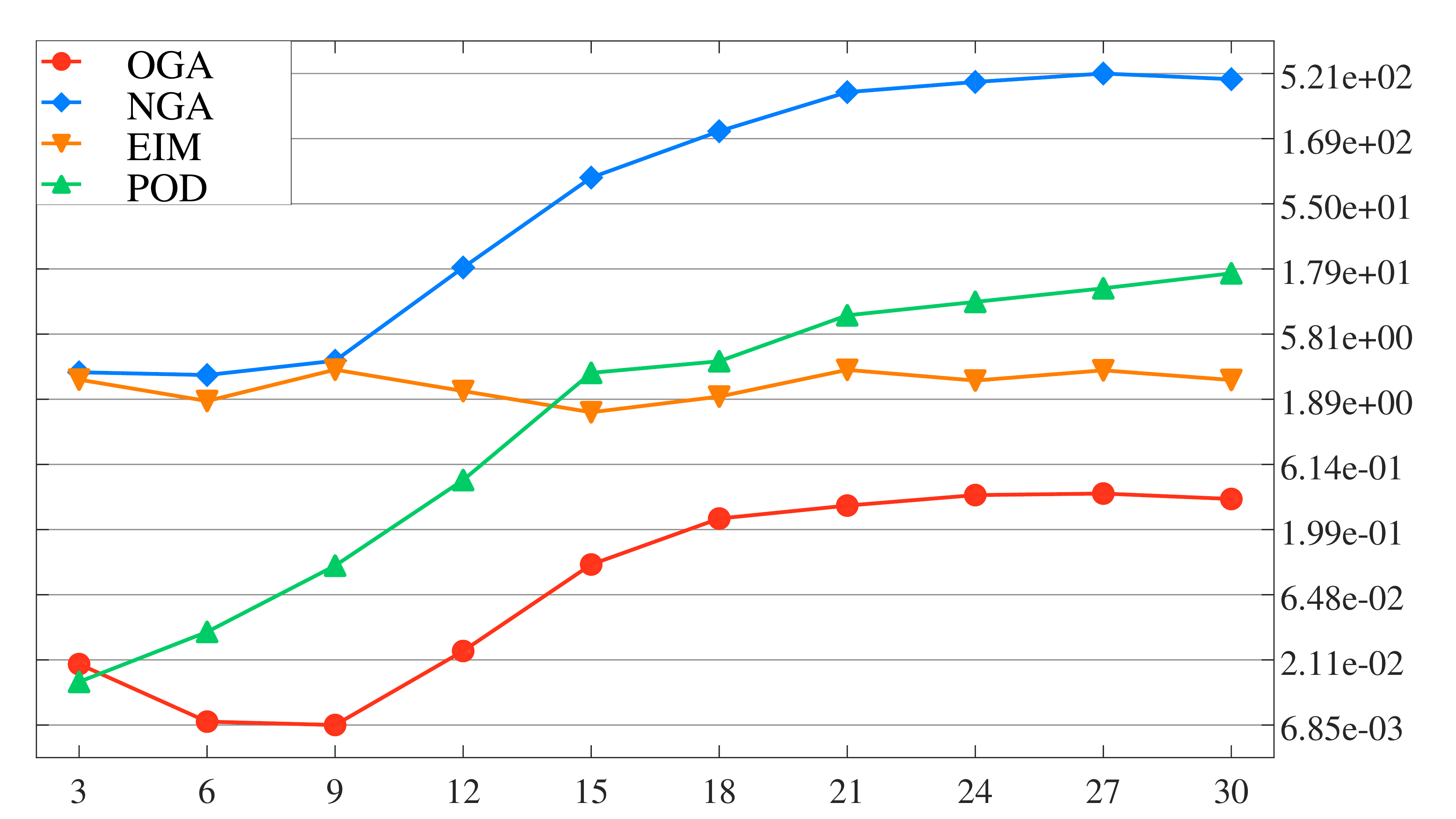}
        \caption{minimal quality}
    \end{subfigure}
    \caption{Performance of the OGA, NGA, EIM, and POD reduced bases for approximating the set $\widetilde{\mathcal{F}}^3_{tr}$ (sampled from the three-dimensional parametric family~\eqref{3d_F_tr} with localized additive noise) in $L_1([0,1]^3)$.}
    \label{fig:3d1n_appr_qual}
\end{figure}

\subsection{Norms of the operators $\boldsymbol{\mathcal{R}_n}$}\label{section_remarks}
We conclude this section by commenting on the utilization of the operators $\mathcal{R}_n$ in place of the actual remainder of the orthogonal projection, i.e., computing $\n{\mathcal{R}_n(\cdot)}$ instead of $\dist(\cdot,V_n)$.
Recall that generally the norm of $\mathcal{R}_n$ is larger than $1$, thus the norm of the resulting remainder can be larger than the norm of the projected element.
Namely, estimate~\eqref{estimate_norm_R_n_V} provides
\[
	\n{\mathcal{R}_n}_{V^*} \le B_n^g = \min\{R^n, C_g + 1\},
\]
where $R(\mathcal{X}) > 1$ is the parameter that was introduced in Lemma~\ref{estimate_norm_R_n}, and $C_g(\mathcal{F,X})$ is the basis constant of the reduced basis constructed by the NGA.
Since $C_g$ is unknown, it might seem that our theoretical estimates for the convergence rates of the NGA essentially contain an exponential factor.
For instance, Theorem~\ref{theorem_nga_indirect} provides the convergence rate
\[
	\sigma_n(\mathcal{F,X}) \le \sqrt{2} B_{n/2}^g(\mathcal{F,X}) \min_{0 < m < n} \gamma_{n+m}(\mathcal{X}) \, d_m^{1-m/n}(\mathcal{F,X}),
\]
where the factor $B_{n/2}^g$, as can be seen from the proof, comes from the estimate
\[
	\br{\prod_{m=0}^{n-1} \n{\mathcal{R}_m}_{V_n^*}}^{1/n} 
	\le \min\{R(\mathcal{X})^{(n-1)/2}, C_g(\mathcal{F,X})+1\}
	\le B_{n/2}^g.
\]
To clarify that such exponential growth does not actually occur, we note that
\[
	\br{\prod_{m=0}^{n-1} \n{\mathcal{R}_m}_{V_n^*}}^{1/n} \le \max_{0 \le m < n} \n{\mathcal{R}_m}_{V_n^*}
\]
and calculate the values of $\max_{0 \le m < n} \n{\mathcal{R}_m}_{V_n^*}$ (with various values of $n$) that correspond to the numerical examples given in this section.
We obtain the norm of operator $\mathcal{R}_m$ on the subspace $V_n$ by using the property $\mathcal{R}_m(V_m) = 0$ (see~\eqref{nga_R_n(V_n)=0}) and solving the optimization problem
\begin{align*}
	\n{\mathcal{R}_m}_{V_n^*} 
	= \max_{f \in V_n} \left\{ \frac{\n{\mathcal{R}_m(f)}}{\n{f}} \right\}
	&= \max_{\alpha_0,\ldots,\alpha_{n-1} \in \mathbb{R}} \left\{ \frac{\n{\mathcal{R}_m\br{\sum_{k=0}^{n-1} \alpha_k g_k}}}{\n{\sum_{k=0}^{n-1} \alpha_k g_k}} \right\}
    \\
	&= \max_{\alpha_0,\ldots,\alpha_{n-1} \in \mathbb{R}} \left\{ \frac{\n{\sum_{k=m}^{n-1} \alpha_k g_k}}{\n{\sum_{k=0}^{n-1} \alpha_k g_k}} \right\}.
\end{align*}
In Table~\ref{tbl:norm_R_m_max} we present the obtained values for $\max_{0 \le m < n} \n{\mathcal{R}_m}_{V_n^*}$ and compare them to the most optimistic theoretical estimate for the exponential factor $R^{(n-1)/2}$ provided by Lemma~\ref{estimate_norm_R_n} for the Hilbert space setting by the equality $R(\mathcal{H}) = (1+\sqrt{5})/2$.

\begin{table}
	\begin{tabular}{cc|c|c|c|c|c|}
	\\\cline{2-7}
	& \multicolumn{6}{|c|}{Dimensionality of the subspace $V_n$}
	\\\cline{2-7}
	\multicolumn{1}{c|}{} & $5$ & $10$ & $15$ & $20$ & $25$ & $30$
	\\\hline
	\multicolumn{1}{|c|}{Theoretical estimate} & $2.6180$ & $8.7186$ & $29.034$ & $96.690$ & $322.00$ & $1072.3$
	\\\hline
	\multicolumn{1}{|c|}{Figure~\ref{15d1}} & $1.1857$ & $1.3313$ & $1.4260$ & $1.5389$ & $1.7963$ & $2.1759$
	\\\hline
	\multicolumn{1}{|c|}{Figure~\ref{15d3}} & $1.0242$ & $1.0623$ & $1.0978$ & $1.1189$ & $1.1328$ & $1.1445$
	\\\hline
	\multicolumn{1}{|c|}{Figure~\ref{15d10}} & $1.3707$ & $2.0628$ & $2.3644$ & $2.6191$ & $2.6894$ & $2.9247$
	\\\hline
	\multicolumn{1}{|c|}{Figure~\ref{fig:1d1_appr_qual}} & $1.1369$ & $1.8069$ & $2.6031$ & $3.4946$ & $3.9994$ & $4.3324$
	\\\hline
	\multicolumn{1}{|c|}{Figure~\ref{fig:1d1n_appr_qual}} & $1.1377$ & $1.8124$ & $2.5270$ & $2.7774$ & $2.7996$ & $2.7991$
	\\\hline
	\multicolumn{1}{|c|}{Figure~\ref{fig:2d1_appr_qual}} & $1.3498$ & $1.5669$ & $1.6344$ & $2.7012$ & $3.2748$ & $3.2714$
	\\\hline
	\multicolumn{1}{|c|}{Figure~\ref{fig:2d1n_appr_qual}} & $1.3492$ & $1.5636$ & $1.5849$ & $2.8762$ & $2.8712$ & $2.8759$
	\\\hline
	\multicolumn{1}{|c|}{Figure~\ref{fig:3d1_appr_qual}} & $1.1349$ & $1.2804$ & $1.6615$ & $1.8075$ & $1.8230$ & $1.8328$
	\\\hline
	\multicolumn{1}{|c|}{Figure~\ref{fig:3d1n_appr_qual}} & $1.1352$ & $1.2806$ & $1.6187$ & $2.2997$ & $2.2397$ & $2.3585$
	\\\hline
	\end{tabular}
    \caption{Theoretical and practical estimates of the maximum of the norms of the operators $\mathcal{R}_m$ for $0 \le m < n = \dim V_n$.}
    \label{tbl:norm_R_m_max}
\end{table}

\subsection{Concluding remarks}
In this effort we have presented the Natural Greedy Algorithm~--- a greedy-type method for constructing reduced bases in Banach spaces with arbitrary norms.
In its core the NGA relies on the novel method of computing projections onto subspaces in non-Hilbert norms, which is based on the utilization of norming functionals.
Such an approach is significantly less computationally demanding than the conventional orthogonal projection, which requires the solution of an optimization problem.
We have subjected the NGA to the rigorous mathematical analysis and established the convergence properties that are equivalent to the well-known OGA.

We have performed various numerical experiments that compare the efficiency of the NGA to that of the OGA, and the other commonly used reduced basis procedures, namely, the EIM and POD.
Our experiments demonstrate that the NGA matches the approximation accuracy that is provided by the other algorithms, while requires far less computational effort, especially when compared to the OGA.
We also observe that the NGA and the OGA appear to possess the robustness to various perturbations (which is expected from greedy algorithms), and thus perform better than the EIM and POD when applied to the noisy data.

Moreover, we note that the NGA can be viewed as a generalization of the Orthogonal Matching Pursuit to the Banach space setting, e.g., an alternative to the OGA as in a Hilbert space both algorithms coincide.
Additionally, the EIM can be viewed as a special case of the NGA since in $L_\infty$-spaces both the NGA and EIM essentially provide the same reduced basis sequences.

\subsubsection*{Acknowledgements}
This material is based upon work supported in part by: the U.S. Department of Energy, Office of Science, Early Career Research Program under award number ERKJ314; U.S. Department of Energy, Office of Advanced Scientific Computing Research under award numbers ERKJ331 and ERKJ345; the National Science Foundation, Division of Mathematical Sciences, Computational Mathematics program under contract number DMS1620280; and by the Laboratory Directed Research and Development program at the Oak Ridge National Laboratory, which is operated by UT-Battelle, LLC., for the U.S. Department of Energy under contract DE-AC05-00OR22725.

\appendix

\section{Proofs of main results}\label{section_proofs}
In this section we prove the results related to the OGA and the NGA that were stated in sections~\ref{section_ga} and~\ref{section_nga}.

\subsubsection*{Proof of Lemma~\ref{estimate_norm_R_n}}
First, we prove our estimate on the norm of operators $\mathcal{R}_n$ on $\mathcal{X}$.
\begin{proof}[Proof of Lemma~\ref{estimate_norm_R_n}]
Let $\mathcal{X}$ be a smooth Banach space (we will address a non-smooth case later).
Take any $f,g \in \mathcal{X}$ with $\n{f} = \n{g} = 1$ and denote $\alpha = F_g(f) \in [-1;1]$.
We begin by estimating the norm of
\[
	r(f) = f - F_g(f) \, g = f - \alpha g.
\]
If $\alpha = 0$ then trivially $\n{r(f)} = 1$.
Assume that $\alpha \ne 0$, then from equality~\eqref{norming_functional_smooth} we obtain
\[
	F_{\alpha g}(f) = \operatorname{sgn}(\alpha) \, F_g(f) = |\alpha|.
\] 
Recall that for any $x,y \in \mathcal{X}$ we have (see, e.g.,~\cite[Lemma~6.1]{te2011})
\[
	\n{x - y} \le \n{x} - F_x(y) + 2\n{x} \rho\br{\frac{\n{y}}{\n{x}}}.
\]
Applying this inequality with $x = \alpha g$ and $y = f$ provides
\[
	\n{r(f)} = \n{f - \alpha g}
	\le |\alpha| - F_{\alpha g}(f) + 2|\alpha| \, \rho(1/|\alpha|)
	= 2|\alpha| \, \rho(1/|\alpha|).
\]
On the other hand, by the triangle inequality we get
\[
	\n{r(f)} \le 1 + |\alpha| \le 2.
\]
Since both estimates must hold and functions $1 + x$ and $2x \, \rho(x^{-1})$ intersect once on $(0;1]$, we conclude that $\n{r(f)} \le 1 + \mu$, where $\mu \in (0;1]$ is the root of the equation $1 + x = 2x \, \rho(x^{-1})$.
Note that $\mu = \mu(\mathcal{X})$ does not depend on $f,g$, and $\alpha$, thus
\[
	\n{r}_{\mathcal{X}^*} \le 1 + \mu.
\]
For any non-smooth Banach space we have $\rho(u) = u$ and $\mu = 1$, and thus the above estimate holds.

We have proven that for any Banach space $\mathcal{X}$ and any $n \ge 0$ one has
\[
	\n{r_n}_{\mathcal{X}^*} \le R = 1 + \mu
\]
and, since $\mathcal{R}_n = r_{n-1} \circ \ldots \circ r_0$, we obtain
\[
	\n{\mathcal{R}_n}_{\mathcal{X}^*} \le \prod_{k=0}^{n-1} \n{r_k}_{\mathcal{X}^*} \le R^n.
\]
Lastly, note that if $\rho(u) \le c_\rho u^q$ for some $q \in (1;2]$, we get $\mu \le \mu_q$, where $\mu_q > 0$ is the solution of the equation $1 + x = 2c_\rho x^{1-q}$, which can be bigger than $1$.
Hence we arrive at
\[
	R = 1 + \mu \le \min\{1 + \mu_q, 2\}.
\]
\end{proof}

\subsubsection*{Proof of Lemma~\ref{lemma_nga!=ga}}
Next, we prove that the NGA is not a weak version of the OGA, but in fact is a new method of constructing reduced bases.
\begin{proof}[Proof of Lemma~\ref{lemma_nga!=ga}]
We first construct a simple setting in which $\n{\mathcal{R}_n(\cdot)}$ and $\dist(\cdot,V_n)$ are equivalent on $\mathcal{F}$ but the equivalence constant is arbitrarily large.

\noindent
Namely, let $\mathcal{X} = \ell_1$ and $\{e_n\}_{n=0}^\infty$ be the canonical basis in $\ell_1$.
Take any $0 < \epsilon < 1/2$ and consider vectors $f_*,f_0,f_1,f_2,\ldots$ given by
\[
	\begin{bmatrix}
		f_* \\ f_0 \\ f_1 \\ f_2 \\ f_3 \\ \vdots
	\end{bmatrix}
	=
	\begin{bmatrix}
		1 & 0 & 0 & 0 & 0 & 0 & \ldots \\
		-\epsilon & 1-\epsilon & 0 & 0 & 0 & 0 & \ldots \\
		-\epsilon/2 & -\epsilon/2 & 1-\epsilon & 0 & 0 & 0 & \ldots \\
		-\epsilon/4 & -\epsilon/4 & -\epsilon/2 & 1-\epsilon & 0 & 0 & \ldots \\
		-\epsilon/8 & -\epsilon/8 & -\epsilon/4 & -\epsilon/2 & 1-\epsilon & 0 & \ldots \\
		\vdots & \vdots & \vdots & \vdots & \vdots & \vdots & \ddots
	\end{bmatrix},
\]
i.e. $f_* = e_0$, and for any $n \ge 0$
\begin{equation}\label{lemma_nga!=ga_f_n}
	f_n = -\frac{\epsilon}{2^n} e_0 - \sum_{k=1}^n \frac{\epsilon}{2^{n+1-k}} e_k + (1-\epsilon) e_{n+1}.
\end{equation}
Note that the vectors $f_*,f_0,f_1,f_2,\ldots$ have norm $1$, and by using the formula $F_x(y) = \sum_{n=0}^\infty \operatorname{sgn}(x_n) \, y_n$ for a norming functional in $\ell_1$ we conclude that the sequence $\{f_n\}_{n=0}^\infty$ is semi-orthogonal in the sense that $F_{f_n}(f_m) = 0$ for any $m > n \ge 0$.

Take any positive sequence $\{a_n\}_{n=0}^\infty$ that is monotonically decreasing to $0$ and a number $0 < \alpha < a_0$, and define set $\mathcal{F} \subset \mathcal{X}$ as
\[
	\mathcal{F} = \{\alpha f_*\} \cup \{a_n f_n\}_{n=0}^\infty,
\]
which is compact since $a_n \to 0$.
We will show that by making $\alpha$ sufficiently small one can get arbitrarily large value for the ratio $\n{\mathcal{R}_n(f_*)}/\dist(f_*,V_n)$.
Denote
\[
	M = M(\alpha) = \min \left\{ n \ge 1 : \alpha \ge \frac{a_n}{(2(1-\epsilon))^n} \right\}.
\]
We will use induction to show that for any $1 \le m \le M$
\begin{equation}\label{lemma_nga!=ga_hypothesis}
	\{g_n\}_{n=0}^{m-1} = \{f_n\}_{n=0}^{m-1}
	\ \text{ and }\ 
	\mathcal{R}_m(f_*) = \alpha \, (1-\epsilon)^m \br{e_0 + \sum_{n=1}^m 2^{n-1} e_n}.
\end{equation}
The base of induction holds since $\alpha < a_0$ and thus the NGA selects
\[
	a_0 f_0 = \mathop{\mathrm{argmax}}_{f \in \mathcal{F}} \n{f}
	\text{ and }
	g_0 = f_0.
\]
Hence $F_{g_0}(f_*) = -\alpha$ and
\[
	\mathcal{R}_1(f_*) =  f_* - F_{g_0}(f_*) \, g_0 = \alpha \, (1-\epsilon) \, (e_0 + e_1).
\]
Assume that the induction holds for some $1 \le m < M$, then since $\{g_n\}_{n=0}^{m-1} = \{f_n\}_{n=0}^{m-1}$ and due to semi-orthogonality of vectors $\{f_n\}_{n=0}^\infty$ we get $\mathcal{R}_m(a_n f_n) = a_n f_n$ (and thus $\n{\mathcal{R}_m(a_n f_n)} = a_n$) for any $n \ge m$.
By the assumption we have
\[
	\mathcal{R}_m(f_*) = \alpha \, (1-\epsilon)^m \br{e_0 + \sum_{n=1}^m 2^{n-1} e_n}
\]
and hence $\n{\mathcal{R}_m(f_*)} = \alpha \, (2(1-\epsilon))^m < a_m$ since $m < M$.
Therefore on $(m+1)$-st iteration the NGA selects
\[
	a_m f_m = \mathop{\mathrm{argmax}}_{f \in \mathcal{F}} \n{\mathcal{R}_m(f)}
	\text{ and }
	g_m = f_m.
\]
Hence $F_{g_m}(\mathcal{R}_m(f_*)) = -\alpha \, (2(1-\epsilon))^m$ and by~\eqref{lemma_nga!=ga_f_n} we get
\begin{align*}
	\mathcal{R}_{m+1}(f_*) 
	&= \mathcal{R}_m(f_*) - F_{g_m}(\mathcal{R}_m(f_*)) \, g_m
	\\
	&= \alpha \, (1-\epsilon)^m \br{e_0 + \sum_{n=1}^m 2^{n-1} e_n} \\
	&\qquad\qquad + \alpha \, (2(1-\epsilon))^m \br{-\frac{\epsilon}{2^m} e_0 - \sum_{n=1}^m \frac{\epsilon}{2^{m+1-n}} e_n + (1-\epsilon) e_{m+1}}
	\\
	&= \alpha \, (1-\epsilon)^m \br{(1-\epsilon) e_0 + \sum_{n=1}^m (1-\epsilon) 2^{n-1} e_n + 2^m (1-\epsilon) e_{m+1}}
	\\
	&= \alpha \, (1-\epsilon)^m \br{(1-\epsilon) e_0 + \sum_{n=1}^{m+1} (1-\epsilon) 2^{n-1} e_n},
\end{align*}
which proves hypothesis~\eqref{lemma_nga!=ga_hypothesis}.
Therefore we obtain
\[
	\frac{\n{\mathcal{R}_m(f_*)}}{\dist(f_*,V_m)}
	\ge \frac{\n{\mathcal{R}_m(f_*)}}{\n{f_*}}
	\ge \br{2(1-\epsilon)}^m
	\text{ for any }
	m \le M,
\]
i.e., the constant $C = C(\mathcal{F,X})$ in the inequality $\dist(\cdot,V_n) \le \n{\mathcal{R}_n(\cdot)} \le C \dist(\cdot,V_n)$ satisfies
\begin{equation}\label{lemma_nga!=ga_M_alpha}
	C \ge \br{2(1-\epsilon)}^{M(\alpha)}.
\end{equation}

Next, we construct an example of such Banach space $\mathcal{X}$ and a compact set $\mathcal{F} \subset \mathcal{X}$ that $\dist(\cdot,V_n)$ and $\n{\mathcal{R}_n(\cdot)}$ are not uniformly equivalent on $\mathcal{F}$.
Consider the space
\[
	\mathcal{X} = \br{\bigoplus_{m\in\mathbb{N}} \ell_1}_1
	= \left\{ \{x_m\}_{m\in\mathbb{N}} : x_m \in \ell_1 \text{ for any } m\in\mathbb{N} \text{ and } \sum_{m\in\mathbb{N}} \n{x_m}_1 < \infty \right\}
\]
with the norm $\n{\{x_m\}_{m\in\mathbb{N}}} = \sum_{m\in\mathbb{N}} \n{x_m}_1$.
For every $m \in \mathbb{N}$ let $\{e_n^m\}_{n=0}^\infty$ be the canonical basis for the $m$-th space $\ell_1$ in the direct sum.
Choose any $0 < \epsilon < 1/2$ and for every $m \in \mathbb{N}$ define vectors $f_*^m,f_0^m,f_1^m,f_2^m,\ldots$ as in~\eqref{lemma_nga!=ga_f_n}, i.e.,
\[
	f_*^m = e_0^m
	\text{ and }
	f_n^m = -\frac{\epsilon}{2^n} e_0^m - \sum_{k=1}^n \frac{\epsilon}{2^{n+1-k}} e_k^m + (1-\epsilon) e_{n+1}^m
	\text{ for any } n \ge 0.
\]
Take two positive sequences $\{a_n\}_{n=0}^\infty$ and $\{\alpha_m\}_{m=0}^\infty$ (monotonically decreasing to $0$) and define the sequence $\{\mathcal{F}_m\}_{m \in \mathbb{N}}$ of disjointly supported subsets of $\mathcal{X}$ as
\[
	\mathcal{F}_m = \left\{ \frac{\alpha_m}{2^m} f_*^m \right\} \cup \left\{ \frac{a_n}{2^m} f_n^m \right\}_{n=0}^\infty.
\]
Finally, denote by $\mathcal{F}$ the union of these sets, i.e.,
\[
	\mathcal{F} = \bigcup_{m\in\mathbb{N}} \mathcal{F}_m \subset \mathcal{X}.
\]
Then $\mathcal{F}$ is a compact subset of $\mathcal{X}$ but $\{\n{\mathcal{R}_n(\cdot)}\}_{n=0}^\infty$ are not uniformly equivalent to $\{\dist(\cdot,V_n)\}_{n=0}^\infty$.
Indeed, assume that there exists a constant $C = C(\mathcal{F,X}) < \infty$ such that for any $f \in \mathcal{F}$ and any $n \ge 0$
\[
	\dist(f,V_n) \le \n{\mathbb{R}_n(f)} \le C \dist(f,V_n).
\]
Then condition~\eqref{lemma_nga!=ga_M_alpha} guarantees that the constant $C$ satisfies the estimate
\[
	C \ge (2(1-\epsilon))^{M(\alpha_m)}
\]
for every $m \in \mathbb{N}$.
However, since $0 < \epsilon < 1/2$, $\{a_n\}_{n=0}^\infty$ are fixed, and $\alpha_m \to 0$ as $m \to \infty$, we conclude that $M(\alpha_m) \to \infty$ as $m \to \infty$.
Therefore $(2(1-\epsilon))^{M(\alpha_m)} \to \infty$ and we obtain a contradiction with the assumption $C < \infty$.
\end{proof}

\subsubsection*{Proof of Theorems~\ref{theorem_ga_direct} and~\ref{theorem_nga_direct}}
Next, we prove the direct estimate for the convergence rate of the greedy algorithms.
Proofs of Theorems~\ref{theorem_ga_direct} and~\ref{theorem_nga_direct} follow the same lines so we state here the proof of Theorem~\ref{theorem_nga_direct} as it is the more technically involved of the two and note that the proof of Theorem~\ref{theorem_ga_direct} can be obtained from the aforementioned proof by making a few minor changes.
Namely, replacing all $\tau_k$ with $\sigma_k$, $\n{\mathcal{R}_k(\cdot)}$ with $\dist(\cdot,V_k)$, and substituting $1$ in place of $R$ provides the desired proof.
\begin{proof}[Proof of Theorem~\ref{theorem_nga_direct}]
Take any $n > 0$ and let $V_{n+1} = \spn\{f_0,\ldots,f_n\}$ be the subspace generated by the greedy algorithm after the $(n+1)$-st iteration.
Define linear functionals $\varphi_0,\ldots,\varphi_n$, where $\varphi_k : \spn\{f_k\} \to \mathbb{R}$ is such that for any $\alpha \in \mathbb{R}$
\[
	\varphi_k(\alpha f_k) = \alpha \tau_k = \alpha \n{\mathcal{R}_k(f_k)}.
\]
Then, since the greedy selection step provides $\n{\mathcal{R}_k(f_k)} = \tau_k$, we have for any $x \in \spn\{f_k\}$
\[
	|\varphi_k(x)| \le \n{\mathcal{R}_k(x)},
\]
and by the Hahn--Banach theorem for each $0 \le k \le n$ there exists a linear extension $\phi_k : V_{n+1} \to \mathbb{R}$ such that $\phi_k(x) = \varphi_k(x)$ for any $x \in \spn\{f_k\}$ and $|\phi_k(x)| \le \n{\mathcal{R}_k(x)}$ for any $x \in V_{n+1}$.
Therefore
\begin{equation}\label{theorem_nga_direct_Phi_ij}
	\phi_k(f_i) = \left\{
	\begin{array}{ll}
		0, & i < k
		\\
		\tau_k, & i = k
	\end{array}
	\right.
	\quad\text{and}\quad
	|\phi_k(f_i)| \le \tau_k, \ i > k.
\end{equation}
Note that estimate~\eqref{estimate_norm_R_n_V} provides
\begin{equation}\label{theorem_nga_direct_phi_k_norm}
	\n{\phi_k}_{V_{n+1}^*} \le \n{\mathcal{R}_k}_{V_{n+1}^*} \le B_k^g \le B_{n+1}^g.
\end{equation}
Let $\{e_k\}_{k=0}^n \in V_{n+1}$ be biorthogonal to $\{\phi_k\}_{k=0}^n$ vectors, i.e., $\phi_k(e_i) = \delta_{ik}$.
Then, since $V_{n+1} = \spn\{f_0,\ldots,f_n\}$, there exist coefficients $\{\beta_{ij}\}_{i,j=0}^n$ such that for each $0 \le i \le n$
\begin{equation}\label{theorem_nga_direct_tau_i_e_i}
	\tau_i e_i = \sum_{j=0}^n \beta_{ij} f_j.
\end{equation}
Hence for any $0 \le k \le n$ we have
\begin{equation}\label{theorem_nga_direct_tau_i_delta_ik}
	\tau_i \delta_{ik} = \tau_i \phi_k(e_i) = \sum_{j=0}^n \beta_{ij} \phi_k(f_j).
\end{equation}
Define matrices $\Sigma = (\tau_i \delta_{ij})_{i,j=0}^n$, $B = (\beta_{ij})_{i,j=0}^n$, and $\Phi = (\phi_j(f_i))_{i,j=0}^n$.
Then system~\eqref{theorem_nga_direct_tau_i_delta_ik} can be rewritten in the matrix form
\[
	\Sigma = B \Phi.
\]
Note that matrix $\Sigma$ is diagonal with $\tau_0,\ldots,\tau_n$ on the diagonal, and from~\eqref{theorem_nga_direct_Phi_ij} we get that $\Phi$ is lower-triangular with $\tau_0,\ldots,\tau_n$ on the diagonal.
Thus $B = \Sigma \Phi^{-1}$ is lower-triangular with $1$ on the diagonal.
We prove by induction that for any $j < i$ the estimate $|\beta_{ij}| \le 2^{i-j-1}$ holds.
Indeed, from~\eqref{theorem_nga_direct_tau_i_delta_ik} and the fact that matrices $B$ and $\Phi$ are lower-triangular we obtain for $k = i-1$
\[
	0 = \tau_i \delta_{i,i-1}
	= \sum_{j=0}^n \beta_{ij} \phi_k(f_j) 
	= \sum_{j=i-1}^i \beta_{ij} \phi_{i-1}(f_j)
	= \beta_{i,i-1} \phi_{i-1}(f_{i-1}) + \beta_{ii} \phi_{i-1}(f_i).
\]
By~\eqref{theorem_nga_direct_Phi_ij} we have $\phi_{i-1}(f_{i-1}) = \tau_{i-1}$ and $|\phi_{i-1}(f_i)| \le \tau_{i-1}$, thus
\[
	|\beta_{i,i-1}| \le |\beta_{ii}| \frac{|\phi_{i-1}(f_i)|}{\tau_{i-1}} \le 1,
\]
which proves the base of induction.
Assume that $|\beta_{ij}| \le 2^{i-j-1}$ for any $j = i-m+1,\ldots,i-1$.
Then for $k = i - m$ we get from~\eqref{theorem_nga_direct_tau_i_delta_ik}
\[
	0 = \tau_i \delta_{i,i-m}
	= \sum_{j=0}^n \beta_{ij} \phi_{i-m}(f_j)
	= \sum_{j=i-m}^i \beta_{ij} \phi_{i-m}(f_j).
\]
Since $\phi_{i-m}(f_{i-m}) = \tau_{i-m}$ and $|\phi_{i-m}(f_j)| \le \tau_{i-m}$ for $j = i-m+1,\ldots,i$, we obtain
\[
	|\beta_{i,i-m}| \le \sum_{j=i-m+1}^i |\beta_{ij}| \frac{|\phi_{i-m}(f_j)|}{\tau_{i-m}}
	\le 1 + \sum_{j=i-m+1}^{i-1} 2^{i-j-1}
	= 2^{m-1},
\]
which proves the hypothesis.
Therefore for any $0 < i < n$ we have
\begin{equation}\label{theorem_nga_direct_sum_b_ij}
	\sum_{j=0}^n |\beta_{ij}| \le 1 + \sum_{j=0}^{i-1} 2^{i-j-1} = 2^i.
\end{equation}
Take any $\epsilon > 0$ and let $X_n$ be an $n$-dimensional subspace that almost attains Kolmogorov $n$-width $d_n(\mathcal{F,X})$, i.e.,
\begin{equation}\label{theorem_nga_direct_X_n}
	\sup_{f \in \mathcal{F}} \dist(f,X_n) \le (1+\epsilon) \, d_n.
\end{equation}
Then there exist such $\{h_j\}_{j=0}^n \in X_n$ that $\n{f_j - h_j} \le (1 + \epsilon) \, d_n$ for any $0 \le j \le n$.
Denote $Y = \spn\{V_{n+1},X_n\}$ and by~\eqref{e_norm} introduce a Euclidean norm $\n{\cdot}_e$ on $Y$ such that for any $y \in Y$
\[
	\n{y} \le \n{y}_e \le \gamma_{2n+1} \n{y}.
\]
Let $P$ be the orthogonal projector from $Y$ onto $V_{n+1}$ in $\n{\cdot}_e$-norm and denote $W = P(X_n) \subset V_{n+1}$.
Since $\dim W \le n < n+1 = \dim V_{n+1}$, there exists a linear functional $\psi : V_{n+1} \to \mathbb{R}$ such that $\n{\psi}_{V_{n+1}^*} = 1$ and $\ker \psi = W$ (see, e.g.,~\cite[Proposition~2.8]{faetal2013}).
Then for each $0 \le j \le n$ we have
\begin{align*}
	|\psi(f_j)| 
	&= |\psi(f_j - P(h_j))|
	= |\psi(P(f_j - h_j))|
	\le \n{P(f_j - h_j)}
	\\
	&\le \n{P(f_j - h_j)}_e
	\le \n{f_j - h_j}_e
	\le \gamma_{2n+1} \n{f_j - h_j}
	\le \gamma_{2n+1} (1 + \epsilon) \, d_n,
\end{align*}
and hence by~\eqref{theorem_nga_direct_tau_i_e_i} and~\eqref{theorem_nga_direct_sum_b_ij} for any $0 \le i \le n$ we get
\[
	|\psi(\tau_i e_i)| = \left| \psi \br{\sum_{j=0}^n \beta_{ij} f_j} \right|
	\le \sum_{j=0}^n |\beta_{ij}| |\psi(f_j)|
	\le 2^i \gamma_{2n+1} (1 + \epsilon) \, d_n.
\]
Finally, since $\psi = \sum_{i=0}^n \psi(e_i) \phi_i$ and $\n{\psi}_{V_{n+1}^*} = 1$, we deduce
\[
	1 = \n{\psi}_{V_{n+1}^*} = \n{\sum_{i=0}^n \psi(e_i) \phi_i}_{V_{n+1}^*}
	\le \sum_{i=0}^n |\psi(e_i)| \n{\phi_i}_{V_{n+1}^*},
\]
and using estimates~\eqref{theorem_nga_direct_phi_k_norm} and $\sigma_n \le \sigma_i \le \tau_i$ for any $i \le n$ (see~\eqref{estimate_sigma_tau}) we arrive at
\begin{align*}
	\sigma_n 
	\le \sum_{i=0}^n \tau_i |\psi(e_i)| \n{\phi_i}_{V_{n+1}^*}
	&\le \sum_{i=0}^n 2^i \gamma_{2n+1} (1 + \epsilon) \, d_n B_{n+1}^g
    \\
	&\le B_{n+1}^g \, 2^{n+1} \gamma_{2n+1} (1 + \epsilon) \, d_n.
\end{align*}
Taking infimum over all $\epsilon > 0$ completes the proof.
\end{proof}

\subsubsection*{Proof of Theorem~\ref{theorem_nga_indirect}}
For the proof of the delayed estimate of the convergence rate we will use the following technical lemma, which is based on Hadamard's inequality.
\begin{lemma}[{\cite[Lemma~2.1]{depewo2013}}]\label{lemma_proj}
Let $A = (a_{ij})_{i,j=0}^{n-1}$ be a $n \times n$ lower triangular matrix with rows $a_0,\ldots,a_{n-1}$, $W_m$ be any $m$-dimensional subspace of $\mathbb{R}^n$, and $P_m$ be the orthogonal projection from $\mathbb{R}^n$ onto $W_m$.
Then
\[
	\prod_{i=0}^{n-1} a_{ii}^2 \le \br{\frac{1}{m} \sum_{i=0}^{n-1} \n{P_m a_i}_2^2}^m \br{\frac{1}{n-m} \sum_{i=0}^{n-1} \n{a_i-P_m a_i}_2^2}^{n-m}, 
\]
where $\n{\cdot}_2$ is the Euclidean norm of a vector in $\mathbb{R}^n$.
\end{lemma}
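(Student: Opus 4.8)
The plan is to derive the bound from Hadamard's inequality, but applied to the \emph{columns} of a suitably rotated matrix rather than to the rows of $A$ itself. Since $A$ is lower triangular, $\det A = \prod_{i=0}^{n-1} a_{ii}$, so it suffices to estimate $(\det A)^2$. First I would pick an orthonormal basis $u_0,\dots,u_{n-1}$ of $\mathbb{R}^n$ adapted to the orthogonal splitting $\mathbb{R}^n = W_m \oplus W_m^{\perp}$, with $u_0,\dots,u_{m-1}\in W_m$ and $u_m,\dots,u_{n-1}\in W_m^{\perp}$, and let $U$ be the orthogonal matrix whose columns are these vectors. Then $\det(AU)=\pm\det A$, and for each row $a_i$ the first $m$ entries of $a_iU$ are the coordinates of $P_m a_i$ in $u_0,\dots,u_{m-1}$ while the last $n-m$ entries are the coordinates of $a_i-P_m a_i$; thus $AU=[\,\tilde B \mid \tilde C\,]$ with $\tilde B\in\mathbb{R}^{n\times m}$, $\tilde C\in\mathbb{R}^{n\times(n-m)}$, and the $i$-th rows of $\tilde B$ and $\tilde C$ have Euclidean norms $\n{P_m a_i}_2$ and $\n{a_i-P_m a_i}_2$, since orthogonal maps preserve Euclidean norms.

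Next I would apply Hadamard's inequality to the $n\times n$ matrix $AU$ in columnwise form, $|\det(AU)|\le\prod_{j=0}^{n-1}c_j$, where $c_0,\dots,c_{n-1}$ are the Euclidean norms of the columns of $AU$, split as the $m$ columns of $\tilde B$ and the $n-m$ columns of $\tilde C$. Squaring gives
\[
	\prod_{i=0}^{n-1}a_{ii}^2=\br{\det(AU)}^2\le\br{\prod_{j=0}^{m-1}c_j^2}\br{\prod_{j=m}^{n-1}c_j^2}.
\]
The crucial observation is that $\sum_{j=0}^{m-1}c_j^2$ is the squared Frobenius norm of $\tilde B$, so it equals $\sum_{i=0}^{n-1}\n{P_m a_i}_2^2$, and similarly $\sum_{j=m}^{n-1}c_j^2=\sum_{i=0}^{n-1}\n{a_i-P_m a_i}_2^2$. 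Applying the AM--GM inequality separately to the $m$ numbers $c_0^2,\dots,c_{m-1}^2$ and to the $n-m$ numbers $c_m^2,\dots,c_{n-1}^2$ then yields
\[
	\prod_{j=0}^{m-1}c_j^2\le\br{\frac1m\sum_{i=0}^{n-1}\n{P_m a_i}_2^2}^{m},\qquad\prod_{j=m}^{n-1}c_j^2\le\br{\frac1{n-m}\sum_{i=0}^{n-1}\n{a_i-P_m a_i}_2^2}^{n-m},
\]
and combining the three displays is exactly the claimed inequality.

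The main obstacle — and the reason the naive estimate $\prod_i a_{ii}^2\le\prod_i\n{a_i}_2^2=\prod_i\br{\n{P_m a_i}_2^2+\n{a_i-P_m a_i}_2^2}$ is too weak to produce the asymmetric exponents $m$ and $n-m$ — is recognizing that one must rotate $W_m$ and $W_m^{\perp}$ into coordinate subspaces \emph{before} invoking Hadamard. Once that is done the $n$ columns fall into an $m$-block and an $(n-m)$-block on which two independent AM--GM estimates give the desired powers; the remaining work is purely bookkeeping, namely checking that the orthogonal change of coordinates preserves the relevant norms and that each block's Frobenius norm can be summed equivalently over rows or over columns.
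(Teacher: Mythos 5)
Your argument is correct and complete, and it is essentially the same proof the paper relies on: the lemma is quoted from \cite[Lemma~2.1]{depewo2013} and described there (and in the paper) as a consequence of Hadamard's inequality, obtained exactly as you do by rotating to a basis adapted to $W_m \oplus W_m^{\perp}$, applying columnwise Hadamard to $AU$, identifying the blockwise column-norm sums with $\sum_i \n{P_m a_i}_2^2$ and $\sum_i \n{a_i - P_m a_i}_2^2$ via the Frobenius norm, and finishing with AM--GM on each block.
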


\begin{proof}[Proof of Theorem~\ref{theorem_nga_indirect}]
Take any $n > m > 0$ and any $\epsilon > 0$.
Let $V_n = \spn\{f_0,\ldots,f_{n-1}\}$ be the subspace generated by the greedy algorithm after the $n$-th iteration and $X_m$ be an $m$-dimensional subspace that almost attains Kolmogorov $m$-width $d_m(\mathcal{F,X})$, i.e.,
\begin{equation}\label{theorem_nga_indirect_X_m}
	\sup_{f \in \mathcal{F}} \dist(f,X_m) \le (1+\epsilon) \, d_m.
\end{equation}
Denote $Y = \spn\{V_n,X_m\}$ and introduce a Euclidean norm $\n{\cdot}_e$ on $Y$ (see~\eqref{e_norm}) such that for any $y \in Y$
\[
	\n{y} \le \n{y}_e \le \gamma_{n+m} \n{y}.
\]
Let $\{e_j\}_{j=0}^{n-1}$ be the orthonormal system in $\n{\cdot}_e$-norm obtained from $\{f_j\}_{j=0}^{n-1}$ by the Gram--Schmidt process and define the matrix $A = (a_{ij})_{i,j=0}^{n-1}$ given by $a_{ij} = e_j(f_i)$.
Then, since $\mathcal{R}_i(V_i) = 0$ and thus $\n{\mathcal{R}_i(f_i)} = \n{\mathcal{R}_i(f_i-\proj(f_i,V_i))}$, we obtain
\begin{align}
	\nonumber
	a_{ij} &= 0 \text{ for any } j > i,
	\\
	\label{theorem_nga_indirect_a_ii>sigma_i}
	a_{ii} &= e_i(f_i) = \dist_e(f_i,V_i) \ge \dist(f_i,V_i) 
	\ge \frac{\n{\mathcal{R}_i(f_i)}}{\n{\mathcal{R}_i}_{V_{i+1}^*}} 
	= \frac{\tau_i}{\n{\mathcal{R}_i}_{V_{i+1}^*}},
\end{align}
where $\dist_e(\cdot,\cdot)$ denotes the distance in $\n{\cdot}_e$-norm.
\\
Consider the subspace $W = \{e_0(h),\ldots,e_{n-1}(h)\} \subset \mathbb{R}^n$, $h \in X_m$ and take any $m$-dimensional subspace $W_m \subset \mathbb{R}^n$ containing $W$, i.e.,
\[
	W \subset W_m \subset \mathbb{R}^n
	\text{ and }
	\dim W \le \dim W_m = m < n.
\]
Denote by $P$ and $P_m$ the orthogonal projectors from $\mathbb{R}^n$ onto $W$ and $W_m$ respectively.
Let $a_0,\ldots,a_{n-1}$ denote the rows of the matrix $A$, i.e., $a_i = (e_0(f_i),\ldots,e_{n-1}(f_i))$.
Then, since $\mathcal{F}$ is contained in the unit ball of $\mathcal{X}$, we get
\begin{align*}
	\n{P_m a_i}_2^2
	&\le \n{a_i}_2^2
	= \sum_{j=1}^i a_{ij}^2
	= \sum_{j=1}^i (e_j(f_i))^2
	= \n{f_i}_e^2
	\le \gamma_{n+m}^2 \n{f_i}^2
	\le \gamma_{n+m}^2.
\end{align*}
By~\eqref{theorem_nga_indirect_X_m} there exist $\{h_i\}_{i=0}^{n-1} \in X_m$ such that $\n{f_i - h_i} \le (1+\epsilon) \, d_m$.
Denote $w_i = (e_0(h_i),\ldots,e_{n-1}(h_i)) \in W$, then
\begin{multline*}
	\n{a_i - P_m a_i}_2^2
	\le \n{a_i - P a_i}_2^2
	\le \n{a_i - w_i}_2^2
    \\
	= \n{(e_0(f_i),\ldots,e_{n-1}(f_i)) - (e_0(h_i),\ldots,e_{n-1}(h_i))}_2^2
    = \sum_{j=0}^{n-1}  (e_j(f_i-h_i))^2
	\\
	\le \n{f_i-h_i}_e^2
	\le \gamma_{n+m}^2 \n{f_i-h_i}^2
	\le \gamma_{n+m}^2 (1+\epsilon)^2 \, d_m^2.
\end{multline*}
Hence from the inequality $\sigma_n \le \tau_i$ for any $i \le n$, estimates~\eqref{estimate_norm_R_n_V} and~\eqref{theorem_nga_indirect_a_ii>sigma_i}, and Lemma~\ref{lemma_proj} we obtain
\begin{align*}
	\frac{\sigma_n^{2n}}{\min\{R^{n(n-1)},(C_g+1)^{2n}\}}
	&\le \prod_{i=0}^{n-1} \frac{\tau_i^2}{\n{\mathcal{R}_i}_{V_n^*}^2}
	= \prod_{i=0}^{n-1} a_{ii}^2 \\
	& \le \br{\frac{1}{m} \sum_{i=0}^{n-1} \n{P_m a_i}_2^2}^m \br{\frac{1}{n-m} \sum_{i=0}^{n-1} \n{a_i-P_m a_i}_2}^{n-m}
	\\
	&\le \br{\frac{n}{m} \gamma_{n+m}^2}^m \br{\frac{n}{n-m} \gamma_{n+m}^2 (1+\epsilon)^2 d_m^2}^{n-m}
	\\
	&= \frac{n^n}{m^m (n-m)^{n-m}} \gamma_{n+m}^{2n} \br{(1+\epsilon) \, d_m}^{2(n-m)}
	\\
	&\le 2^n \gamma_{n+m}^{2n} \br{(1+\epsilon) \, d_m}^{2(n-m)}.
\end{align*}
Since this estimate holds for any $\epsilon > 0$ and $0 < m < n$, we get
\begin{align*}
	\sigma_n 
	&\le \sqrt{2} \min\{R^{(n-1)/2},C_g+1\} \min_{0<m<n} \gamma_{n+m} \, d_m^{1-m/n}
	\\
	&\le \sqrt{2} B_{n/2}^g \min_{0<m<n} \gamma_{n+m} \, d_m^{1-m/n}.
\end{align*}
\end{proof}

\subsubsection*{Proof of Theorem~\ref{theorem_nga_special}}
Finally, we prove the estimates on convergence rates of the NGA in special cases.
\begin{proof}[Proof of Theorem~\ref{theorem_nga_special}]
\begin{enumerate}
	\item Let $d_n(\mathcal{F,X}) \le A \exp(-a n^\alpha)$ with some constants $0 < \alpha,a,A < \infty$.
	Then Theorem~\ref{theorem_nga_indirect} provides for $n = 2m$
	\[
		\sigma_{2m} 
		\le \sqrt{2} B_m^g \gamma_{3m} \sqrt{d_m}
		\le \sqrt{2A} B_m^g \gamma_{3m} \exp\br{-\frac{a}{2} m^\alpha},
	\]
	which proves the desired estimate.
	\item Proof of this part essentially uses the proof of Theorem~\ref{theorem_nga_indirect} with additional rigorous technical estimates, and repeats to the letter the proof of the corresponding result for the OGA presented in~\cite[Theorem~2.3]{wo2015} and~\cite[Theorem~3.1]{ng2018}.
	Since the proof of Theorem~\ref{theorem_nga_indirect} is provided in this paper, we leave the meticulous technical estimates to the interested reader.
\end{enumerate}
\end{proof}

\bibliographystyle{abbrv}      
\bibliography{NGA}   
\end{document}